\numberwithin{thmcounter}{section}
\newaliascnt{thmauto}{thmcounter}
\newaliascnt{Defauto}{thmcounter}
\newaliascnt{exauto}{thmcounter}
\newaliascnt{quauto}{thmcounter}
\newaliascnt{lemauto}{thmcounter}
\newaliascnt{propauto}{thmcounter}
\newaliascnt{corauto}{thmcounter}
\newaliascnt{remauto}{thmcounter}
\newaliascnt{convauto}{thmcounter}
\newtheorem*{ThmA'}{Theorem A'}
\newtheorem*{ThmB'}{Theorem B'}
\newtheorem*{ThmC'}{Theorem C'}
\newtheorem{theorem}[thmauto]{Theorem}
\newtheorem{lemma}[lemauto]{Lemma}
\newtheorem{proposition}[propauto]{Proposition}
\theoremstyle{definition}
\newtheorem{remark}[remauto]{Remark}
\title{Counting geodesics on prime-order $k$-differentials}
\author{Juliet Aygun}
\address{Department of Mathematics, Cornell University, USA}
\email{ja742@cornell.edu}
\date{\today}
\begin{document}

\maketitle
\begin{abstract}
We determine weak asymptotics of counting functions on generic surfaces in a component of a stratum of $k$-differentials when $k$ is prime and genus is greater than $2$. In order to do so, we classify the $GL^+(2,\mathbb{R})$-orbit closure of holonomy covers of components and apply \cite[Theorem 2.12]{EMM15} generalized to translation surfaces. We show that the $GL^+(2,\mathbb{R})$-orbit closure of these holonomy covers is generically a component of a stratum of translation surfaces or a hyperelliptic locus therein. 
\end{abstract}

\section{Introduction}
Suppose $X$ is a compact Riemann surface of genus $g$. A $k$-differential $\xi$ on $X$ is a section of the $k$-th power of the canonical line bundle on $X$. Locally, $\xi$ is of the form $f(z)(dz)^k$ where $f(z)$ is a meromorphic function defined on a local coordinate $z$ on $X$. The bundle $\Omega^k\mathcal{M}_g$ parameterizes non-zero $k$-differentials on Riemann surfaces in $\mathcal{M}_g$. Denote the set of singularities of $\xi$ on $X$ by $\Sigma(\xi)$. Gauss-Bonnet theorem requires that the orders of each point in $\Sigma(\xi)$ sum up to $k(2g-2)$. Consider $\mu = (m_1,...,m_n)$ to be an integral partition of $k(2g-2)$ with each entry greater than $-k$ (i.e. no higher order poles). Let $\Omega^k\mathcal{M}_g(\mu)$ be the stratum of $\Omega^k\mathcal{M}_g$ whose $k$-differentials have singularities of orders corresponding to the entries of $\mu$. Positive entries of $\mu$ are the orders of the zeros and negative entries are the negative orders of the poles. When $k>2$, we do not allow regular marked points. If $\xi$ is not globally the $d$-th power of a $(k/d)$-differential, we call $\xi$ \textit{primitive}. Let $\Omega^k\mathcal{M}_g(\mu)^{\text{prim}}$ be the locus of primitive $k$-differentials in $\Omega^k\mathcal{M}_g(\mu)$. We single out the case $k=1$ by denoting the Hodge bundle of abelian differentials by $\mathcal{H}_g$ and a stratum by $\mathcal{H}_g(\mu)$. We will often use $\omega$ as notation for an abelian differential. The pair $(X, \xi)$ is called a $(1/k)$\textit{-translation surface}, and more exceptionally, a \textit{translation surface} when $k=1$ and a \textit{half-translation surface} when $k=2.$ Each $(1/k)$-translation surface $(X,\xi)$ is associated to a collection of polygons in $\mathbb{C}$ with sides identified by translation with possible rotation by $\frac{2\pi}{k}j$, $j \in \mathbb{Z}/k$, unique up to cut-and-paste. By pulling back the flat metric on $\mathbb{C}$, a $k$-differential induces a flat cone metric on $X$ with a cone angle of $2\pi(1 + \frac{m}{k})$ at a zero of order $m$ or $2\pi(1 - \frac{m}{k})$ at a pole of order $m$. The area of the flat metric on $X$ is denoted by $\text{Area}(X,\xi)$.

A \textit{cylinder core curve} on a $(1/k)$-translation surface is a closed geodesic disjoint from singularities. The union of cylinder core curves in the same isotopy class rel singularities forms what appears to be a `thickened geodesic,' or when $k \in \{1,2\}$, a Euclidean cylinder, hence the name `cylinder core curve.' For any $(1/k)$-translation surface, these `thickened geodesics' will be called \textit{cylinders}. Geodesics between two not necessarily distinct singularities which have no singularity in their interiors are called \textit{saddle connections}. A new and complicating feature for when $k>2$ is that cylinders and saddle connections often self-intersect because of non-trivial holonomy.

A natural question given a $(1/k)$-translation surfaces is how many cylinders or saddle connections of length less than $L$ does it have? The canonical length element of a curve $\gamma$ on a $(1/k)$-translation surface $(X,\xi)$ is the integral of some branch $|\sqrt[k]{\xi}|$ over $\gamma$. Functions which input a $(1/k)$-translation surface $M$ and length $L$ and output the number of cylinders or saddle connections of length less than $L$ are called \textit{counting functions}. Let $N_{cyl}(M,L)$ and $N_{sc}(M,L)$ denote these respective counting functions for $M \in \Omega^k\mathcal{M}_g(\mu)$. 

It is of popular interest to compute the asymptotics of $N_{cyl}$ and $N_{sc}$ on flat surfaces, long dating back to Masur in the 1980s and continued by Eskin, Mirzakhani, and Zorich in the 1990s and 2000s. Eskin-Masur \cite{EM01} found that for almost every translation surface, these exact asymptotics are $\pi L^2$ times a constant called a \textit{Siegel-Veech constant} (re-normalized by the area of the surface). The Siegel-Veech constants for generic translation surfaces can be computed using techniques in \cite{EMZ03}. Athreya-Eskin-Zorich \cite{AEZ16} computed Siegel-Veech constants for generic genus zero half-translation surfaces, and Goujard \cite{Gou15} extended their results to positive genus half-translation surfaces. In general, it is unknown if these exact asymptotics exist for surfaces when $k>2$. Following the notation of Athreya-Eskin-Zorich \cite{AEZ16}, we will take $N_*(M,L) `` \sim ” c · L^2$ to mean that $$\lim_{L \to \infty}\frac{1}{L}\int^{L}_{0} N_*(M,e^t)e^{-2t}dt = c.$$ These limits pertaining to Ces\`aro averages are referred to as \textit{weak asymptotics}. Weak asymptotics do exist for every $(1/k)$-translation surfaces as we will see below.

In this paper, we initiate the study of the asymptopics of counting functions on positive genus $(1/k)$-translation surfaces when $k>2$. We determine the weak asymptotics of generic surfaces when $k$ is prime and $g  >2$. Components of strata of prime-order $k$-differentials consist either of global $k$-th powers of abelian differentials or primitive $k$-differentials. In the former case, it is immediate that the asymptotics for any surface $(X,\xi)$ are the ones associated to the translation surface $(X,\sqrt[k]{\xi})$ (using any branch of $\sqrt[k]{\xi}$) by definition of the length element. Because we can already compute Siegel-Veech constants for generic translation surfaces in any stratum, we focus on the latter case.

Every $(1/k)$-translation surface ``unfolds'' to a canonical translation surface called a holonomy cover (see Section \ref{subsection_cover}). The measure on $\Omega^k\mathcal{M}_g(\mu)$ can be thought of as Lebesgue measure on local cohomological coordinates on the locus of holonomy covers (see \cite{Ngu22}). A surface $(X,\xi) \in \Omega^k\mathcal{M}_g$ is \textit{hyperelliptic} if $X$ is hyperelliptic and $\xi$ is a $(-1)^k$-eigenform of the hyperelliptic involution. In this paper, we also require the set of all (regular) marked points on $X$ to be invariant under the hyperelliptic involution. A connected component of a stratum is called a \textit{hyperelliptic component} if every $(1/k)$-translation surface inside is hyperelliptic.

\begin{theorem} \label{theorem_main}
Suppose $k>2$ is prime and $g>2$. Let $K$ be a component of $\Omega^k\mathcal{M}_{g}(\mu)^\emph{prim}$. There exists constants $\hat{c}_{cyl}(K)$ and $\hat{c}_{sc}(K)$ such that for almost every $M \in K$, $$N_{cyl}(M,L) `` \sim ” \frac{\hat{c}_{cyl} \cdot \pi L^2}{k^2 \cdot \emph{Area}(M)} \hspace{35pt} N_{sc}(M,L) `` \sim ” \frac{\hat{c}_{sc}\cdot \pi L^2}{k^2 \cdot \emph{Area}(M)}.$$ Let $\mathcal{N}$ be the locus of holonomy covers of surfaces in $K$, and let $\hat{K}$ be the connected component of $\mathcal{H}_{\hat{g}}(\hat{\mu})$ containing $\mathcal{N}$. Then $\hat{c}_{cyl}$ and $\hat{c}_{sc}$ are those Siegel-Veech constants associated to 
\begin{enumerate}
    \item $\hat{K}$ when $K$ is a non-hyperelliptic component or
    \item the hyperelliptic locus in $\hat{K}$ containing $\mathcal{N}$ when $K$ is a hyperelliptic component.
\end{enumerate} 
\end{theorem}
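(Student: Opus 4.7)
The plan is to lift the counting problem on a surface $(X,\xi)\in K$ to its translation-surface holonomy cover $\pi\colon(\hat X,\hat\omega)\to(X,\xi)$, apply the translation-surface generalization of \cite[Theorem 2.12]{EMM15} on the locus $\mathcal{N}$, and push the resulting Siegel--Veech asymptotics back down to $K$.

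\textbf{Step 1: cover-to-base dictionary.} Since the canonical length element on $(X,\xi)$ is defined via a branch of $\sqrt[k]{\xi}$, the relation $\pi^{*}\xi=\hat\omega^{k}$ makes lengths on $\hat X$ agree with lengths on $X$ under $\pi$, and $\text{Area}(\hat X,\hat\omega)=k\cdot\text{Area}(X,\xi)$. The map $\pi$ has degree $k$ with deck group $\mathbb{Z}/k$ acting transitively on fibers; since $k$ is prime and $\xi$ is primitive, each cylinder core curve or saddle connection on the base either lifts to $k$ disjoint parallel objects of the same length or to a single object covering the base object $k$-to-one, according to its $\mathbb{Z}/k$-monodromy. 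These possibilities yield an explicit linear relation between $N_{*}(M,L)$ and the corresponding count on $\pi^{-1}(M)$ in which the factor of $k^{2}$ appearing in the statement is produced.

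\textbf{Step 2: orbit-closure classification.} By Eskin--Mirzakhani--Mohammadi, $\overline{GL^{+}(2,\mathbb{R})\cdot\mathcal{N}}$ is an affine invariant submanifold $\mathcal{M}$ of the ambient stratum $\mathcal{H}_{\hat g}(\hat\mu)$, and containing $\mathcal{N}\subseteq\hat K$ it lies inside $\hat K$. The goal is to prove $\mathcal{M}=\hat K$ when $K$ is non-hyperelliptic, and otherwise $\mathcal{M}$ is the hyperelliptic locus of $\hat K$ containing $\mathcal{N}$. Following Nguyen, tangent directions to $\mathcal{N}$ at a holonomy cover are read off from the $\mathbb{Z}/k$-equivariant decomposition of $H^{1}(\hat X,\hat\Sigma;\mathbb{C})$; because $k$ is prime, this decomposition has exactly two isotypic pieces (the trivial piece coming from $H^{1}(X,\Sigma;\mathbb{C})$ and the sum of the non-trivial characters), and $\mathcal{N}$ is precisely cut out by constraints on the trivial piece. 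Any intermediate affine invariant submanifold $\mathcal{M}'\subsetneq\hat K$ containing $\mathcal{N}$ would impose further $GL^{+}(2,\mathbb{R})$- and $\mathbb{Z}/k$-equivariant linear conditions on relative periods; representation-theoretic constraints from primality of $k$ rule all such conditions out, except those produced by the lift of a hyperelliptic involution, which does cut out a genuine affine invariant subvariety and accounts for the hyperelliptic alternative.

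\textbf{Step 3: applying EMM and assembling.} With $\overline{GL^{+}(2,\mathbb{R})\cdot\mathcal{N}}$ identified, the translation-surface generalization of \cite[Theorem 2.12]{EMM15} applied to the affine measure on this orbit closure yields, for almost every $\hat M$ in the support, weak asymptotics of the form $\hat c_{*}(\mathcal{M})\cdot\pi L^{2}/\text{Area}(\hat M)$, where $\hat c_{cyl}(\mathcal{M})$ and $\hat c_{sc}(\mathcal{M})$ are the Siegel--Veech constants attached to $\mathcal{M}$. Substituting into the Step 1 dictionary and setting $\hat c_{*}=\hat c_{*}(\mathcal{M})$ converts these into the claimed weak asymptotics for $N_{cyl}(M,L)$ and $N_{sc}(M,L)$, with the factor of $k^{2}\cdot\text{Area}(M)$ appearing in the denominator exactly as stated.

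\textbf{Main obstacle.} Step 2 is the core difficulty: ruling out every intermediate affine invariant submanifold between $\mathcal{N}$ and $\hat K$ requires combining the Filip--Wright structure theory for affine invariant submanifolds with the $\mathbb{Z}/k$-representation theory on relative cohomology. Primality of $k$ is essential, since the only subgroups of $\mathbb{Z}/k$ are trivial and hence only one non-trivial isotypic piece can appear; without primality, intermediate subcovers produce additional candidate orbit closures that would each have to be ruled out by hand, and the hypothesis $g>2$ is needed to exclude the low-genus hyperelliptic coincidences that otherwise enlarge the list of possibilities.
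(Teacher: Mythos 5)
Your Steps 1 and 3 are essentially the paper's reduction: the counting relations $N_*(\hat M,L)=k\,N_*(M,L)$ and $\mathrm{Area}(\hat M)=k\,\mathrm{Area}(M)$ combined with the Eskin--Mirzakhani--Mohammadi weak asymptotics on the cover give the $k^2$ factor. (Two small corrections there: the dichotomy you state is spurious --- a saddle connection is an arc and a regular closed geodesic has trivial rotational holonomy, so every such object lifts to exactly $k$ isometric copies; if the ``$k$-to-one'' case could occur the clean linear relation, hence the clean $k^2$, would fail. Also the relation for saddle connections requires marking the preimages of poles of order $k-1$ on the cover, which the paper flags explicitly.)

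The genuine gap is Step 2, which is not a sketch of a proof but a restatement of the problem, and the mechanism you propose is incorrect. Over $\mathbb{C}$ the tangent space to $\mathcal{N}$ is the single eigenspace $H^1(\hat X,\Sigma(\hat\omega);\mathbb{C})_{\zeta}$, not the complement of the trivial isotypic piece, so $\mathcal{N}$ is cut out by killing all the other eigenspaces, including the nontrivial ones $\zeta^2,\dots,\zeta^{k-1}$; and no ``representation-theoretic constraint from primality'' rules out intermediate affine invariant subvarieties: the paper points out (Section 2.4) that the $\mathbb{Q}(\zeta)$-linear equations cutting out $\mathcal{N}$ can be scaled and summed into $(\bar{\mathbb{Q}}\cap\mathbb{R})$-linear equations, so field-of-definition/linear-algebra arguments of the kind you invoke cannot by themselves exclude proper orbit closures containing $\mathcal{N}$. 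What the paper actually does is: (i) produce a surface in $K$ with a Euclidean cylinder by degenerating to the boundary of the multi-scale compactification and bubbling a handle (Lemmas \ref{lemma_degeneratedgraph}--\ref{lemma_Marithmetic}) --- this is where $g>2$ is used, not to avoid ``hyperelliptic coincidences''; (ii) use the Cylinder Deformation Theorem to isolate a lone $\mathcal{M}$-parallel cylinder and conclude $\mathcal{F}(\mathcal{M})$ is arithmetic; (iii) Galois-conjugate the $\zeta$-eigenspace inside the $\mathbb{Q}$-defined bundle $p(T\mathcal{F}(\mathcal{M}))$ to capture every $H^1(\hat X;\mathbb{C})_{\zeta^\ell}$ (Lemma \ref{lemma_eigenspacefullrank}); (iv) a dimension count gives high rank, so Apisa--Wright forces $\mathcal{F}(\mathcal{M})$ to be a component or a locus of holonomy double covers of quadratic differentials; (v) the double-cover alternative is eliminated in the non-hyperelliptic case (Lemmas \ref{lemma_minimalcover}--\ref{lemma_notdoublecovers}) and, in the hyperelliptic case, identified with the \emph{full} hyperelliptic locus using minimality, uniqueness of the degree-two quotient, and the Apisa--Wright classification of periodic points to handle marked points. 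None of these steps is present or replaceable by the argument you outline, and your proposal also gives no reason why the orbit closure fills the entire hyperelliptic locus rather than a proper subvariety of it. Finally, you also need the statement that almost every surface of $\mathcal{N}$ (with respect to the measure coming from $K$) has orbit closure equal to the smallest affine invariant subvariety containing $\mathcal{N}$ (Lemma \ref{lemma_NdenseinM}, via countability of proper affine invariant subvarieties), which is what lets you pass from ``constants attached to the orbit closure'' to a single pair of constants for almost every $M\in K$.
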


Recall Siegel-Veech constants for components of a stratum are computable using \cite{EMZ03}. One can also compute them for hyperelliptic loci therein using \cite{AEZ16} and \cite[Section 8]{Api21}. We discuss this in Section \ref{theorems}. The first part of Theorem \ref{theorem_main} follows quickly from Theorem \ref{weak} and Lemma \ref{lemma_NdenseinM}, so most of the work in this paper goes into finding $\hat{c}_{cyl}$ and $\hat{c}_{sc}$.

Veech \cite{Vee89} and Eskin-Marklof-Witte-Morris \cite{EMWM06} found exact asymptotics for counting functions of billiard trajectories on certain rational isosceles triangles (which correspond to cylinders on genus zero $(1/k)$-translation surfaces). Many followed and have found exact asymptotics for other types of rational billiard tables. Apisa \cite{Api21} found weak asymptotics for the remaining unknown cases of right and isosceles triangles. 

Translation surfaces and half-translation surfaces have a nice $GL^+(2,\mathbb{R})$-action which acts as linear transformations of their polygonal representations. The $GL^+(2,\mathbb{R})$-orbit closure of almost every translation surface is a component of its ambient stratum. Though the motivation for Theorem \ref{theorem_holonomyorbit} follows from Theorem \ref{weak}, it is independently an interesting result.

\begin{theorem} \label{theorem_holonomyorbit}
Suppose that $k>2$ is prime and $g>2$, and let $K$ be a component of $\Omega^k\mathcal{M}_g (\mu)^{\emph{prim}}$. Almost every $(X,\xi) \in K$ unfolds to a surface $(\hat{X},\hat{\omega}) \in \mathcal{H}_{\hat{g}}(\hat{\mu})$ whose $GL^+(2,\mathbb{R})$-orbit closure is
    \begin{enumerate}
        \item the ambient connected component of $\mathcal{H}_{\hat{g}}(\hat{\mu})$ when $K$ is non-hyperelliptic or
        \item a full hyperelliptic locus when $K$ is hyperelliptic. In particular, it is branched double covers of the stratum
        \begin{enumerate}
                \item $\Omega^2 \mathcal{M}_0(2m_1+k-2, 2m_2 + k -2, -1^{2gk})$ when $K$ is the hyperelliptic component of $\Omega^k \mathcal{M}_g(2m_1, 2m_2)$,
                \item $\Omega^2 \mathcal{M}_0(2m+k-2, 2\ell + 2k -2, -1^{2gk+k})$ when $K$ is the hyperelliptic component of $\Omega^k \mathcal{M}_g(2m, \ell, \ell)$,
                \item and $\Omega^2 \mathcal{M}_0(2\ell_1+2k-2, 2\ell_2 + 2k -2, -1^{2gk+2k})$ when $K$ is the hyperelliptic component of $\Omega^k \mathcal{M}_g(\ell_1, \ell_1, \ell_2, \ell_2)$.
            \end{enumerate} 
    \end{enumerate}
\end{theorem}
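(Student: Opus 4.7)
The plan is to study, for a generic $(X,\xi) \in K$, the $GL^+(2,\mathbb{R})$-orbit closure $\mathcal{M}$ of its holonomy cover $(\hat{X},\hat{\omega}) \in \mathcal{H}_{\hat{g}}(\hat{\mu})$, which is an affine invariant submanifold by \cite{EMM15}. The argument breaks into three steps: (a) show that $\mathcal{M}$ contains the locus $\mathcal{N}$ of holonomy covers locally at $(\hat{X},\hat{\omega})$; (b) bound the rank of $\mathcal{M}$ from below to invoke a classification theorem; and (c) eliminate all possibilities except the two claimed ones.

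For step (a), $\mathcal{N}$ is locally a linear subspace in period coordinates, namely the $\zeta_k$-eigenspace of the deck transformation action $\tau^*$ on the relative cohomology $H^1(\hat{X}, \Sigma(\hat{\omega}); \mathbb{C})$, where $\tau^*\hat{\omega} = \zeta_k \hat{\omega}$ for a primitive $k$-th root of unity $\zeta_k$. Since $(X,\xi)$ is generic in $K$, its cover $(\hat{X},\hat{\omega})$ is generic in $\mathcal{N}$, so \cite{EMM15} gives $\mathcal{M} \supseteq \mathcal{N}$ on a neighborhood. For step (b), a $\tau$-eigenspace computation shows $\dim_{\mathbb{C}} \mathcal{N}$ grows with $g$; the hypothesis $g > 2$ places the rank of $\mathcal{M}$ in the regime covered by Mirzakhani--Wright and Apisa--Wright classifications of high-rank affine invariant submanifolds, which says such an $\mathcal{M}$ must be a full stratum component, a hyperelliptic locus, or a locus of proper branched covers of lower-genus strata.

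For step (c), the cover-locus possibility is excluded by combining primitivity of $\xi$ with the primality of $k$: any non-trivial further cover structure on $(\hat{X},\hat{\omega})$ compatible with $\tau$ would descend to a cover structure on $(X,\xi)$, contradicting primitivity (primality of $k$ means $\mathbb{Z}/k$ has no non-trivial proper subgroups, forbidding intermediate covers). If $K$ is non-hyperelliptic, a hyperelliptic involution on $\hat{X}$ commuting with $\tau$ up to sign would descend to one on $X$, so this case is also excluded and we conclude $\mathcal{M} = \hat{K}$, proving (i). If $K$ is hyperelliptic, the involution $\sigma$ on $X$ lifts to $\hat{\sigma}$ on $\hat{X}$, forcing $\mathcal{M}$ into the hyperelliptic locus; explicit computation of the quotient $\hat{X}/\hat{\sigma} \cong \mathbb{P}^1$ with its ramification data --- tracking which zeros are $\sigma$-fixed, how they lift under the degree-$k$ holonomy cover, and applying Riemann--Hurwitz --- identifies the stated double covers in cases (a), (b), (c).

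The main obstacle I anticipate is the ramification bookkeeping in the hyperelliptic case: the exponents $2m_i + k - 2$ and the counts $-1^{2gk}$, $-1^{2gk+k}$, $-1^{2gk+2k}$ of simple poles require tracking the interaction between the degree-$2$ hyperelliptic cover and the degree-$k$ holonomy cover, including the parities of zero orders and whether each zero is $\sigma$-fixed versus swapped with a partner. A secondary challenge is confirming that $\mathcal{M}$ fills out the \emph{full} hyperelliptic locus rather than some proper sub-locus; this reduces to matching dimensions, which again falls back on the $\tau$-eigenspace computation of step (b).
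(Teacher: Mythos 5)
Your step (b) contains the central gap. Knowing that $\mathcal{M}$ contains the locus $\mathcal{N}$ of holonomy covers only puts the $\zeta$- and $\bar{\zeta}$-eigenspaces of $\tau^*$ into $p(T\mathcal{M})$, which gives $\rank(\mathcal{M}) \gtrsim 2g+n-2$. The high-rank threshold of Apisa--Wright is $\hat{g}/2+1$, and $\hat{g}$ grows like $k(g-1)$, so for large primes $k$ this threshold far exceeds $2g+n-2$: the hypothesis $g>2$ alone does not place you in the high-rank regime, and no ``$\tau$-eigenspace computation'' of $\dim\mathcal{N}$ can fix this. The paper's route is to first prove that $\mathcal{F}(\mathcal{M})$ is \emph{arithmetic}, which is the bulk of the work: one produces a surface in $K$ carrying a Euclidean (embedded) cylinder by degenerating to the boundary of the multi-scale compactification and manipulating level graphs (an induction on genus with base case $g=3$), then collapses all $\mathcal{N}$-parallel cylinders to exhibit a lone cylinder in its $\mathcal{M}$-parallel class and invokes Wright's theorem that this forces field of definition $\mathbb{Q}$. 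Only then, because $p(T\mathcal{F(M)})$ is defined over $\mathbb{Q}$ and hence Galois-stable, can one conjugate the $\zeta$-eigenspace to conclude that \emph{all} primitive eigenspaces $H^1(\hat{X};\mathbb{C})_{\zeta^{\ell}}$, $\ell=1,\dots,k-1$, lie in $p(T\mathcal{M})$; summing their dimensions is what yields high rank (with a separate argument for $(k,g)=(3,3)$). Note the paper itself records that non-arithmetic orbit closures of holonomy covers do occur in genus zero, so arithmeticity genuinely has to be proved, not assumed.

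Your step (c) also misfires in its elimination of the cover-locus alternative. Primitivity of $\xi$ does not forbid $(\hat{X},\hat{\omega})$ from being a holonomy double cover of a half-translation surface --- the hyperelliptic case of the theorem is exactly such a situation, so an argument ``primitivity plus primality of $k$ excludes intermediate covers'' proves too much. What the paper does instead: using minimality of generic covers (itself a consequence of high rank), the half-translation involution $J$ is shown to commute with $\tau$ and hence descends to an involution $j$ on $X$ with $j^*\xi=-\xi$, exhibiting $K$ as intermediate $2$-cyclic covers of a stratum of $(2k)$-differentials; a Riemann--Hurwitz dimension count then rules out a positive-genus quotient, so this branch survives only when the quotient has genus zero, i.e.\ when $K$ is hyperelliptic. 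Finally, two points you wave at dimension-matching are handled in the paper by marked-point technology: freeness of marked points (Apisa) in the non-hyperelliptic case, and the absence of non-Weierstrass periodic points on hyperelliptic loci (Apisa--Wright) to show $\mathcal{M}$ is the \emph{full} hyperelliptic locus rather than a proper sublocus. Your ramification bookkeeping for the strata in (a)--(c) is the right computation, but it is the least of the difficulties.
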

According to Chen-Gendron \cite{CG22}, hyperelliptic components of primitive $k$-differentials are classified as in ($a$), ($b$), or ($c$) of Theorem \ref{theorem_holonomyorbit}. We will also prove a similar result for many low genus components (Theorem \ref{theorem_LGC}). There is an extra possibility for the orbit closure of the holonomy covers of a stratum when $g\leq 2$, which is a non-arithmetic subvariety. We are unable to classify when this phenomenon occurs exactly, and hence cannot determine the asymptotics for all low genus components. In contrast to our result, many mathematicians beginning with Veech \cite{Vee89} have found non-arithmetic orbit closures of holonomy covers of genus zero strata. However, Mirzakhani-Wright \cite{MW18} also found infinitely many genus zero strata which unfold to surfaces with a dense $GL^+(2,\mathbb{R})$-orbit. Apisa \cite{Api21} classified the orbit closures of hyperelliptic holonomy covers of genus zero strata and obtained both low and high dimensional orbit closures. Outside of genus zero and $k \in \{1,2\}$, the orbit closures of holonomy covers have never been computed until now. 

Aside from when $k \in \{1,2\}$, not much is understood about strata of $(1/k)$-translation surfaces. This paper introduces different techniques and properties of strata which hopefully can be useful later. For instance, Lemma \ref{lemma_simplecylinder} proves the existence of Euclidean cylinders in many strata, and having such a cylinder to collapse is a useful tool for proofs by induction. Naturally, periodic trajectories on rational billiard tables and Platonic solids correspond to cylinders on $(1/k)$-translation surfaces (for instance, see \cite{Api21} and \cite{AAH22}). Holomorphic quadratic differentials correspond to the cotangent bundle of Teichm\"uller space. Higher-order differentials correspond to more abstract geometric structures. For instance, cubic differentials appear in the study of convex projective structures and quartic and sextic differentials in the study of Hitchin components. 

\subsection{Outline} In Section \ref{section_preliminaries}, we discuss known results and preliminaries pertaining to counting functions on translation surfaces, $(1/k)$-translation surfaces, and affine invariant subvarieties. In Section \ref{lemmas}, we prove Theorem \ref{theorem_holonomyorbit} and the partial result in low genus, Theorem \ref{theorem_LGC}. In Section \ref{theorems}, we use Theorem \ref{theorem_holonomyorbit} to obtain and discuss the weak asymptotics of counting functions on $(1/k)$-translation surfaces when $k>2$ is prime and $g>2$.

\subsection{Acknowledgments} The author is grateful for her advisor, Ben Dozier, for suggesting this problem and for his time and mentorship throughout the project. The author extends special thanks to Paul Apisa for many helpful conversations and John Rached for helpful conversations and comments on the draft. The author also thanks Dawei Chen, Samuel Grushevsky, and Alex Wright for helpful comments and conversations.

\section{Preliminaries} \label{section_preliminaries}

\subsection{Counting functions for translation surfaces}
By Eskin-Masur \cite{EM01}, almost every translation surface in a given component of a stratum has the same asymptotics for a counting function  (up to re-normalizing by the area).

\begin{theorem}[Eskin-Masur] \label{SV_const}
    For every connected component $K$ of $\mathcal{H}_g(\mu)$, there exists constants $c_{cyl}$ and $c_{sc}$ such that for almost every $M \in K$, the counting functions $N_{cyl}(M,L)$ and $N_{sc}(M,L)$ have the quadratic asymptotics $$\lim_{L\to \infty} \frac{N_{cyl}(M,L)}{\pi L^2} = \frac{c_{cyl}}{\emph{Area}(M)} \hspace{35pt} \lim_{L\to \infty} \frac{N_{sc}(M,L)}{\pi L^2} = \frac{c_{sc}}{\emph{Area}(M)}.$$ 
\end{theorem}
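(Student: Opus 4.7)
The strategy is to translate the counting asymptotics into ergodic statements for the $SL(2,\mathbb{R})$-action via the Siegel--Veech transform. For any compactly supported $f : \mathbb{R}^2 \to \mathbb{R}$, define
\[
\hat f(M) \;=\; \sum_{v \in V(M)} f(v),
\]
where $V(M)$ is the set of holonomy vectors of saddle connections on $M$ (respectively, of cylinder core curves on $M$). Then $N_{sc}(M,L) = \hat{\chi}_{D_L}(M)$ where $D_L \subset \mathbb{R}^2$ is the disk of radius $L$, and likewise for $N_{cyl}$. The first step I would carry out is Veech's integration formula: the linear functional $f \mapsto \int_K \hat f \, d\mu$ is $SL(2,\mathbb{R})$-invariant and supported on holonomy vectors, so it must equal $c(K) \int_{\mathbb{R}^2} f \, dA$ for a unique constant $c(K)$, which will be the candidate for $c_{sc}$ or $c_{cyl}$ in the theorem.

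Next, I would connect counting to the Teichm\"uller flow $g_t = \operatorname{diag}(e^t, e^{-t})$ using the circle-average trick. A vector of length at most $e^t$ can be detected, after a suitable rotation $r_\theta$ followed by $g_t$, as lying in a fixed short horizontal trapezoid $T \subset \mathbb{R}^2$ near the origin, with negligible boundary error as $t \to \infty$. Averaging over $\theta \in [0, 2\pi)$ yields
\[
\frac{N_*(M, e^t)}{\pi e^{2t}} \;\approx\; \frac{1}{\pi \, \text{Area}(T)} \int_0^{2\pi} \hat{\chi}_{T}(g_t r_\theta M) \, d\theta.
\]
The counting asymptotic then reduces to showing that for a.e.\ $M \in K$,
\[
\lim_{t \to \infty} \int_0^{2\pi} \hat{\chi}_T(g_t r_\theta M) \, d\theta \;=\; \frac{c(K) \cdot \text{Area}(T)}{\text{Area}(M)},
\]
via ergodicity (Masur--Veech) of $g_t$ on the area-one locus of $K$ together with the Siegel--Veech formula applied to $f = \chi_T$.

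The main obstacle is that $\hat{\chi}_T$ is unbounded on $K$, so Birkhoff's theorem does not immediately yield pointwise a.e.\ convergence of the circle averages. The heart of the matter is an $L^p$-bound ($p > 1$) on the Siegel--Veech transform, which Eskin--Masur establish via second-moment estimates on the number of short saddle connections on a typical surface, relying crucially on quantitative nondivergence for $SL(2,\mathbb{R})$-orbits and recurrence of $g_t r_\theta M$ to the thick part of the stratum. Once $\hat{\chi}_T \in L^{1+\varepsilon}(K, \mu)$ is established, combining the mean ergodic theorem with a standard Borel--Cantelli and maximal-inequality argument upgrades mean convergence to almost-sure convergence along a discrete sequence of times, and then to the full continuous limit in $L$ after routine control on intermediate $t$.
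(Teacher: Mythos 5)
This statement is not proved in the paper at all: it is quoted as a known result of Eskin--Masur \cite{EM01}, and your sketch is essentially the strategy of that original proof (Siegel--Veech transform and integration formula, reduction of counting to expanding circle averages $\theta \mapsto g_t r_\theta M$, Masur--Veech ergodicity, and the $L^{1+\varepsilon}$ integrability of the transform obtained from nondivergence/recurrence estimates). Your outline is accurate at this level of detail and correctly identifies, rather than glosses over, the genuinely hard steps — the $L^{1+\varepsilon}$ bound and the upgrade from mean to almost-everywhere convergence of the circle averages — so it matches the cited source's approach.
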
 
The constants $c_{cyl}$ and $c_{sc}$ are called \textit{Siegel-Veech constants}. In \cite{EMZ03}, Siegel-Veech constants associated to a component $K$ of $\mathcal{H}_g(\mu)$ were computed in terms of volumes of unit area hyperboloids in boundary strata adjacent to the cusp in $K$ where the length of the configuration is short. In \cite{AEZ16} and \cite{Gou15}, these ideas were generalized to strata of half-translation surfaces. Eskin-Okounkov \cite{EO01} computed volumes of unit area hyperboloids of strata of translation surfaces and Goujard \cite{Gou16} of half-translation surfaces. These volumes are obtained by coning off the hyperboloids and then taking its Masur-Veech volume. The measure zero set in a component excluded from Theorem \ref{SV_const} is not well-understood. 

Eskin-Mirzakhani-Mohammadi \cite{EMM15} proved that the weak asymptotics of counting functions for translation surfaces only depend on their $GL^+(2,\mathbb{R})$-orbit closures. It is conjectured that the full measure set in Theorem \ref{SV_const} includes all surfaces with a dense $GL^+(2,\mathbb{R})$-orbit, and the extra averaging in the following Theorem is unnecessary. 

\begin{theorem}[Eskin-Mirzakhani-Mohammadi] 
\label{weak} 
For any $M \in \mathcal{H}_g(\mu)$, there are constants $c,s>0$ dependent on $\overline{GL^+(2,\mathbb{R})M}$ such that 
$$N_{cyl}(M,L) `` \sim ”  \frac{c \cdot \pi L^2}{\emph{Area}(M)} \hspace{35pt} N_{sc}(M,L) `` \sim ” \frac{s \cdot \pi L^2}{\emph{Area}(M)}.$$ 
\end{theorem}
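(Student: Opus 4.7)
The plan is to apply the equidistribution theorems of \cite{EMM15} to the Siegel-Veech transform. What distinguishes this statement from Theorem \ref{SV_const} is that weak asymptotics exist for \emph{every} $M \in \mathcal{H}_g(\mu)$, since Ces\`aro averages are invariant under $GL^+(2,\mathbb{R})$ and thus only depend on the orbit closure $\mathcal{U} := \overline{GL^+(2,\mathbb{R})M}$.

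First I would invoke the main theorem of \cite{EMM15}: $\mathcal{U}$ is an affine invariant submanifold of $\mathcal{H}_g(\mu)$, supporting a unique $SL(2,\mathbb{R})$-invariant ergodic probability measure $\nu_\mathcal{U}$ on its unit-area locus. Next, set up the Siegel-Veech transform: for $\phi := \frac{1}{\pi}\chi_{B_1}$ with $B_1 \subset \mathbb{R}^2$ the unit disk, let
$$\hat{\phi}_*(N) := \sum_{v \in V_*(N)} \phi(v),$$
where $V_*(N)$ is the set of holonomy vectors of cylinders or saddle connections on $N$ (according to whether $* = cyl$ or $* = sc$). By the Siegel-Veech formula of \cite{EMZ03}, the integral $\int \hat{\phi}_*\, d\nu_\mathcal{U}$ equals a finite constant $c_*(\mathcal{U})$.

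The second step is to express the Ces\`aro average of $N_*(M, e^t) e^{-2t}$ as a combined circle and geodesic-flow average of $\hat{\phi}_*$. Integrating the characteristic function of $B_1$ under $g_{-t} r_\theta$ over $\theta \in [0, 2\pi]$ and $t \in [0, T]$ sweeps out a disk of radius approximately $e^T$ up to lower-order boundary contributions, giving
$$\frac{1}{T}\int_0^T N_*(M, e^t)\, e^{-2t}\, dt \;=\; \pi \cdot \frac{1}{T}\int_0^T \int_0^{2\pi} \hat{\phi}_*\bigl(g_{-t} r_\theta M\bigr)\,\frac{d\theta}{2\pi}\,dt + o(1).$$
Applying the circle-geodesic equidistribution result of \cite{EMM15}---which holds for \emph{every} $M \in \mathcal{U}$ and every reasonable test function $f$---yields
$$\lim_{T\to\infty} \frac{1}{T}\int_0^T \int_0^{2\pi} f\bigl(g_{t} r_\theta M\bigr)\,\frac{d\theta}{2\pi}\,dt = \int f\, d\nu_\mathcal{U},$$
and combining the three steps gives the asymptotic with $c = c_{cyl}(\mathcal{U})/\mathrm{Area}(M)$ and $s = c_{sc}(\mathcal{U})/\mathrm{Area}(M)$.

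The main obstacle is that $\hat{\phi}_*$ is unbounded, diverging on the thin part of $\mathcal{H}_g(\mu)$ where surfaces admit very short saddle connections, so the equidistribution statement cannot be applied to it directly. One proceeds by truncating $\hat{\phi}_*$ to bounded functions, applying equidistribution to each truncation, and controlling the tail via the no-escape-of-mass estimates at the heart of \cite{EMM15} (in the spirit of Athreya's quantitative recurrence). It is precisely these \emph{uniform} recurrence estimates---not merely $\nu_\mathcal{U}$-a.e.\ ergodicity---that allow the conclusion to hold for \emph{every} starting surface, making Theorem \ref{weak} substantially stronger than what a direct application of the Birkhoff ergodic theorem would produce.
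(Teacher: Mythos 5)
The paper offers no proof of Theorem \ref{weak}: it is imported directly from \cite[Theorem 2.12]{EMM15} (as the abstract notes), so there is no in-paper argument to compare yours against. Judged on its own terms, your sketch is an essentially faithful reconstruction of the argument behind that theorem: Siegel--Veech transform, the every-point circle/geodesic equidistribution theorem of \cite{EMM15}, and a truncation argument to handle the unboundedness of the transform. Two steps need more care than you give them. First, the passage from $N_*(M,e^t)e^{-2t}$ to a circle average of $\hat{\phi}_*$ is not the soft ``sweeps out a disk'' computation you describe; it is the Eskin--Masur comparison lemma (see \cite{EM01}), which works with indicators of carefully chosen trapezoidal regions and controls the contribution of holonomy vectors that are nearly vertical or very short --- the $o(1)$ in your displayed identity is exactly what that lemma furnishes. (Your two displays are also inconsistent about $g_{-t}$ versus $g_t$; this is harmless after averaging over the full circle, but should be fixed.) Second, the truncation step requires a quantitative input: the pointwise bound $\hat{\phi}_* \leq C\,\alpha^{1+\delta}$ of Eskin--Masur together with the statement in \cite{EMM15} that the circle/geodesic averages of $\alpha^{1+\delta}$ are bounded uniformly in $T$ for \emph{every} starting surface. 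You allude to this correctly (``in the spirit of Athreya's quantitative recurrence''), and it is indeed the precise point where the every-$M$ conclusion, as opposed to the almost-every-$M$ conclusion of Theorem \ref{SV_const}, comes from. Finally, a normalization slip: since $\nu_{\mathcal{U}}$ lives on the unit-area locus, the factor $\mathrm{Area}(M)$ should arise from rescaling $M$ to unit area, so the constant is $c = c_{cyl}(\mathcal{U})$ itself; writing $c = c_{cyl}(\mathcal{U})/\mathrm{Area}(M)$ double-counts the area, which already appears explicitly in the statement of Theorem \ref{weak}. With these points made precise, your outline is the standard proof.
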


This associates to $M$ an average of $N_{cyl}(M,L)$ and $N_{sc}(M,L)$ as $L \to \infty$. Moreover, recall almost every surface in a component $K$ in $\mathcal{H}_g(\mu)$ has a dense $GL^+(2,\mathbb{R})$-orbit in $K$. One can then take $c$ and $s$ in the Theorem above to be the Siegel-Veech constant associated to $K$ for these generic surfaces.

Therefore, if the holonomy cover, defined below, $(\hat{X},\hat{\omega})$ of $(X,\xi)$ has a dense $GL^+(2,\mathbb{R})$-orbit in a component of a stratum, we know the weak asymptotics for $(\hat{X},\hat{\omega})$. There is a simple relationship between the counting functions on $(X,\xi)$ and the counting functions on $(\hat{X},\hat{\omega})$, so proving Theorem \ref{theorem_holonomyorbit} is the main component of this paper.

\subsection{Holonomy covers} \label{subsection_cover}
We can always unfold a $(1/k)$-translation surface into a translation surface, formally called its \textit{holonomy cover}. More precisely, given $(X,\xi) \in \Omega^k\mathcal{M}_g(\mu)$, it is a canonical ramified cyclic cover $\pi: (\hat{X},\hat{\omega}) \to (X,\xi)$ of degree $k$ such that the pullback of $\xi$ is the $k$-th power of the abelian differential $\hat{\omega}$ on $\hat{X}$. The holonomy cover is only branched over zeros and poles of $\xi$, and $\hat{X}$ is connected if and only if $\xi$ is primitive. Furthermore, there is a generator $\tau$ of the cyclic deck group of $\hat{X}$ associated to a primitive $k$-th root of unity $\zeta$ such that $\tau^*\hat{\omega} = \zeta \hat{\omega}$. The generator $\tau$ induces another periodic automorphism on $H_1(\hat{X},\Sigma(\hat{\omega});\mathbb{C})$ and $H^1(\hat{X},\Sigma(\hat{\omega});\mathbb{C})$, thus decomposing them into respective eigenspaces $H_1(\hat{X},\Sigma(\hat{\omega});\mathbb{C})_1,...,H_1(\hat{X},\Sigma(\hat{\omega});\mathbb{C})_{\zeta^{k-1}}$ and $H^1(\hat{X},\Sigma(\hat{\omega});\mathbb{C})_1,...,H^1(\hat{X},\Sigma(\hat{\omega});\mathbb{C})_{\zeta^{k-1}}$ associated to the eigenvalues $1,\zeta,...,\zeta^{k-1}.$  An intuitive way to think about the construction of the holonomy cover is to take $k$ copies of the polygonal representation of $(X,\xi)$, rotate each one by a different multiple of $\frac{2\pi}{k}$, and then re-label sides so pairs are identified by translation. See Figures \ref{fig: cover_construction} and \ref{fig: geodesiccover}. \begin{figure}
    \begin{center}
    {\includegraphics[width=0.65\textwidth]{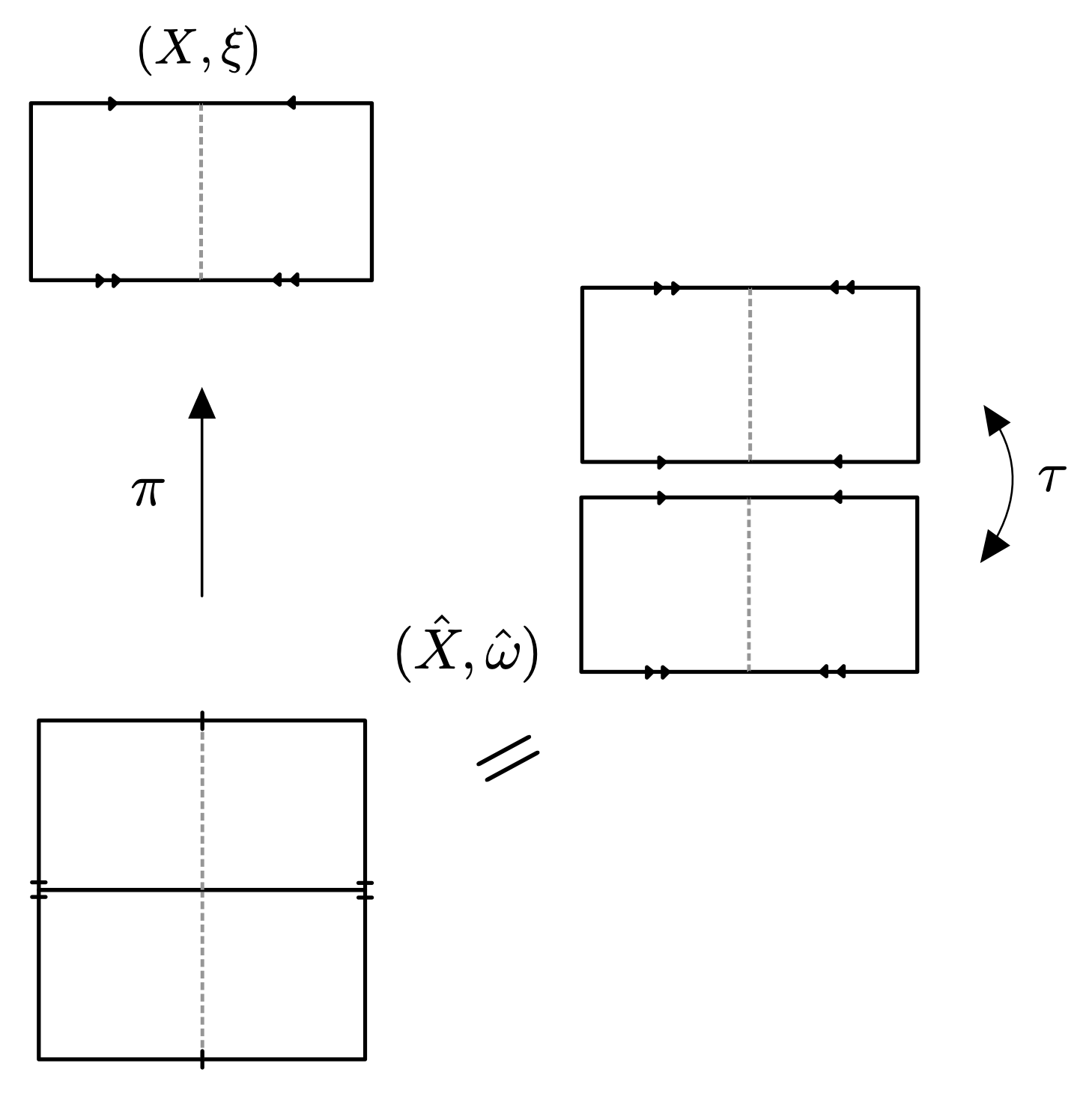}}
    \end{center}
    \caption{Holonomy cover construction of a genus zero quadratic differential. This particular example is called the pillowcase surface.}
    \label{fig: cover_construction}
\end{figure} 

We let $\mathcal{H}_{\hat{g}}(\hat{\mu})$ be the ambient stratum of the holonomy cover $(\hat{X},\hat{\omega})$. Its partition $\hat{\mu}$ may have $0$ as some entries if the pre-image of some poles is a (regular) marked point on $\hat{X}$. The unusal convention is to regard these marked points as zeros of order zero. The component $\mathcal{H}_{\hat{g}}(\hat{\mu})$ is then a point-marking which fibers over the underlying surfaces without marked points. The following Proposition is from \cite[Proposition 2.4]{BCGGM19}.

\begin{proposition} [Riemann-Hurwitz Formula] The ambient stratum $\mathcal{H}_{\hat{g}}(\hat{\mu})$ of the holonomy cover $(\hat{X},\hat{\omega})$ of $(X,\xi) \in \Omega^k\mathcal{M}_g(\mu)^{\emph{prim}}$ has the following properties given the partition $\mu = (m_1,...,m_n)$.
\begin{enumerate}
    \item The genus $\hat{g}$ of $\hat{X}$ is $$\hat{g} = 1+k(g-1) +\frac{1}{2}\left(kn-\sum_{j=1}^{n}\emph{gcd}(m_j,k)\right).$$ 
    \item The partition $\hat{\mu}$ of $\mathcal{H}_{\hat{g}}(\hat{\mu})$ is $$\hat{\mu} = (\underbrace{\hat{m}_1,...,\hat{m}_1}_{\emph{gcd}(m_1,k)}, \underbrace{\hat{m}_2,...,\hat{m}_2}_{\emph{gcd}(m_2,k)},...,\underbrace{\hat{m}_n,...,\hat{m}_n}_{\emph{gcd}(m_n,k)})$$ where $\hat{m}_j:=\frac{m_j+k}{\emph{gcd}(m_j,k)}-1.$
\end{enumerate}
\end{proposition}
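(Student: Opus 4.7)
The plan is to carry out a standard Riemann--Hurwitz calculation on the ramified cyclic cover $\pi \colon (\hat X,\hat\omega) \to (X,\xi)$, using the defining equation $\pi^*\xi = \hat\omega^k$ to control both the degree of ramification over each singularity and the order of $\hat\omega$ at each preimage. Primitivity of $\xi$ enters only to guarantee that $\hat X$ is connected (so that $\hat g$ is a single well-defined number); this is standard and will be quoted from the construction recalled in Section \ref{subsection_cover}.

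First, I would do the local analysis around a singularity $p_j$ of order $m := m_j$. In an adapted coordinate $z$ on $X$, write $\xi = z^m\, dz^k$ up to a nonzero constant. Set $d := \gcd(m,k)$ and $k = d k'$, $m = d m'$ with $\gcd(m',k') = 1$. Finding a local section $\hat\omega$ of $\pi^*$ with $\hat\omega^k = z^m\, dz^k$ amounts to extracting a $k$-th root of $z^m$; equivalently, a $k'$-th root of $z^{m'}$. Since $\gcd(m',k') = 1$, this requires $d$ distinct local branches, each of which is parametrized by a uniformizer $w$ with $z = w^{k'}$. Thus over $p_j$ the cover has exactly $\gcd(m_j,k)$ preimages, each with ramification index $k/\gcd(m_j,k)$, and the degrees add up to $k$ as they must.

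Next, substituting $z = w^{k'}$ into $\hat\omega^k = z^m\, dz^k$ yields
\[
\hat\omega^k \;=\; w^{mk'}\bigl(k'w^{k'-1}\,dw\bigr)^k \;=\; (k')^k\, w^{mk' + k(k'-1)}\,dw^k,
\]
so $\hat\omega = c\, w^{(mk' + k(k'-1))/k}\, dw$ for a nonzero constant $c$. The order of $\hat\omega$ at this preimage is therefore
\[
\frac{mk'}{k} + (k'-1) \;=\; \frac{m}{d} + \frac{k}{d} - 1 \;=\; \frac{m_j + k}{\gcd(m_j,k)} - 1 \;=\; \hat m_j,
\]
which establishes (ii) since this order occurs at each of the $\gcd(m_j,k)$ preimages of $p_j$.

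Finally, for (i), I would apply the Riemann--Hurwitz formula to the degree-$k$ map $\pi$: each $p_j$ contributes $\gcd(m_j,k)\bigl(k/\gcd(m_j,k) - 1\bigr) = k - \gcd(m_j,k)$ to the ramification divisor, so
\[
2\hat g - 2 \;=\; k(2g-2) + \sum_{j=1}^n \bigl(k - \gcd(m_j,k)\bigr) \;=\; k(2g-2) + kn - \sum_{j=1}^n \gcd(m_j,k),
\]
and solving for $\hat g$ gives the stated formula. The only mild subtlety is the local form $\xi = z^m\, dz^k$, which is not quite literally true in an arbitrary coordinate but holds after an analytic change of coordinate (equivalent to the well-known local normal form for a $k$-differential at a point of order $m$); this, together with connectedness of $\hat X$ in the primitive case, is the only non-routine input and is where I would cite the construction of the holonomy cover from \cite{BCGGM19}. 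Everything else is a direct substitution and an application of Riemann--Hurwitz.
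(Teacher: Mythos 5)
Your proof is correct, but it is worth noting that the paper itself does not prove this proposition at all: it is imported verbatim from \cite[Proposition 2.4]{BCGGM19}, so your direct Riemann--Hurwitz computation is a self-contained argument where the paper only cites. The computation checks out: with $d=\gcd(m_j,k)$, $k=dk'$, the substitution $z=w^{k'}$ gives $\hat\omega$ of order $m_j/d + k/d - 1 = \hat m_j$ at each preimage, and the ramification contribution $\sum_j d_j(k/d_j-1)=kn-\sum_j\gcd(m_j,k)$ plugged into Riemann--Hurwitz for the connected degree-$k$ cover yields exactly the stated $\hat g$; connectedness is correctly reduced to primitivity, which is the one place the hypothesis enters. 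Two small remarks. First, your justification that there are exactly $d$ preimages, each of ramification index $k'$, is phrased as counting ``local branches'' of the $k$-th root; the cleaner way to say it is that the deck group is cyclic of order $k$ and the local monodromy of the cover around a singularity of order $m_j$ is multiplication of the chosen branch of $\sqrt[k]{\xi}$ by $e^{2\pi i m_j/k}$, an element of order $k/\gcd(m_j,k)$ in $\mathbb{Z}/k$, so the fiber over the singularity consists of $\gcd(m_j,k)$ orbits of that size --- same conclusion, but it makes the count a statement about the globally defined cyclic cover rather than about ad hoc local branches. Second, the appeal to the exact local normal form $\xi=z^{m}(dz)^k$ is not actually needed: writing $\xi=z^{m}u(z)(dz)^k$ with $u(0)\neq 0$ and absorbing a local $k$-th root of $u$ into the coordinate gives the same orders, so the only genuine inputs are the cyclic-cover construction and connectedness, exactly as you say. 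Finally, note that your formula correctly produces $\hat m_j=0$ (marked points, per Remark \ref{remark_markedpt}) when $m_j+k=\gcd(m_j,k)$, consistent with the paper's conventions.
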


\begin{remark} \label{remark_markedpt}
  When $k$ is prime, marked points on $(\hat{X},\hat{\omega})$ are always the pre-images of poles of order $k-1$ on $(X,\xi)$.
\end{remark}

\begin{remark}
The locus of holonomy covers of a genus zero stratum of half-translation surfaces is a hyperelliptic locus in which we additionally mark the pre-images of poles.
\end{remark}

\subsection{Geodesics and local coordinates} 
For any translation surface $(S,\omega)$, the \textit{period} of any oriented path $\gamma$ on $S$, defined by $\text{hol}(\gamma) : = \int_{\gamma} \omega$, is well-defined. This means that all cylinder core curves and saddle connections on $(S,\omega)$ have a constant, well-defined slope. When $k > 1$, however, direction is only well-defined up to an angle of $\frac{2\pi}{k}$. 

Away from singularities, $\pi: (\hat{X},\hat{\omega}) \to (X,\xi)$ is a Riemannian cover. Therefore, the lift of a cylinder (or saddle connection) on $(X,\xi)$ is a union of cylinders (resp. saddle connections) on $(\hat{X},\hat{\omega})$, and the periods of the lifts all differ by multiplication by a $k$-th root of unity. In particular, there are $k$ geodesics in the pre-image of a cylinder or saddle connection. Moreover, all cylinders (resp. saddle connections) project to cylinders (resp. saddle connections) on $(X,\xi)$. \begin{figure}
    \begin{center}
    {\includegraphics[width=.9\textwidth]{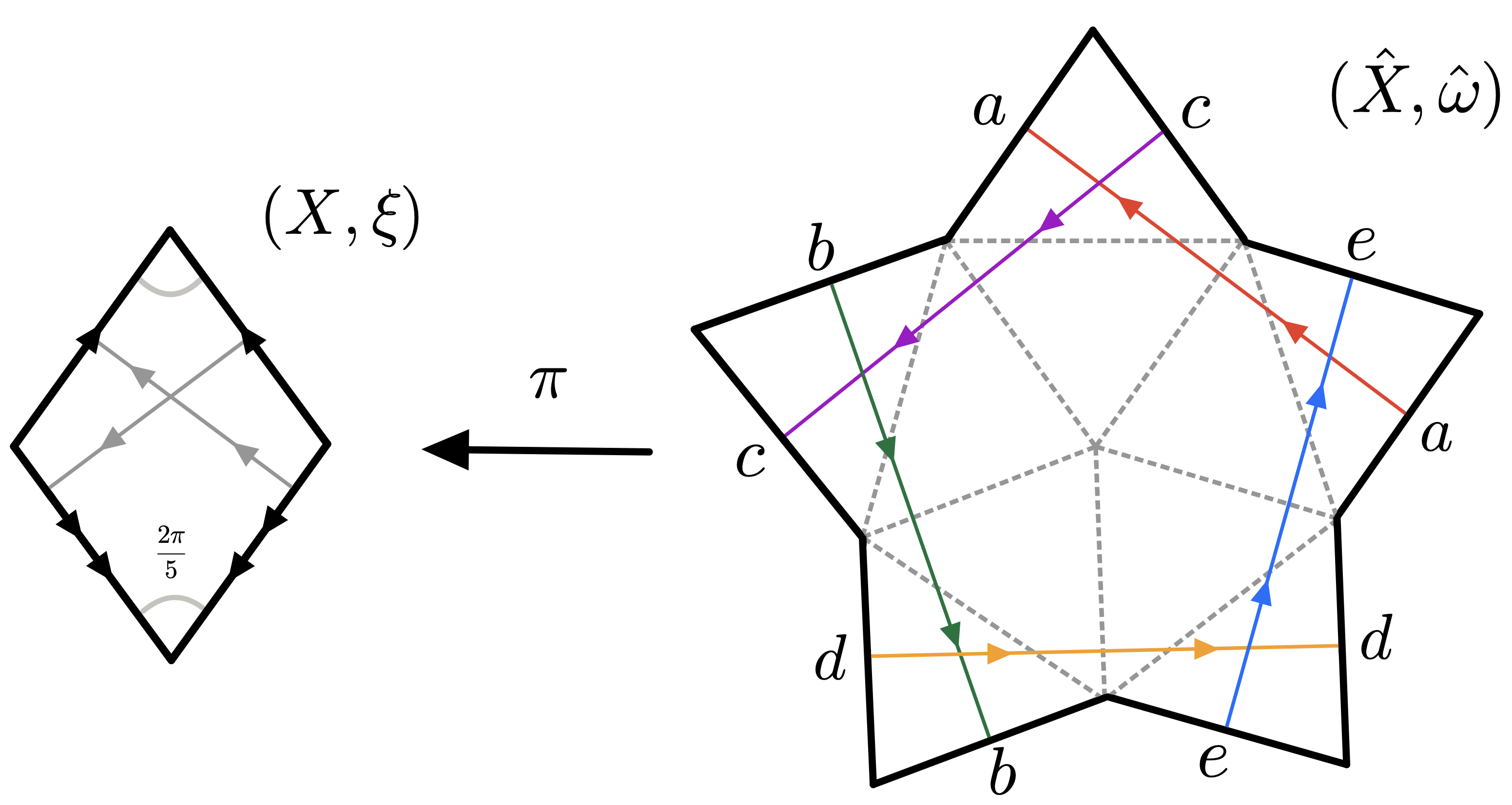}}
    \end{center}
    \caption{The lift of a self-intersecting cylinder core curve on a $(1/5)$-translation surface $(X,\xi)$ to its holonomy cover $(\hat{X},\hat{\omega})$. The lift is a union of five cylinder core curves. Edges labeled with the same letter get identified.}
    \label{fig: geodesiccover}
\end{figure} Thus, if $M = (X,\xi)$ and $\hat{M} = (\hat{X},\hat{\omega})$, there are the following counting relations \begin{equation} \label{equation_count}  N_{cyl}(\hat{M},L) = k \cdot N_{cyl}(M,L)  \hspace{35pt} N_{sc}(\hat{M},L) = k \cdot N_{sc}(M,L).  \end{equation} We emphasize these relations only hold because we mark the pre-images of poles which are regular (Remark \ref{remark_markedpt}). Additionally, it is obvious that \begin{equation} \label{equation_area}  \text{Area}(\hat{M}) = k \cdot \text{Area}(M) . \end{equation}

The period of some branch of the $k$-th root $\sqrt[k]{\xi}$ over a curve $\gamma$ is equal to the period of $\hat{\omega}$ over $\frac{1}{k}\sum_{j=0}^{k-1} \bar{\zeta}^{j}\tau_*^j([\hat{\gamma}])$ where $\hat{\gamma}$ is a lift of $\gamma$ to $(\hat{X},\hat{\omega})$ chosen depending on the branch of $\sqrt[k]{\xi}$. We say two cylinder core curves $\gamma$ and $\gamma'$ on $(X,\xi)$ are \textit{hat homologous} if $$\sum_{j=1}^k \bar{\zeta}^{j}\tau^j_*([\hat{\gamma}_i]) = r\sum_{j=1}^k \bar{\zeta}^{j}\tau^j_*([\hat{\gamma}'_i])$$ for some $r \in \mathbb{R}$ and choice of lifts $\hat{\gamma}_i$ and $\hat{\gamma}_i'$ for $\gamma$ and $\gamma'$ respectively. We can take $r$ to be in $\mathbb{Q}(\zeta) \cap \mathbb{R}$ because these hat homology classes are defined over $\mathbb{Q}(\zeta)$. Because these periods differ by a real number, $\hat{\gamma}_i$ and $\hat{\gamma}_i'$ are parallel locally in the locus of holonomy cover $\mathcal{N}$.

Surfaces in the primitive locus of a stratum are locally determined by periods of curves on their holonomy cover \cite[Corollary 2.3]{BCGGM19} and the tangent space of the locus of holonomy covers is given by the $\zeta$-eigenspace of relative cohomology \cite[Theorem 2.2]{BCGGM19}. We state these results below.

\begin{theorem} \label{theorem_localcoord} \emph{(Bainbridge-Chen-Gendron-Grushevsky-M\"oller)}
   Locally at $(X,\xi)$, the locus $\Omega^k\mathcal{M}_g (\mu)^{\emph{prim}}$ has local coordinates given by the periods $\emph{hol}(\beta_i)$ on the holonomy cover $(\hat{X},\hat{\omega})$ where $\{\beta_i\}$ is a basis for $H_1(\hat{X}, \Sigma(\hat{\omega});\mathbb{C})_{\zeta}$. Moreover, the tangent space of $\mathcal{N}$ at $(\hat{X},\hat{\omega})$ is $H^1(\hat{X}, \Sigma(\hat{\omega});\mathbb{C})_{\zeta}$.
\end{theorem}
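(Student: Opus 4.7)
The plan is to realize $\Omega^k\mathcal{M}_g(\mu)^{\mathrm{prim}}$ locally as the fixed locus of a linear automorphism on a stratum of abelian differentials, via the holonomy cover construction. First I would set up the holonomy cover as a continuous family: given $(X,\xi)$, the $k$-fold cover $\hat{X} \to X$ (branched precisely at those singularities whose order is not divisible by $k$) is determined by a surjection $\pi_1(X \setminus \Sigma(\xi)) \to \mathbb{Z}/k$ that depends only on the topological data of $\mu$, and so is locally constant in families. The generator $\tau$ of the deck group and the tautological one-form $\hat{\omega}$ with $\hat{\omega}^k = \pi^*\xi$ are canonical, so the assignment $(X,\xi) \mapsto (\hat{X},\hat{\omega})$ gives a well-defined map from $\Omega^k\mathcal{M}_g(\mu)^{\mathrm{prim}}$ into $\mathcal{H}_{\hat{g}}(\hat{\mu})$ whose image is exactly $\mathcal{N}$, and I would check that this map is a homeomorphism onto its image, with inverse given by $(\hat{X},\hat{\omega}) \mapsto (\hat{X}/\langle\tau\rangle,\hat{\omega}^k)$ (the descent of $\hat{\omega}^k$ to the quotient is immediate from $\tau^*\hat{\omega}=\zeta\hat{\omega}$).

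Next I would transfer the problem to the abelian setting, where local period coordinates on $\mathcal{H}_{\hat{g}}(\hat{\mu})$ at $(\hat{X},\hat{\omega})$ are given by the whole relative cohomology $H^1(\hat{X},\Sigma(\hat{\omega});\mathbb{C})$: nearby surfaces are recovered from nearby cohomology classes by the period map $\alpha \mapsto (\hat{X}_\alpha,\hat{\omega}_\alpha)$, and this is a local biholomorphism (Veech's classical result). Since the topological cover, the branch locus, and $\tau$ are locally constant in families, $\mathcal{N}$ is cut out inside $\mathcal{H}_{\hat{g}}(\hat{\mu})$ by the single \emph{linear} condition $\tau^*\alpha = \zeta\alpha$, i.e.\ by the requirement that the period class lies in the $\zeta$-eigenspace $H^1(\hat{X},\Sigma(\hat{\omega});\mathbb{C})_\zeta$. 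This at once yields the second assertion: $\mathcal{N}$ is a linear submanifold with tangent space $H^1(\hat{X},\Sigma(\hat{\omega});\mathbb{C})_\zeta$.

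To obtain the first assertion, I would dualize via the period pairing $H^1(\hat{X},\Sigma;\mathbb{C}) \times H_1(\hat{X},\Sigma;\mathbb{C}) \to \mathbb{C}$. This pairing is $\tau$-equivariant in the sense $\langle \tau^*\alpha,\gamma\rangle = \langle \alpha,\tau_*\gamma\rangle$, so a class in the $\zeta$-eigenspace of $H^1$ pairs nontrivially only with the $\zeta$-eigenspace of $H_1$; consequently a basis $\{\beta_i\}$ of $H_1(\hat{X},\Sigma(\hat{\omega});\mathbb{C})_\zeta$ gives coordinates $\alpha \mapsto (\langle\alpha,\beta_i\rangle)$ on $H^1(\hat{X},\Sigma(\hat{\omega});\mathbb{C})_\zeta$. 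Pulling these back along the local isomorphism $\Omega^k\mathcal{M}_g(\mu)^{\mathrm{prim}} \cong \mathcal{N}$ produces the desired local coordinates $\mathrm{hol}(\beta_i)$ for $\Omega^k\mathcal{M}_g(\mu)^{\mathrm{prim}}$.

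The main obstacle I anticipate is not the linear-algebra step but the continuity of the holonomy cover construction in families: one has to argue that over a small polydisc in $\Omega^k\mathcal{M}_g(\mu)^{\mathrm{prim}}$ the covers $\hat{X}$ can be trivialized as a smooth family together with the automorphism $\tau$, so that the condition $\tau^*\hat{\omega} = \zeta\hat{\omega}$ really does correspond to an eigenspace of a \emph{single} linear map on a fixed cohomology group. This is a standard but nontrivial fact about topologically rigid branched covers; once established, the eigenspace decomposition and duality are formal.
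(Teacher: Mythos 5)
The paper does not actually prove this statement: it is quoted directly from \cite[Theorem 2.2 and Corollary 2.3]{BCGGM19}, so there is no in-paper argument to compare against. Your outline is essentially the standard proof strategy behind that reference: transfer to period coordinates on $\mathcal{H}_{\hat{g}}(\hat{\mu})$, identify $\mathcal{N}$ with the $\zeta$-eigenform locus of the deck transformation, and use the perfect pairing between $H^1(\hat{X},\Sigma(\hat{\omega});\mathbb{C})_{\zeta}$ and $H_1(\hat{X},\Sigma(\hat{\omega});\mathbb{C})_{\zeta}$ to convert eigenspace coordinates into the periods $\mathrm{hol}(\beta_i)$. That last duality step is fine as you state it.

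The weak point is the middle step. Local constancy of the cover, the branch locus, and $\tau$ in families only gives the inclusion $\mathcal{N}\subseteq\{\alpha:\tau^{*}\alpha=\zeta\alpha\}$, hence only $T_{(\hat{X},\hat{\omega})}\mathcal{N}\subseteq H^1(\hat{X},\Sigma(\hat{\omega});\mathbb{C})_{\zeta}$. The assertion that $\mathcal{N}$ is \emph{cut out} by the eigenvalue condition --- which is what makes the $\mathrm{hol}(\beta_i)$ genuine local coordinates on $\Omega^k\mathcal{M}_g(\mu)^{\mathrm{prim}}$, i.e.\ gives surjectivity onto a neighborhood in the eigenspace --- requires the converse: a nearby abelian differential whose relative period class lies in the $\zeta$-eigenspace must again carry a holomorphic order-$k$ automorphism in the isotopy class of $\tau$ with $\tau^{*}\hat{\omega}=\zeta\hat{\omega}$, whose quotient is a primitive $k$-differential with singularity data $\mu$. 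This is exactly the ``fixed locus of a linear automorphism'' mechanism you announce in your first sentence but never execute; the clean implementation is to work on the marked (Teichm\"uller-level) stratum, where the period map is a local biholomorphism onto $H^1(\hat{X},\Sigma(\hat{\omega});\mathbb{C})$, note that the mapping class of $\tau$ acts there, and observe that the locus of marked surfaces admitting the desired automorphism is the fixed locus of the map ``re-mark by $\tau$ and rescale by $\bar{\zeta}$,'' which in period coordinates is the linear map $\alpha\mapsto\bar{\zeta}\,\tau^{*}\alpha$ with fixed set the $\zeta$-eigenspace; one then checks that connectedness of the cover (equivalently primitivity of the quotient) and the singularity orders persist under small deformation. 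A minor further imprecision: the monodromy homomorphism $\pi_1(X\setminus\Sigma(\xi))\to\mathbb{Z}/k$ defining the cover is the holonomy of the flat structure of $\xi$, not something determined ``only by the topological data of $\mu$''; what your argument actually needs, and what is true, is merely that this discrete invariant is locally constant along the stratum.
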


We refer to these coordinates as \textit{cohomological coordinates}. Without resorting to the cover, it is not hard to see that the periods of some curves with respect to some branches of $\sqrt[k]{\xi}$ locally determine $(X,\xi)$. However, this system will not be useful to us. Accordingly, components of strata are orbifolds. Below is \cite[Theorem 1.1]{BCGGM19}.

\begin{theorem} \label{theorem_dimofstrata} \emph{(Bainbridge-Chen-Gendron-Grushevsky-M\"oller)}
   Every connected component of the stratum $\Omega^k\mathcal{M}_g (\mu)$ is a smooth orbifold. If the component parameterizes $k$-th powers of holomorphic abelian differentials, it has complex dimension $2g +n- 1$. Otherwise, it has complex dimension $2g +n -2 $.
\end{theorem}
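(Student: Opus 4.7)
The plan is to treat the two cases—components parametrizing $k$-th powers of abelian differentials versus primitive components—separately, and to deduce the dimension in the primitive case from Theorem~\ref{theorem_localcoord} by computing the dimension of the $\zeta$-eigenspace on the holonomy cover. Smoothness will in both cases be immediate from the existence of period coordinate charts.

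If $K$ parametrizes $k$-th powers, then $\mu = k\nu$ for some partition $\nu$ of $2g-2$ with $|\nu|=n$, and the map $\omega \mapsto \omega^k$ identifies a component of $\mathcal{H}_g(\nu)$ with $K$. The classical period coordinates on abelian strata, valued in $H^1(X,\Sigma(\omega);\mathbb{C})$, then yield $\dim_{\mathbb{C}} K = 2g+n-1$ together with smoothness.

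If $K$ is primitive, Theorem~\ref{theorem_localcoord} identifies the tangent space at $(X,\xi)$ with $H^1(\hat X,\Sigma(\hat\omega);\mathbb{C})_\zeta$ and supplies local charts by the corresponding period coordinates, so it suffices to compute this eigenspace's dimension. First, Riemann-Hurwitz together with $\hat n = \sum_j \gcd(m_j,k)$ gives
\[ \dim_{\mathbb{C}} H^1(\hat X,\Sigma(\hat\omega);\mathbb{C}) = 2\hat g+\hat n-1 = k(2g+n-2)+1. \]
Because every preimage of a singularity is included in $\Sigma(\hat\omega)$, the restriction of $\pi$ to complements is an honest $k$-fold unramified cover, so $\pi^*$ identifies the trivial deck-eigenspace with $H^1(X,\Sigma(\xi);\mathbb{C})$, of dimension $2g+n-1$. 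Assuming $k$ prime (the setting of the paper), the Galois group $(\mathbb{Z}/k)^\times$ acts on the eigenspace decomposition by relabelling the generator of the deck group and permutes the nontrivial eigenspaces transitively, so each carries the same dimension; dividing the remainder equally gives
\[ \dim_{\mathbb{C}} H^1(\hat X,\Sigma(\hat\omega);\mathbb{C})_\zeta = \frac{k(2g+n-2)+1 - (2g+n-1)}{k-1} = 2g+n-2. \]

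The orbifold structure on $K$ arises from the finite group of automorphisms of $(X,\xi)$ preserving $\xi$ acting on the period coordinates. I expect the main obstacle to be the identification of the trivial eigenspace with $H^1(X,\Sigma(\xi);\mathbb{C})$: one must verify that branching contributes no additional deck-invariants in relative cohomology, and this is precisely what is ensured by marking every preimage of a singularity of $\xi$. For general (non-prime) $k$, the Galois step requires refinement to subtract contributions from eigenspaces indexed by $k'$-th roots of unity with $k' \mid k$ a proper divisor; these correspond to intermediate $(k/d)$-differential substructures and can be handled inductively using the same local coordinate description applied to the relevant sub-covers.
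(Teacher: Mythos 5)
This statement is quoted in the paper directly from \cite[Theorem 1.1]{BCGGM19}; the paper gives no proof of it, so there is no in-paper argument to compare yours against. On its own terms, your derivation is essentially correct in the prime case: the count $2\hat g+\hat n-1=k(2g+n-2)+1$ is right, the identification of the deck-invariant part of $H^1(\hat X,\Sigma(\hat\omega);\mathbb{C})$ with $H^1(X,\Sigma(\xi);\mathbb{C})$ by transfer is valid precisely because every preimage of a singularity (including the marked preimages of poles of order $k-1$) is placed in $\Sigma(\hat\omega)$, and for $k$ prime Galois conjugacy does force all nontrivial eigenspaces to have equal dimension, yielding $2g+n-2$; this is consistent with Lemma \ref{lemma_projection} together with the eigenspace analysis of $H^0(\Sigma(\hat\omega);\mathbb{C})$ in the relative cohomology sequence. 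The $k$-th power case and the orbifold smoothness via period charts are also fine at this level of rigor.

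As a proof of the theorem as stated (for all $k$), however, two genuine gaps remain. First, for composite $k$ the ``otherwise'' case includes components parameterizing $d$-th powers of primitive $(k/d)$-differentials with $1<d<k$, and Theorem~\ref{theorem_localcoord} only covers the primitive locus; moreover your equal-division Galois step fails there, since eigenvalues $\zeta^{j}$ with $\gcd(j,k)>1$ are not Galois conjugate to $\zeta$, so the dimension of $H^1(\hat X,\Sigma(\hat\omega);\mathbb{C})_{\zeta}$ must be computed differently (e.g.\ a Chevalley--Weil type count, or Lemma \ref{lemma_projection} plus the boundary sequence), and your closing sentence about ``subtracting contributions'' is a gesture rather than an argument. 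Second, your case split tacitly assumes each connected component consists entirely of $k$-th powers or entirely of primitive differentials; this needs the (short but necessary) observation that the degree of primitivity is locally constant on a stratum — the image of the holonomy character $\pi_1(X\setminus\Sigma(\xi))\to\mathbb{Z}/k$ does not jump under deformations inside the stratum — since otherwise Theorem~\ref{theorem_localcoord} would not supply charts at every point of an ``otherwise'' component and neither the smoothness nor the constancy of the dimension would follow. With these points supplied, your argument recovers the theorem; as written it fully establishes it only for $k$ prime, which is admittedly the only case the paper uses.
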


The one dimension missing from Theorem \ref{theorem_dimofstrata} when a component does not parameterize $k$-th powers of abelian differentials is a consequence of non-trivial holonomy.

\subsection{Affine invariant subvarieties of $\mathcal{H}_g$} 
Eskin-Mirzakhani-Mohammadi \cite{EMM15} and Filip \cite{Fil16} together show that the $GL^+(2,\mathbb{R})$-orbit closure of any surface is an \textit{affine invariant subvariety} of $\mathcal{H}_g(\mu)$ i.e. an immersed subvariety whose image is locally defined by linear equations with real coefficients in cohomological coordinates (see \cite{Wri14}). When $k>2$, a locus $\mathcal{N}$ of holonomy covers of a component of $\Omega^k\mathcal{M}_g(\mu)^\text{prim}$ is never an affine invariant subvariety, but rather a complex linear subvariety i.e. a subvariety locally defined by linear equations with complex coefficients in cohomological coordinates. Indeed, some elements of $GL^+(2,\mathbb{R})$ move surfaces outside the locus.

Wright \cite{Wri15b} proved that any translation surface in $\mathcal{H}_g(\mu)$ whose periods in cohomological coordinates are linearly independent over $\bar{\mathbb{Q}} \cap \mathbb{R}$ has a dense orbit in a component of $\mathcal{H}_g(\mu)$. One might hope the collection $\mathcal{N}$ of holonomy covers of $(1/k)$-translation surfaces for certain $k$ is never locally contained in a non-trivial linear subspace with coefficients in $(\bar{\mathbb{Q}} \cap \mathbb{R})$ in cohomological coordinates. However, for any $k>1$, one can scale and add the linear equations cutting out $\mathcal{N}$ together to form a linear equation with coefficients in $(\bar{\mathbb{Q}} \cap \mathbb{R})$. Hence, more advanced techniques are needed to prove Theorem \ref{theorem_holonomyorbit}.

\subsection{Cylinder deformations} \label{subsection_cyldef}
In this subsection, we briefly discuss the tangent space of affine invariant subvarieties in $\mathcal{H}_g$. Let $\mathcal{M}'$ be any affine invariant subvariety clear from context, and let $H^1_{rel}$ be the natural bundle over $\mathcal{M}'$ whose fiber over a translation surface $(S,\omega)$ is $H^1(S, \Sigma(\omega);\mathbb{C})$. The tangent space $T\mathcal{M}'$ of $\mathcal{M}'$ is a flat subbundle of $H^1_{rel}$. Let $p:H^1(S,\Sigma(\omega);\mathbb{C}) \to H^1(S;\mathbb{C})$ be the natural projection map to absolute cohomology. Define the \textit{rank} of $\mathcal{M}'$ to be $$\text{rank}(\mathcal{M}') := \frac{1}{2}\text{dim}_{\mathbb{C}}p(T_{(S,\omega)}\mathcal{M}')$$ for any surface $(S,\omega) \in \mathcal{M}'$. In particular, $\mathcal{M}'$ has rank $g$ if and only if locally the absolute periods of any $(S,\omega) \in \mathcal{M}'$ are unconstrained.

Apisa-Wright \cite{AW23} proved that an affine invariant subvariety with sufficient rank in a component of a stratum must be the component or holonomy double covers of a stratum of quadratic differentials (after forgetting marked points). 

\begin{theorem}[Apisa-Wright] \label{theorem_highrank}
Let $\mathcal{M}'$ be an affine invariant subvariety without marked points in a stratum $\mathcal{H}_g(\mu)$ with $\emph{rank}(\mathcal{M}') \geq \frac{g}{2} + 1$. Then $\mathcal{M}'$ is either a connected
component of a stratum or the locus of holonomy covers of surfaces in a stratum of half-translation surfaces with forgotten marked points.
\end{theorem}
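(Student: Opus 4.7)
The plan is to prove this as a high-rank rigidity statement by induction on the dimension of $\mathcal{M}'$, with Wright's cylinder deformation theorem as the main tool and the Mirzakhani-Wright boundary as the inductive mechanism. The base case is $\mathrm{rank}(\mathcal{M}') = g$ (full rank), which is already handled by Mirzakhani-Wright's full rank classification: a full-rank affine invariant subvariety is either a connected component of a stratum or a locus of holonomy double covers of a stratum of quadratic differentials. So the real content is pushing the bound $\mathrm{rank}(\mathcal{M}') \geq g/2 + 1$ all the way up to full rank by a degeneration argument.

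First I would exploit the rank hypothesis through cylinder deformations. For a generic surface $(S,\omega) \in \mathcal{M}'$, every equivalence class of $\mathcal{M}'$-parallel cylinders gives both a shear and a twist tangent to $\mathcal{M}'$. Since the absolute cohomology projection $p(T\mathcal{M}')$ has complex dimension at least $g+2$, there are many independent absolute periods to work with, which in turn forces many cylinders in any horizontally periodic generic surface to lie in large $\mathcal{M}'$-equivalence classes. Using the Cylinder Proportion Lemma, I would argue that any such equivalence class has a controlled intersection pattern: either it fills the surface in a way incompatible with a proper subvariety, or it picks out a distinguished involution.

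The main step is an induction via boundary degeneration. I would collapse a well-chosen cylinder equivalence class (shrinking heights to zero) to land in a boundary affine invariant subvariety $\mathcal{M}''$ inside some stratum $\mathcal{H}_{g''}(\mu'')$ of strictly smaller dimension. The key technical lemma is that the rank bound is preserved on the boundary: $\mathrm{rank}(\mathcal{M}'') \geq g''/2 + 1$. By induction, $\mathcal{M}''$ is a component of a stratum or a holonomy double cover locus; I would then show that this ``rigid'' boundary structure lifts back to force the same structure on $\mathcal{M}'$, for instance by showing that a generic surface in $\mathcal{M}'$ deforms in all directions compatible with being the generic element of a stratum component (case 1) or carrying a $\mathbb{Z}/2$-involution that makes it the holonomy double cover of a half-translation surface (case 2). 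To produce the involution in case 2, one reads it off the combinatorics of cylinder equivalence classes on the boundary and globalizes it via Filip's algebraicity of $\mathcal{M}'$.

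The hardest part, and the place where the bound $g/2 + 1$ is really used, will be controlling the behavior of rank under degeneration: collapsing cylinders can drop both $g$ and $\mathrm{rank}$, and one must argue that the bound $\mathrm{rank} \geq g/2 + 1$ is strictly preserved rather than just weakened. This is where the proof becomes delicate, requiring careful analysis of how vanishing cycles interact with $p(T\mathcal{M}')$ and an understanding of which cylinder collapses are ``safe.'' A secondary subtlety is the \emph{no marked points} hypothesis: marked points can be generated or destroyed by collapses, and one must forget them at intermediate steps and recover the correct stratum at the end, being careful that the ``unmarked locus of holonomy covers'' conclusion is stated at the right level.
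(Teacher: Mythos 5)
First, a point of context: the paper you are working against does not prove this statement at all — it is quoted verbatim from Apisa--Wright \cite{AW23} and used as a black box, so there is no internal proof to compare with. Judged on its own terms, your proposal correctly identifies the broad architecture that the actual Apisa--Wright proof uses (induction through a boundary/degeneration mechanism, with the Mirzakhani--Wright full rank theorem as the terminal case and the Cylinder Deformation Theorem as the engine), but as written it is a roadmap rather than a proof: every step that carries the real weight is asserted, not established.

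Concretely, three gaps. (1) The claim that one can always find a cylinder equivalence class whose collapse lands in a boundary invariant subvariety $\mathcal{M}''$ with $\mathrm{rank}(\mathcal{M}'') \geq g''/2 + 1$ is exactly the hard part, and you acknowledge but do not address it; collapsing cylinders can kill absolute periods and drop rank faster than genus, and controlling this requires the full machinery of the boundary theory for invariant subvarieties (which degenerations stay ``high rank,'' how vanishing cycles sit relative to $p(T\mathcal{M}')$), none of which is sketched. (2) The ``lifting back'' step — deducing that $\mathcal{M}'$ itself is a stratum component or a holonomy double cover locus from the corresponding statement for one boundary degeneration — does not follow from a single degeneration; in the published argument this reconstruction requires comparing two different degenerations of the same surface (a ``diamond''-type lemma), and without some such mechanism your induction does not close: a proper subvariety can perfectly well have a boundary piece equal to a full stratum component. (3) In the double-cover case, producing a global involution from ``the combinatorics of cylinder equivalence classes on the boundary'' plus Filip's algebraicity is not an argument; recognizing a locus of quadratic-differential covers requires identifying the covering map on a generic surface, which again is where substantial work (on marked points, periodic points, and covering constructions) enters. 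So the proposal is a plausible outline consistent with the known proof strategy, but the statement remains unproven by it.
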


When an affine invariant subvariety (with possibly marked points) satisfies the rank assumption of Theorem \ref{theorem_highrank}, it is called \textit{high rank}. If its rank is equal to $g$, then it is called \textit{full rank}. Mirzakhani-Wright \cite{MW18} proved that full rank affine invariant subvarieties are trivial.

\begin{theorem}[Mirzakhani-Wright] \label{theorem_fullrank}
Let $\mathcal{M}'$ be a full rank affine invariant subvariety without marked points. Then $\mathcal{M}'$ is either a connected component of a stratum, or a hyperelliptic locus therein.
\end{theorem}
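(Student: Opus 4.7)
The plan is to reduce the classification of full rank affine invariant subvarieties to an analysis of cylinder structures on horizontally periodic surfaces, exploiting the exceptional flexibility given by the full rank hypothesis. My first observation is that full rank means the projection $p : T_{(S,\omega)}\mathcal{M}' \to H^1(S;\mathbb{C})$ is surjective for every $(S,\omega) \in \mathcal{M}'$. Consequently, $T\mathcal{M}'$ is determined inside $H^1_{rel}$ by the subspace of rel deformations it contains: $\mathcal{M}'$ is a connected component of its ambient stratum if and only if every rel deformation is permitted, so the task reduces to classifying the possible linear obstructions in $\ker p$.

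Next, I would invoke the Smillie--Weiss theorem to pass to a horizontally periodic representative $(S,\omega) \in \mathcal{M}'$ and apply Wright's cylinder deformation theorem, which asserts that the standard shears along a union of horizontal cylinders stay tangent to $\mathcal{M}'$ precisely when the cylinders are grouped by the $\mathcal{M}'$-equivalence relation. Full rank forces this equivalence relation to be extremely rigid: if all horizontal cylinders lie in a single equivalence class, a further argument using the density of horizontally periodic surfaces in $\mathcal{M}'$, combined with the unconstrained absolute periods, should imply $T\mathcal{M}' = H^1_{rel}$, so that $\mathcal{M}'$ is a stratum component. Otherwise, the non-trivial equivalence classes must encode additional geometric structure on $(S,\omega)$.

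In the non-trivial case, the goal is to produce a hyperelliptic involution. I would use the boundary theory developed by Mirzakhani--Wright to examine how $\mathcal{M}'$ degenerates under shrinking cylinder classes: degenerations yield affine invariant subvarieties of lower complexity in multi-component boundary strata, and tracking the full rank condition across these limits sets up an induction on genus and the number of cylinder classes. The non-trivial $\mathcal{M}'$-equivalence classes should then assemble into an involution $\sigma$ of $(S,\omega)$ satisfying $\sigma^*\omega = -\omega$ and exchanging paired cylinders; full rank, combined with a tangent space dimension count, should upgrade $\sigma$ to a hyperelliptic involution and force $\mathcal{M}'$ to be the entire hyperelliptic locus rather than some proper subvariety thereof.

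The principal obstacle will be the third step: converting a combinatorial symmetry on cylinder equivalence classes into a globally defined holomorphic involution of $S$, and simultaneously ruling out intermediate possibilities in which rel constraints come from neither a full stratum nor a single hyperelliptic symmetry. This requires careful use of the boundary theory to ensure that involutions produced on degenerate limits glue consistently across boundary components, and to exclude exotic cylinder partitions in the induction. A secondary subtlety concerns the no-marked-points hypothesis: since rel cohomology is sensitive to differences of preimages of marked points, one must verify that collapsing cylinders does not introduce spurious marked-point constraints which would evade the inductive hypothesis, and this is where the careful tracking of $\Sigma(\omega)$ throughout the degeneration procedure becomes essential.
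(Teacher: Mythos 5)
This statement is not proved in the paper at all: it is quoted verbatim from Mirzakhani--Wright \cite{MW18} as a black box in the preliminaries, so there is no internal proof to compare against. Your proposal therefore has to stand on its own as a proof of the Mirzakhani--Wright theorem, and as written it does not: it is a strategy outline in which every decisive step is flagged with ``should.'' The two places where the actual content of the theorem lives --- (a) showing that the $\mathcal{M}'$-parallelism constraints on a full rank proper $\mathcal{M}'$ assemble into a globally defined involution $\sigma$ with $\sigma^*\omega=-\omega$, and (b) showing that $\mathcal{M}'$ is then the \emph{entire} hyperelliptic locus and that no intermediate rel-constrained loci exist --- are exactly the steps you defer to ``a further argument'' and name as the principal obstacle. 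That is the theorem itself, not a technical verification; the published proof devotes most of a paper to it, using the Mirzakhani--Wright boundary theory in an essential and delicate way (tracking how tangent spaces and the full rank condition behave under cylinder collapse, and an induction through boundary strata). Gesturing at that machinery without specifying how full rank is preserved under the degenerations you perform, what the base case is, or how the involutions on boundary components are promoted to one on the original surface leaves the argument with no actual content at the crux.

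There is also a concrete misstep in your second paragraph: you claim that if all horizontal cylinders of a horizontally periodic surface lie in a single $\mathcal{M}'$-parallel equivalence class, then one should be able to conclude $T\mathcal{M}'=H^1_{rel}$. This is backwards. A single equivalence class is a strong \emph{constraint}: the Cylinder Deformation Theorem then only guarantees the one tangent direction $\sum_j h_j I(\alpha_j)$ (and its multiples), which is far from spanning the rel directions. It is when cylinders are \emph{free} (each its own class) that cylinder twists produce many independent tangent vectors, and even then one needs horizontally periodic surfaces whose core curves are rich enough (e.g.\ spanning a Lagrangian) to fill out rel. Relatedly, your reduction of the problem to ``classifying linear obstructions in $\ker p$'' is imprecise: full rank gives $p(T\mathcal{M}')=H^1(S;\mathbb{C})$, but the defining equations of $\mathcal{M}'$ may couple relative periods to absolute ones, so they are not literally functionals on $\ker p$; part of the work in the genuine proof is precisely to control this coupling. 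In short, the skeleton you describe points at the right toolbox (cylinder deformations, Smillie--Weiss, boundary theory, induction), but the proposal does not constitute a proof, and one of its intermediate claims is false as stated.
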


Recall that the one-parameter subgroup \[
u_t=
  \begin{bmatrix}
    1 & t  \\
    0 & 1   \end{bmatrix}
 \subset GL^+(2, \mathbb{R})\] shears polygons in the plane. Let $\mathcal{C}$ be a collection of parallel cylinders on $(S,\omega)$ pointing in direction $\theta$. We define the \textit{cylinder shear} $u^\mathcal{C}_t(S,\omega)$ to be the surface obtained by rotating $(S,\omega)$ by $-\theta$ so that the cylinders in $\mathcal{C}$ are pointing in the positive horizontal direction, applying $u_t$ to the cylinders in $\mathcal{C}$, and rotating the resulting surface back by $\theta.$ 

 Recall the Poinc\'are isomorphism $$H_1(S-\Sigma(\omega);\mathbb{C}) \cong H^1(S,\Sigma(\omega);\mathbb{C})$$ which is given by the intersection number. If $\alpha$ is a closed curve on $S$, let $I(\alpha) \in H^1(S, \Sigma(\omega);\mathbb{Z})$ denote the dual of its class in $H_1(S-\Sigma(\omega);\mathbb{Z})$. If $\alpha_j$ and $h_j$ are a core curve and the height of a cylinder $C_j \in \mathcal{C}$, the derivative of $u^\mathcal{C}_t$ at $(S,\omega)$ is $u^\mathcal{C} := e^{i\theta}\sum_{j=1}^mh_jI(\alpha_j).$ Similarly, there is a deformation of $(S,\omega)$ called a \textit{cylinder stretch} whose derivative is $i u^\mathcal{C}$ which stretches, rather than shears, the cylinders in $\mathcal{C}$.

The Cylinder Deformation Theorem \cite{Wri15a} stated below describes deformations of surfaces remaining in $\mathcal{M}'$ obtained by shearing and stretching cylinders. Given any (not necessarily affine invariant) subvariety $\mathcal{M}'$, a collection of cylinders $\mathcal{C}$ on $(S,\omega) \in \mathcal{M}'$ are said to be $\mathcal{M}'$-\textit{parallel}, or in the same $\mathcal{M}'$-parallel equivalence class, if their core curves remain parallel on a sufficiently small neighborhood of $(S,\omega)$ in $\mathcal{M}'$. 

\begin{theorem}[Wright] \label{theorem_cyldefthm}
    \emph{(The Cylinder Deformation Theorem)} Let $\mathcal{M}'$ be an affine invariant subvariety containing $(S,\omega)$, and let $\mathcal{C}$ be a full equivalence class of $\mathcal{M}'$-parallel cylinders on $(S,\omega)$. Then, for all sufficiently small $t \in \mathbb{R}$, the surface $u^{\mathcal{C}}_t(S,\omega)$ remains in $\mathcal{M}'$. In particular, if $C_j$ in $\mathcal{C}$ has core curve $\alpha_j$ and height $h_j$, then $\lambda \sum_{j=1}^mh_jI(\alpha_j) \in T_{(S,\omega)}\mathcal{M}'$ for any $\lambda \in \mathbb{C}$. 
\end{theorem}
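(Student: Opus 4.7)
The plan is to reduce everything to showing that the single cohomology class
$$\xi_{\mathcal{C}} := \sum_{j=1}^{m} h_j I(\alpha_j)$$
lies in $T_{(S,\omega)}\mathcal{M}'$. The assertion that $u^{\mathcal{C}}_t(S,\omega) \in \mathcal{M}'$ for small $t$ then follows by integrating the flow and invoking closedness of $\mathcal{M}'$, while the upgrade from real to complex scalars $\lambda \in \mathbb{C}$ comes from the fact that $T_{(S,\omega)}\mathcal{M}' \subset H^1(S,\Sigma(\omega);\mathbb{C})$ is a complex subspace --- equivalently, by pairing the shear $u^{\mathcal{C}}_t$ with the analogous cylinder stretch generated by the diagonal subgroup of $GL^+(2,\mathbb{R})$ to obtain the orthogonal direction $i \xi_{\mathcal{C}}$.

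After rotating by $-\theta$ via the $SO(2) \subset GL^+(2,\mathbb{R})$ action (which preserves $\mathcal{M}'$), I would assume the cylinders in $\mathcal{C}$ are horizontal. The whole-surface shear $u_t$ preserves $\mathcal{M}'$ by $GL^+(2,\mathbb{R})$-invariance, and a local computation in period coordinates shows that its derivative at $t=0$ equals
$$\xi_{\mathrm{tot}} := \sum_{D} \mathrm{height}(D)\, I(\mathrm{core}(D)),$$
summed over \emph{every} horizontal cylinder $D$ on $(S,\omega)$. Hence $\xi_{\mathrm{tot}} \in T_{(S,\omega)}\mathcal{M}'$. The horizontal cylinders partition into full $\mathcal{M}'$-parallel equivalence classes $\mathcal{C} = \mathcal{C}_1, \mathcal{C}_2, \ldots, \mathcal{C}_N$, giving a decomposition $\xi_{\mathrm{tot}} = \xi_{\mathcal{C}_1} + \cdots + \xi_{\mathcal{C}_N}$; the remaining task is to show each summand lies in $T_{(S,\omega)}\mathcal{M}'$ individually.

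The heart of the argument is that the ``$\mathcal{M}'$-parallel'' relation is, locally, exactly the obstruction encoded in the real linear equations defining $\mathcal{M}'$: two horizontal cylinders lie in the same class precisely when those equations force their core-curve periods to stay proportional under every deformation in $\mathcal{M}'$. Consequently any real linear functional that vanishes on $T_{(S,\omega)}\mathcal{M}'$ must already vanish on the span of each single equivalence class separately --- for otherwise pairing that functional against $\xi_{\mathrm{tot}}$ versus against the individual $\xi_{\mathcal{C}_k}$ would produce a nontrivial real-linear relation coupling distinct classes, contradicting the fullness of $\mathcal{C}$. This forces $\xi_{\mathcal{C}_k} \in T_{(S,\omega)}\mathcal{M}'$ for each $k$, in particular for $\mathcal{C} = \mathcal{C}_1$.

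I expect the main obstacle to be making the last implication rigorous: translating ``distinct equivalence classes can be perturbed independently within $\mathcal{M}'$'' into an honest statement about the defining linear equations. The cleanest route is probably to produce, for each $k \neq 1$, an explicit family of deformations in $\mathcal{M}'$ that shears the cylinders of $\mathcal{C}_k$ alone while fixing a reference direction on $\mathcal{C}$, then take appropriate $\mathbb{R}$-linear combinations with $\xi_{\mathrm{tot}}$; verifying that these partial-shear perturbations actually lie in $\mathcal{M}'$ is precisely where the full force of the affine invariant subvariety structure --- closedness, $GL^+(2,\mathbb{R})$-invariance, and real linearity of the tangent space in period coordinates --- has to be invoked.
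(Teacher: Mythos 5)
The paper itself gives no proof of this statement---it is imported verbatim from Wright \cite{Wri15a}---so your attempt has to be measured against Wright's actual argument, and it falls short at two points. First, your starting identity is false in general: the derivative of the whole-surface shear $u_t$ at $t=0$ is the class $\mathrm{Im}(\omega)\in H^1(S,\Sigma(\omega);\mathbb{R})$, i.e.\ the functional $\gamma\mapsto \mathrm{Im}\int_\gamma\omega$. This coincides with your $\xi_{\mathrm{tot}}=\sum_D \mathrm{height}(D)\,I(\mathrm{core}(D))$ only when the horizontal foliation is completely periodic; in general there are minimal components, and their contribution to $\mathrm{Im}(\omega)$ is not a combination of cylinder classes. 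So even the membership $\xi_{\mathrm{tot}}\in T_{(S,\omega)}\mathcal{M}'$, with the sum taken over horizontal cylinders only, is not the derivative of anything in the $GL^+(2,\mathbb{R})$-orbit and already contains much of the difficulty of the theorem.

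Second, the step you call the heart of the argument---splitting $\xi_{\mathrm{tot}}$ into the individual summands $\xi_{\mathcal{C}_k}$---is asserted rather than proved. Being $\mathcal{M}'$-parallel constrains the core-curve holonomies of cylinders under deformations in $\mathcal{M}'$; it says nothing a priori about which duals $I(\alpha_j)$ lie in $T_{(S,\omega)}\mathcal{M}'$, and no contradiction with the ``fullness of $\mathcal{C}$'' follows from a functional that vanishes on $T_{(S,\omega)}\mathcal{M}'$ but not on a single $\xi_{\mathcal{C}_k}$: that such a functional cannot exist is precisely the conclusion of the theorem. Wright's proof is not formal linear algebra with the defining equations; it perturbs inside $\mathcal{M}'$ so that distinct equivalence classes cease to be parallel, invokes the Smillie--Weiss theorem that horocycle-flow orbit closures contain horizontally periodic surfaces, and takes limits under the $GL^+(2,\mathbb{R})$-action using closedness of $\mathcal{M}'$ and local constancy of its tangent space in period coordinates. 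Without a substitute for this dynamical input your reduction does not close. The two framing points you do make are fine: once $\xi_{\mathcal{C}}\in T_{(S,\omega)}\mathcal{M}'$ is known, the cylinder shear is a straight line in period coordinates, hence stays in $\mathcal{M}'$ for small $t$; and the upgrade to arbitrary $\lambda\in\mathbb{C}$ holds because a subspace cut out by real-linear equations in complex period coordinates is a complex subspace.
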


For more information on cylinder deformations and the tangent bundle of an affine invariant subvariety in $\mathcal{H}_g$, see \cite{Wri15a}.

\section{Proof of Theorem \ref{theorem_holonomyorbit}} \label{lemmas}

Unless otherwise stated, we will always let $\mathcal{N}$ be a locus of holonomy covers of a component of $\Omega^k\mathcal{M}_g(\mu)^{\text{prim}}$ and $(\hat{X},\hat{\omega}) \in \mathcal{N}$ the holonomy cover of $(X,\xi)$ with $k$-cyclic automorphism $\tau$. We fix a choice of a primitive $k$-th root of unity $\zeta$. Let $\mathcal{M}$ be the smallest affine invariant subvariety containing $\mathcal{N}$.

One can think of $\mathcal{M}$ as the orbit closure of a generic surface in $\mathcal{N}$, as shown below.

\begin{lemma} \label{lemma_NdenseinM}
    Almost every surface in $\mathcal{N}$ has a dense $GL^+(2,\mathbb{R})$-orbit in $\mathcal{M}$.
\end{lemma}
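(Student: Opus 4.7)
The plan is to reduce the lemma to showing that every proper affine invariant sub-locus of $\mathcal{M}$ intersects $\mathcal{N}$ in a set of Lebesgue measure zero. For any $M \in \mathcal{N}$, the Eskin-Mirzakhani-Mohammadi theorem guarantees that $\overline{GL^+(2,\mathbb{R})M}$ is an affine invariant subvariety of $\mathcal{H}_{\hat g}(\hat \mu)$; by the defining minimality of $\mathcal{M}$, it is automatically contained in $\mathcal{M}$. Hence the ``bad set''
\[
E = \{M \in \mathcal{N} : \overline{GL^+(2,\mathbb{R})M} \subsetneq \mathcal{M}\}
\]
is the union of the sets $\mathcal{N} \cap \mathcal{M}'$ taken over proper affine invariant subvarieties $\mathcal{M}' \subsetneq \mathcal{M}$, and I want to show that $E$ has measure zero in $\mathcal{N}$.

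A standard consequence of the Eskin-Mirzakhani-Mohammadi and Filip results is that the collection of affine invariant subvarieties sitting inside any stratum of translation surfaces is countable: each is locally cut out by finitely many linear equations whose coefficients lie in a number field of bounded degree, leaving only countably many possibilities. Granting this, the lemma reduces to the claim that for a single proper $\mathcal{M}' \subsetneq \mathcal{M}$, the intersection $\mathcal{N} \cap \mathcal{M}'$ is a null set in $\mathcal{N}$.

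To this end I exploit the real-analytic structure provided by Theorem \ref{theorem_localcoord}: $\mathcal{N}$ is locally the complex linear subspace $H^1(\hat X, \Sigma(\hat \omega); \mathbb{C})_{\zeta}$ inside cohomological coordinates, hence a smooth real-analytic submanifold of the stratum. It is connected because $K$ is a connected component of $\Omega^k\mathcal{M}_g(\mu)^{\text{prim}}$ and the holonomy-cover map $\mathcal{N} \to K$ is a bijection on underlying points. Since $\mathcal{M}'$ is locally cut out by real-linear equations in cohomological coordinates, the intersection $\mathcal{N} \cap \mathcal{M}'$ is a closed real-analytic subset of $\mathcal{N}$. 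If it had positive Lebesgue measure, analytic continuation across the connected manifold $\mathcal{N}$ would force $\mathcal{N} \cap \mathcal{M}' = \mathcal{N}$, i.e.\ $\mathcal{N} \subseteq \mathcal{M}'$; but then $\mathcal{M}'$ is an affine invariant subvariety containing $\mathcal{N}$, and the defining minimality of $\mathcal{M}$ yields $\mathcal{M} \subseteq \mathcal{M}'$, contradicting $\mathcal{M}' \subsetneq \mathcal{M}$. The principal obstacle is conceptual rather than computational: one needs to verify that the real-analytic/connectedness framework transports cleanly through the orbifold period-coordinate charts, since the underlying inputs (countability of affine invariant subvarieties, real-analyticity of their local defining equations, and the Eskin-Mirzakhani-Mohammadi classification) are all off the shelf.
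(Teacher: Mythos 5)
Your proposal is correct and takes essentially the same route as the paper's proof: countability of proper affine invariant subvarieties inside $\mathcal{M}$ (from Eskin--Mirzakhani--Mohammadi), the local linearity of $\mathcal{N}$ in cohomological coordinates forcing each proper subvariety to meet $\mathcal{N}$ in a null set unless it contains $\mathcal{N}$, and the minimality of $\mathcal{M}$ to rule out the latter. Your handling of the positive-measure-implies-containment step via connectedness and real-analyticity is in fact spelled out more carefully than in the paper's terse version of the same argument.
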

\begin{proof}
Recall $\mathcal{M}$ is an affine invariant subvariety, and there are only countably many proper affine invariant subvarieties contained in $\mathcal{M}$ by \cite{EMM15}. Recall also that $\mathcal{N}$ is a complex linear subvariety. Hence, these countably many proper subvarieties in $\mathcal{M}$ intersect $\mathcal{N}$ at a measure zero subset (with respect to Lebesgue measure on $\mathcal{N}$) or $\mathcal{N}$ is contained in this countable union. If it was the latter, $\mathcal{M}$ is not the smallest affine invariant subvariety containing $\mathcal{N}$ which is a contradiction.

Therefore, the $GL^+(2,\mathbb{R})$-orbit closure of any surface aside from a measure zero set in $\mathcal{N}$ cannot be a proper subvariety in $\mathcal{M}$ and thus is $\mathcal{M}$. 
\end{proof}

The following is Lemma 4.1 and a consequence of Proposition 4.2 in \cite{Ngu22}.

\begin{lemma}[Nguyen] \label{lemma_projection}
    We have the equality $$p(H^1(\hat{X},\Sigma(\hat{\omega});\mathbb{C})_{\zeta}) = H^1(\hat{X};\mathbb{C})_\zeta.$$ Moreover, set $$N:= 2g+n-2 - \emph{card}\{m_1,...,m_n \cap k\mathbb{Z}\}.$$ Then $\emph{dim}(H^1(\hat{X};\mathbb{C})_{\zeta}) = N$ when $k>1.$
\end{lemma}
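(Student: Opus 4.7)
My plan splits along the two displayed assertions. For the projection statement, I would apply the long exact sequence of the pair $(\hat{X},\Sigma(\hat{\omega}))$. Since $\Sigma(\hat{\omega})$ is a finite set, $H^1(\Sigma(\hat{\omega});\mathbb{C})=0$, so the natural map $p\colon H^1(\hat{X},\Sigma(\hat{\omega});\mathbb{C})\twoheadrightarrow H^1(\hat{X};\mathbb{C})$ is surjective. The deck automorphism $\tau$ preserves $\Sigma(\hat{\omega})$, so $p$ is $\tau$-equivariant, and because $\langle\tau\rangle\cong\mathbb{Z}/k$ acts on $\mathbb{C}$-vector spaces, Maschke's theorem guarantees that the $\zeta$-eigenspace functor is exact. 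Hence $p$ restricts to a surjection $H^1(\hat{X},\Sigma(\hat{\omega});\mathbb{C})_\zeta \twoheadrightarrow H^1(\hat{X};\mathbb{C})_\zeta$, which is part (i).

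For the dimension computation, I would apply the topological Lefschetz fixed-point theorem to the automorphisms $\tau^j$ and then Fourier-invert on $\mathbb{Z}/k$. Since $\tau^j$ acts trivially on $H^0$ and $H^2$ for $j\in\{1,\dots,k-1\}$, the trace on $H^1$ equals $2-|\Fix(\tau^j)|$. The cyclic ramification structure of $\pi$ shows that the preimages of a singularity $p_i$ of order $m_i$ form a single $\tau$-orbit of size $\gcd(m_i,k)$ with common stabilizer $\langle\tau^{\gcd(m_i,k)}\rangle\subset\mathbb{Z}/k$, so $\tau^j$ fixes all $\gcd(m_i,k)$ preimages of $p_i$ when $\gcd(m_i,k)\mid j$ and none otherwise. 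Plugging the resulting identity
$$\tr(\tau^j\mid H^1(\hat{X};\mathbb{C})) \;=\; 2 - \sum_{i:\,\gcd(m_i,k)\mid j}\gcd(m_i,k)$$
into the character formula $\dim H^1(\hat{X};\mathbb{C})_\zeta = \tfrac{1}{k}\sum_{j=0}^{k-1}\zeta^{-j}\tr(\tau^j\mid H^1)$ and evaluating the partial geometric sums $\sum_{j:\,s\mid j}\zeta^{-j}$ (which vanish unless $s=k$) leaves an expression involving only $\hat{g}$ and the indices $i$ with $k\mid m_i$. Substituting Riemann--Hurwitz, $2\hat{g}-2 = 2k(g-1) + kn - \sum_i\gcd(m_i,k)$, then collapses this to $2g+n-2-|\{i:m_i\in k\mathbb{Z}\}|=N$.

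The main obstacle is the character bookkeeping in the Fourier inversion step: one must evaluate the partial geometric sums indexed by the subgroup lattice of $\mathbb{Z}/k$ and verify that the ramified contributions (those with $\gcd(m_i,k)<k$) combine cleanly with the leading $2\hat{g}$ term and the scalar $\sum_{j\neq 0}2\zeta^{-j}=-2$ so that every extraneous term cancels. The hypothesis $k>1$ enters precisely here, since it ensures $\zeta\neq 1$ so that these geometric sums are non-trivial; the remaining inputs (the long exact sequence, Lefschetz, and Riemann--Hurwitz) are standard.
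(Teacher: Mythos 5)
Your proposal is correct, and it is worth noting that the paper itself gives no argument here: it simply cites Nguyen (\cite[Lemma 4.1, Proposition 2.4]{Ngu22}), so what you have written is a genuine self-contained proof rather than a reproduction of the paper's. Part (i) is exactly right: the long exact sequence of the pair gives surjectivity of $p$ because $H^1(\Sigma(\hat{\omega});\mathbb{C})=0$, $p$ is $\tau^*$-equivariant since $\tau$ preserves $\Sigma(\hat{\omega})$, and applying the projector $\frac{1}{k}\sum_j \zeta^{-j}(\tau^*)^j$ (semisimplicity of $\mathbb{C}[\mathbb{Z}/k]$) to the surjection yields $p(H^1(\hat{X},\Sigma(\hat{\omega});\mathbb{C})_\zeta)=H^1(\hat{X};\mathbb{C})_\zeta$. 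Part (ii) is a Chevalley--Weil/Eichler-type computation and the bookkeeping does close up: nontrivial deck transformations act freely off the branch locus, so $\Fix(\tau^j)$ consists precisely of the preimages of singularities $p_i$ with $\gcd(m_i,k)\mid j$, each fixed point of a finite-order holomorphic map has Lefschetz index $1$ (its derivative is a root of unity $\neq 1$), and the geometric sums $\sum_{t}(\zeta^{-\gcd(m_i,k)})^t$ vanish unless $\gcd(m_i,k)=k$, giving
\begin{equation*}
\dim H^1(\hat{X};\mathbb{C})_\zeta=\frac{1}{k}\Bigl(2\hat{g}-2+\sum_{i:\,k\nmid m_i}\gcd(m_i,k)\Bigr),
\end{equation*}
which Riemann--Hurwitz ($2\hat{g}-2=2k(g-1)+kn-\sum_i\gcd(m_i,k)$) reduces to $2g+n-2-\mathrm{card}\{i:m_i\in k\mathbb{Z}\}=N$; the hypothesis $k>1$ enters exactly where you say, via $\zeta\neq 1$. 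The only points you should make explicit in a final write-up are the two facts you use implicitly: that fixed points of $\tau^j$ lie only over singularities (freeness of the deck action on the unramified part), and that each such fixed point is nondegenerate of index $1$; with those stated, the argument is complete and independent of \cite{Ngu22}.
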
 

\begin{remark}
If $\alpha$ is a closed curve on a surface $(S,\omega)$, we may consider it as a class in $H_1(S;\mathbb{C})$ and will denote its dual (under Poinc\'are duality in absolute homology) as $\alpha^* \in H^1(S;\mathbb{C})$.
In fact, one can consider $\alpha$ to be a class in both $H_1(S - \Sigma(\omega);\mathbb{C})$ and $H_1(S;\mathbb{C})$, and without changing notation for one or the other, it follows that $p(I(\alpha)) = \alpha^*$. 
\end{remark}

\begin{remark}
Given the automorphism $\tau$ on $(\hat{X},\hat{\omega})$, we also slightly abuse the notation of $\tau_*$ (or $\tau^*$) by allowing it to act as an induced action on both absolute and relative (co-)homology classes. Let $H_1(\hat{X};\mathbb{C})_{\zeta^\ell}$ (or $H^1(\hat{X};\mathbb{C})_{\zeta^\ell}$) denote the $\zeta^\ell$-eigenspace of $\tau_*$ (or $\tau^*$) acting on the absolute (co-)homology of $\hat{X}$.
\end{remark}

\begin{lemma} \label{lemma_zetaeigenspace}
    At any $(\hat{X},\hat{\omega}) \in \mathcal{N}$, $p(T_{(\hat{X},\hat{\omega})}\mathcal{M})$ contains the eigenspaces $H^1(\hat{X};\mathbb{C})_{\zeta}$ and $H^1(\hat{X};\mathbb{C})_{\bar{\zeta}}$. 
\end{lemma}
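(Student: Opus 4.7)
The plan is to combine the inclusion $\mathcal{N} \subset \mathcal{M}$, the identification of $T\mathcal{N}$ from Theorem \ref{theorem_localcoord}, and the basic fact that the tangent space to an affine invariant subvariety is defined over $\mathbb{R}$ in period coordinates (and in particular closed under complex conjugation with respect to the real structure on cohomology). With this in hand, the inclusion of the $\zeta$-eigenspace is essentially immediate from what is already stated in the paper, and the $\bar{\zeta}$-eigenspace then drops out by conjugation.

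First I would note that since $\mathcal{N} \subset \mathcal{M}$, the tangent spaces satisfy $T_{(\hat{X},\hat{\omega})}\mathcal{N} \subset T_{(\hat{X},\hat{\omega})}\mathcal{M}$. By Theorem \ref{theorem_localcoord}, the left-hand side is exactly $H^1(\hat{X}, \Sigma(\hat{\omega});\mathbb{C})_{\zeta}$. Applying the projection $p$ to absolute cohomology and invoking Lemma \ref{lemma_projection} immediately yields
\[
H^1(\hat{X};\mathbb{C})_{\zeta} \;=\; p\bigl(H^1(\hat{X}, \Sigma(\hat{\omega});\mathbb{C})_{\zeta}\bigr) \;\subset\; p\bigl(T_{(\hat{X},\hat{\omega})}\mathcal{M}\bigr),
\]
which handles the first eigenspace with no further work.

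For the $\bar{\zeta}$-eigenspace I would use the real structure. Because $\mathcal{M}$ is affine invariant, its tangent space in period coordinates is cut out by $\mathbb{R}$-linear equations in $H^1(\hat{X}, \Sigma(\hat{\omega});\mathbb{R})$ (see \cite{Wri14}), so $T_{(\hat{X},\hat{\omega})}\mathcal{M}$ is the complexification of a real subspace and is invariant under the conjugation $v \mapsto \bar{v}$ determined by the real structure. Since $\tau$ is a homeomorphism, $\tau^*$ preserves the integral lattice and hence commutes with this conjugation, so conjugation interchanges the $\zeta$- and $\bar{\zeta}$-eigenspaces of $\tau^*$. Conjugating the inclusion $H^1(\hat{X}, \Sigma(\hat{\omega});\mathbb{C})_{\zeta} \subset T_{(\hat{X},\hat{\omega})}\mathcal{M}$ therefore gives $H^1(\hat{X}, \Sigma(\hat{\omega});\mathbb{C})_{\bar{\zeta}} \subset T_{(\hat{X},\hat{\omega})}\mathcal{M}$. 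Projecting and using that $p$ is defined over $\mathbb{Z}$ (so commutes with conjugation) upgrades Lemma \ref{lemma_projection} to $p\bigl(H^1(\hat{X}, \Sigma(\hat{\omega});\mathbb{C})_{\bar{\zeta}}\bigr) = H^1(\hat{X};\mathbb{C})_{\bar{\zeta}}$, and thus $H^1(\hat{X};\mathbb{C})_{\bar{\zeta}} \subset p(T_{(\hat{X},\hat{\omega})}\mathcal{M})$.

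There is no significant obstacle here; the only conceptual point to be careful about is the asymmetry between $\mathcal{N}$ and $\mathcal{M}$. The locus $\mathcal{N}$ is only a complex linear subvariety (cut out by $\mathbb{Q}(\zeta)$-linear equations, as emphasized in the discussion preceding the lemma), whereas the affine invariant hull $\mathcal{M}$ must be defined over $\mathbb{R}$. It is precisely this forced enlargement from $\mathbb{Q}(\zeta)$ to $\mathbb{R}$ that automatically brings in the conjugate eigenspace, and this observation is the engine of the whole argument.
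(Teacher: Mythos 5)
Your proposal is correct and follows essentially the same route as the paper: the $\zeta$-eigenspace inclusion comes from $T\mathcal{N}\subset T\mathcal{M}$ together with Theorem \ref{theorem_localcoord} and Lemma \ref{lemma_projection}, and the $\bar{\zeta}$-eigenspace comes from the $\mathbb{R}$-linearity of $\mathcal{M}$ plus the fact that conjugation swaps the $\zeta$- and $\bar{\zeta}$-eigenspaces since $\tau^*$ is an integral operator. The only cosmetic difference is that you conjugate in relative cohomology before projecting, while the paper conjugates in $p(T\mathcal{M})$ after projecting; both are valid.
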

\begin{proof}
Recall in Theorem \ref{theorem_localcoord} that $H^1(\hat{X},\Sigma(\hat{\omega});\mathbb{C})_{\zeta}$ is the tangent space of $\mathcal{N}$ at $(\hat{X},\hat{\omega})$. Because $\mathcal{M}$ contains $\mathcal{N}$, $T_{(\hat{X},\hat{\omega})}\mathcal{M}$ contains $H^1(\hat{X},\Sigma(\hat{\omega});\mathbb{C})_{\zeta}$. Lemma \ref{lemma_projection} then implies $H^1(\hat{X};\mathbb{C})_{\zeta} \subset p(T_{(\hat{X},\hat{\omega})}\mathcal{M}).$ Recall $\mathcal{M}$ is defined locally by linear equations with real coefficients. Hence, $p(T_{(\hat{X},\hat{\omega})}\mathcal{M})$ has a basis in $H^1(\hat{X};\mathbb{R})$, so any element in $p(T_{(\hat{X},\hat{\omega})}\mathcal{M})$ has its conjugate in $p(T_{(\hat{X},\hat{\omega})}\mathcal{M}).$ If $\tau^*v = \bar{\zeta} v$, then $\tau^*\bar{v} = \zeta \bar{v}$ because $\tau^*$ is a real (integral) operator. Thus, all vectors in $H^1(\hat{X};\mathbb{C})_{\bar{\zeta}}$ can be obtained by conjugating some element in $H^1(\hat{X};\mathbb{C})_{\zeta}$. We conclude $H^1(\hat{X};\mathbb{C})_{\bar{\zeta}} \subset p(T_{(\hat{X},\hat{\omega})}\mathcal{M})$.
\end{proof}

\begin{lemma} \label{lemma_parallelsaddle}
On almost every $(\hat{X},\hat{\omega}) \in \mathcal{N}$, any two parallel cylinder core curves project to hat homologous curves on $(X,\xi)$. 
\end{lemma}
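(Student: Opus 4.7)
The plan is to argue that on almost every $(\hat{X},\hat{\omega}) \in \mathcal{N}$, parallelism of cylinder cores on the cover is a \emph{rigid} condition — one that persists under all deformations in $\mathcal{N}$ — and that rigidity translates, via cohomological coordinates, exactly to the hat homology relation on $(X,\xi)$.

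First I would fix integral classes $A, B \in H_1(\hat{X}; \mathbb{Z})$ and work on the open subset $U_{A,B} \subseteq \mathcal{N}$ on which both are represented by cylinder core curves. By Theorem \ref{theorem_localcoord} cohomological coordinates identify tangent directions in $\mathcal{N}$ with $H^1(\hat{X},\Sigma(\hat{\omega});\mathbb{C})_\zeta$, and by Lemma \ref{lemma_projection} the projection to absolute cohomology surjects onto $H^1(\hat{X};\mathbb{C})_\zeta$. Since distinct $\tau_*/\tau^*$-eigenspaces annihilate each other under the Kronecker pairing while the full pairing is perfect, the pairing restricts to a perfect pairing between $H_1(\hat{X};\mathbb{C})_\zeta$ and $H^1(\hat{X};\mathbb{C})_\zeta$. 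Under this duality, the period functionals $\hat{\omega} \mapsto \int_A\hat{\omega}$ and $\hat{\omega} \mapsto \int_B\hat{\omega}$ on the local coordinates are represented respectively by the $\zeta$-projections
$$A_\zeta := \tfrac{1}{k}\sum_{j=0}^{k-1} \bar{\zeta}^{j}\tau^j_*(A), \qquad B_\zeta := \tfrac{1}{k}\sum_{j=0}^{k-1} \bar{\zeta}^{j}\tau^j_*(B),$$
viewed in $H_1(\hat{X};\mathbb{C})_\zeta$: periods of closed loops only see the absolute part of $\hat{\omega}$, and only the $\zeta$-part of the loop's homology class pairs non-trivially with $\hat{\omega} \in H^{1}(\hat{X};\mathbb{C})_\zeta$.

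Next I would establish a rigidity dichotomy on $U_{A,B}$. The locus where $A$ and $B$ are parallel is cut out by the real-analytic equation $\mathrm{Im}\bigl(\int_A\hat{\omega}\cdot \overline{\int_B\hat{\omega}}\bigr) = 0$. For two non-zero $\mathbb{C}$-linear functionals $f,g$ on a complex vector space, this equation holds identically if and only if $f=rg$ for some $r\in\mathbb{R}$, and otherwise defines a proper real-analytic (hence measure zero) subset. Cylinder cores have non-zero holonomy, so neither $A_\zeta$ nor $B_\zeta$ vanishes in $H_1(\hat{X};\mathbb{C})_\zeta$; thus the parallel locus either equals all of $U_{A,B}$ — precisely when $A_\zeta = r B_\zeta$ for some $r \in \mathbb{R}$, by the perfect pairing above — or has Lebesgue measure zero in $\mathcal{N}$. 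In the rigid case, multiplying by $k$ gives
$$\sum_{j=0}^{k-1}\bar{\zeta}^{j}\tau^j_*(A) = r \sum_{j=0}^{k-1}\bar{\zeta}^{j}\tau^j_*(B),$$
which is precisely the hat homology relation for the projections $\pi(A)$ and $\pi(B)$. Taking a countable union over pairs $(A,B) \in H_1(\hat{X};\mathbb{Z})^2$ exhausts the measure-zero bad locus and finishes the proof.

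The hard part will be the rigidity dichotomy itself: one must verify carefully that the perfect pairing on $\zeta$-eigenspaces correctly promotes the pointwise statement ``periods are in real ratio'' to the structural statement ``$\zeta$-projected homology classes are in real ratio,'' and keep track of the passage from the relative cohomology governing coordinates on $\mathcal{N}$ to the absolute cohomology governing periods of closed loops. The countable-union argument, the identification of cohomological coordinates, and the reduction to cylinder cores themselves are essentially bookkeeping.
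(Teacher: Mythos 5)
Your argument is correct and is essentially the paper's proof, made more explicit: the period of a core curve equals the pairing of the coordinate class with its $\zeta$-projected homology class, so unless the two projections are real-proportional (the hat homology relation), the parallelism locus is a proper real-analytic, hence measure-zero, subset in cohomological coordinates, and one concludes by a countable union. The only difference is cosmetic bookkeeping --- you index the countable union by pairs of integral homology classes where the paper cites the countability of cylinders, and you phrase the relation in absolute rather than relative homology, which is equivalent here since $H_1(\hat{X};\mathbb{C})\to H_1(\hat{X},\Sigma(\hat{\omega});\mathbb{C})$ is injective.
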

\begin{proof}
The period of a core curve $\hat{\gamma} \subset \hat{X}$ is also the period of $\frac{1}{k} \sum_{j=1}^k \bar{\zeta}^{j}\tau^j_*([\hat{\gamma}]) \in H_{1}(\hat{X}, \Sigma(\hat{\omega});\mathbb{C})_{\zeta}$. Recall that local coordinates for $\mathcal{N}$ are given by the periods of some basis in $H_1(\hat{X}, \Sigma(\hat{\omega});\mathbb{C})_{\zeta}$. If two core curves $\hat{\gamma}$ and $\hat{\gamma}'$ are not hat homologous downstairs, i.e. do not satisfy $$\frac{1}{k}\sum_{j=1}^k \bar{\zeta}^{j}\tau^j_*([\hat{\gamma}]) = \frac{r}{k} \sum_{j=1}^k \bar{\zeta}^{j}\tau^j_*([\hat{\gamma}'])$$ for some $r \in \mathbb{Q}(\zeta) \cap \mathbb{R}$, then the locus in which their periods all have real ratios is of real codimension one in $\mathcal{N}$. Since there are always countably many cylinders on any given translation surface, there are only countably many real codimension one loci where there are at least two parallel cylinder core curves which are not hat homologous downstairs. 
\end{proof}

Consider a stratum of translation surfaces possibly with marked points $\mathcal{H}_g(\mu)$, and let $\mathcal{F}:\mathcal{H}_g(\mu) \to  \mathcal{H}_g(\mu')$ be the map which forgets the marked points. Given a surface or subvariety, we call it \textit{unmarked} after applying $\mathcal{F}$. Forgetting marked points does not change the image of the tangent space under $p$ nor consequently the rank. However, it does allow us to discuss orbit closures without worrying about constraints on marked points. Understanding the orbit closure of an unmarked surface helps us understand how freely marked points may move around inside the closure. For that reason, we will classify $\mathcal{F(M)}$ first and then deal with marked points afterwards. We emphasize that we use $\mathcal{F(M)}$ at times rather than $\mathcal{M}$ as a precaution; certain papers we cite do not explicitly involve marked points in their context.

The remainder of this section splits into two parts: showing that $\mathcal{F}(\mathcal{M})$ is arithmetic (Section \ref{subsection_arithmetic}) and then classifying $\mathcal{M}$ (Section \ref{subsection_classification}). Lemma \ref{lemma_eigenspacefullrank} highlights why the second part relies greatly on first showing $\mathcal{F}(\mathcal{M})$ is arithmetic. From there, determining $\mathcal{M}$ is high rank is quick (Lemma \ref{lemma_Mhighrank}), which consequently constrains $\mathcal{M}$ to be a whole component or double covers of a stratum of quadratic differentials (ignoring the marked points). Later in Section \ref{subsection_classification}, we show the latter can only happen when the component of $(1/k)$-translation surfaces is hyperelliptic, and $\mathcal{M}$ in this case is a hyperelliptic locus itself. 

\subsection{Arithmeticity of $\mathcal{F}(\mathcal{M})$} \label{subsection_arithmetic}
This subsection takes the first, and perhaps most significant, step to prove Theorem \ref{theorem_holonomyorbit}: showing that $\mathcal{F(M)}$ is arithmetic. The \textit{field of definition} $\textbf{k}(\mathcal{M}')$ of an affine invariant subvariety $\mathcal{M}'$ is the smallest subfield of $\mathbb{R}$ for which $\mathcal{M}'$ can be locally defined by linear equations (in cohomological coordinates) with coefficients in this field. An affine invariant subvariety $\mathcal{M}'$ is \textit{arithmetic} if $\textbf{k}(\mathcal{M}') = \mathbb{Q}$. We will show a Euclidean cylinder, i.e. a cylinder with simple core curves, on any surface in $K$ implies $\mathcal{F(M)}$ is arithmetic. Then, we prove the existence of a Euclidean cylinder on some surface in $K$ when $g>2$.

Let $\hat{\iota}(\_,\_)$ denote the intersection form between two absolute homology classes and/or closed curve representatives on a surface. 

\begin{lemma} \label{lemma_sameintersectionsimple}
    Suppose that $k$ is prime and $\hat{C}$ is a cylinder on $(\hat{X},\hat{\omega})$ such that $\pi(\hat{C})$ is a Euclidean cylinder. Then, the collection of all core curves of cylinders in the $\mathcal{N}$-parallel equivalence class of $\hat{C}$ and the cylinders in their $\tau$-orbits are pairwise disjoint.
\end{lemma}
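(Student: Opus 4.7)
The plan is to reduce geometric pairwise disjointness to vanishing of algebraic intersection numbers on $(\hat{X},\hat{\omega})$, then exploit the $\tau_*$-eigenspace decomposition of $H_1(\hat{X};\mathbb{C})$ and the Euclidean hypothesis. Let $\mathcal{C}$ denote the $\mathcal{N}$-parallel equivalence class of $\hat{C}$. First I observe that since $\pi(\hat{C})$ is Euclidean and $k$ is prime, the preimage $\pi^{-1}(\pi(\hat{C}))$ either equals the single $\tau$-invariant cylinder $\hat{C}$ (degenerate case, in which most of the statement is trivial) or splits as $\bigsqcup_{j=0}^{k-1}\tau^{j}\hat{C}$ into $k$ pairwise disjoint cylinders; in the latter case $\hat{\iota}(\hat{\gamma},\tau_{*}^{m}\hat{\gamma})=0$ for all $m$, where $\hat{\gamma}$ is the core curve of $\hat{C}$. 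Decomposing $[\hat{\gamma}]=\sum_{a=0}^{k-1}\hat{\gamma}_{\zeta^{a}}$ into $\tau_{*}$-eigencomponents and using that $\tau_{*}$ preserves the intersection form (so $H_{1}(\hat{X};\mathbb{C})_{\zeta^{a}}$ pairs nontrivially only with $H_{1}(\hat{X};\mathbb{C})_{\zeta^{-a}}$), one has
\[
\hat{\iota}(\hat{\gamma},\tau_{*}^{m}\hat{\gamma})=\sum_{a=0}^{k-1}\zeta^{-am}I_{a},\qquad I_{a}:=\hat{\iota}(\hat{\gamma}_{\zeta^{a}},\hat{\gamma}_{\zeta^{-a}}).
\]
Vanishing of the left side for every $m$ together with invertibility of the discrete Fourier coefficient matrix $(\zeta^{-am})$ forces $I_{a}=0$ for every $a$.

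Next, for $\hat{C}'\in\mathcal{C}$ with core curve $\hat{\gamma}'$, Theorem~\ref{theorem_localcoord} combined with the $\mathcal{N}$-parallel condition gives $\hat{\gamma}'_{\zeta}=r\hat{\gamma}_{\zeta}$ for some $r\in\mathbb{R}$, and conjugation yields $\hat{\gamma}'_{\zeta^{-1}}=r\hat{\gamma}_{\zeta^{-1}}$. Writing
\[
\hat{\iota}(\hat{\gamma}',\tau_{*}^{m}\hat{\gamma})=\sum_{a=0}^{k-1}\zeta^{-am}J_{a},\qquad J_{a}:=\hat{\iota}(\hat{\gamma}'_{\zeta^{a}},\hat{\gamma}_{\zeta^{-a}}),
\]
the proportionality immediately gives $J_{\pm 1}=rI_{\pm 1}=0$; moreover, $\hat{\gamma}'$ and $\hat{\gamma}$ are parallel on $(\hat{X},\hat{\omega})$, so they bound disjoint maximal strips unless they coincide, which yields $\hat{\iota}(\hat{\gamma}',\hat{\gamma})=\sum_{a}J_{a}=0$. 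The main obstacle is pinning down $J_{a}$ for $a\neq 0,\pm 1$. The plan is to generate additional constraints via $\tau$-equivariance and the larger affine invariant subvariety $\mathcal{M}$ containing $\mathcal{N}$: because $\tau$ acts on $\mathcal{N}$ as the rotation $R_{2\pi/k}\in GL^{+}(2,\mathbb{R})$, the subvariety $\mathcal{M}$ is $\tau$-invariant, and one can iterate the eigenspace computation on $\mathcal{M}$-parallel classes of $\tau^{\ell}\hat{C}$ and apply Theorem~\ref{theorem_cyldefthm} to these classes. Combined with the Hermitian-isotropy information already extracted from the $I_{a}$ and the analogous decomposition of $\hat{\iota}(\hat{\gamma}',\tau_{*}^{m}\hat{\gamma}')$, this should force $J_{a}=0$ for all $a$, and similar arguments cover the remaining cross-terms $\hat{\iota}(\hat{\gamma}'_{1},\tau_{*}^{m}\hat{\gamma}'_{2})$ for pairs $\hat{C}'_{1},\hat{C}'_{2}\in\mathcal{C}$.

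Finally, once every relevant algebraic intersection number vanishes, the conclusion follows geometrically. On a translation surface, two closed geodesics in distinct directions intersect transversally with all intersection signs coinciding, so zero algebraic intersection implies no geometric intersection; this handles all pairs of the form $(\hat{\gamma}'_{i},\tau_{*}^{m}\hat{\gamma}'_{j})$ with $m\neq 0$, since $\tau_{*}$ rotates directions by $2\pi/k$. For same-direction pairs ($m=0$, both in $\mathcal{C}$), the cylinders are maximal parallel strips on $(\hat{X},\hat{\omega})$ and so have disjoint interiors, and hence disjoint core curves, unless equal. Applying this analysis to every pair in $\bigcup_{j=0}^{k-1}\tau^{j}(\mathcal{C})$ yields pairwise disjointness of the full collection, as claimed.
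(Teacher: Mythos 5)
Your reduction to algebraic intersection numbers, the eigenspace decomposition, and the final step (geodesics in different directions intersect with constant sign, so vanishing algebraic intersection gives geometric disjointness; same-direction core curves are disjoint or equal) are all sound, and the last point even makes explicit something the paper leaves implicit. But the proof has a genuine gap exactly where you acknowledge one: after establishing $J_{\pm 1}=rI_{\pm 1}=0$ and $\sum_a J_a=0$, you say the remaining coefficients $J_a$, $a\neq 0,\pm 1$, ``should'' be forced to vanish by iterating the computation on $\mathcal{M}$-parallel classes of $\tau^{\ell}\hat{C}$ and invoking Theorem \ref{theorem_cyldefthm}. That plan is not carried out and is not clearly carriable: at this stage of the argument nothing identifies the $\mathcal{M}$-parallel equivalence classes (determining the structure of $\mathcal{M}$ is precisely what this lemma is later used for), so appealing to cylinder deformations of full $\mathcal{M}$-parallel classes is close to circular, and ``Hermitian-isotropy information'' from the $I_a$ gives no control on the mixed terms $J_a$ for $|a|\geq 2$.

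The missing idea — and the paper's actual mechanism — is that you do not need all $J_a$ to vanish. The quantities $n_m:=\hat{\iota}(\hat{\gamma}',\tau_*^m\hat{\gamma}'')$ are \emph{integers}, $n_0=0$ because $\mathcal{N}$-parallel core curves are parallel geodesics hence disjoint (or equal), and $J_{-1}=\frac{1}{k}\sum_m \bar{\zeta}^{\,m} n_m$ (up to convention, this is the pairing of $\hat{\gamma}'$ with $v=\sum_j\bar{\zeta}^{\,j}(\tau^*)^j(\hat{\gamma}''^*)$). Since $k$ is prime, $\zeta,\zeta^2,\dots,\zeta^{k-1}$ are linearly independent over $\mathbb{Q}$, so the single equation $J_{-1}=0$ already forces $n_m=0$ for every $m$. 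You in fact have $J_{\mp 1}=0$ for every pair of $\mathcal{N}$-parallel curves: the proportionality of $\zeta$-eigencomponents (your $\hat{\gamma}'_{\zeta}=r\hat{\gamma}_{\zeta}$, $\hat{\gamma}''_{\zeta}=r''\hat{\gamma}_{\zeta}$ with $r,r''$ real) reduces them all to $I_{\pm 1}=0$, which your Fourier argument gives. This also disposes of the cross-terms you defer to ``similar arguments,'' and it is the one place where primality is genuinely needed — not in the claim about $\pi^{-1}(\pi(\hat{C}))$ splitting, where your ``degenerate'' $\tau$-invariant case in fact never occurs (a closed regular geodesic has trivial holonomy, so a cylinder always has $k$ disjoint lifts, as noted in Section \ref{subsection_cover}) and is in any case not addressed by your proof. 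So the skeleton matches the paper, but without the integrality-plus-rational-independence step the central implication is unproven.
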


\begin{proof}
Suppose that $\hat{\gamma}$ is a core curve of $\hat{C}$. Because $\pi$ is a local homeomorphism away from singularities, every intersection between $\hat{\gamma}, \tau(\hat{\gamma}),...,\tau^{k-1}(\hat{\gamma})$ projects to a self-intersection of $\pi(\hat{\gamma})$ on $(X,\xi)$. If $\pi(\hat{\gamma})$ is simple, then $\hat{\gamma}, \tau(\hat{\gamma}),...,\tau^{k-1}(\hat{\gamma})$ are pairwise disjoint, i.e. $\hat{\iota}(\hat{\gamma}, \tau^j(\hat{\gamma})) = 0$ for all $j \in \{0,...,k-1\}$. Hence, $\hat{\gamma}$ pairs trivially with $$v := \sum_{j=0}^{k-1}\bar{\zeta}^{j}(\tau^*)^{j}(\hat{\gamma}^*) \in H^1(\hat{X};\mathbb{C})_{\zeta} =p(T_{(\hat{X},\hat{\omega})}\mathcal{N}).$$ Because the ratios of periods of curves $\mathcal{N}$-parallel to $\hat{\gamma}$ are rigid in a neighborhood of $(\hat{X},\hat{\omega})$ in $\mathcal{N}$, they too must pair trivially with $v$, and for that matter, $rv$ for any $r\in \mathbb{C}$.

Suppose there were two (possibly not distinct) core curves $\hat{\gamma}'$ and $\hat{\gamma}''$ that are $\mathcal{N}$-parallel to $\hat{\gamma}$ and such that for some $j,\ell \in \{0,...,k-1\}$, $$\hat{\iota}(\tau^j(\hat{\gamma}'),\tau^{\ell}(\hat{\gamma}'')) \neq 0.$$ We include the case $\hat{\gamma} = \hat{\gamma}'$. Because $\tau^{k-j}$ is an automorphism, the intersection form is $(\tau^{k-j})$-invariant and $$\hat{\iota}(\hat{\gamma}',\tau^{\ell+ (k-j)}(\hat{\gamma}'')) \neq 0.$$ Recall $k-1$ of the $k$-th roots of unity are rationally independent when $k$ is prime and $\hat{\iota}(\hat{\gamma}',\hat{\gamma}'') = 0$, so $\hat{\gamma}'$ pairs non-trivially with $\sum_{j=0}^{k-1}\bar{\zeta}^{j}(\tau^*)^{j}(\hat{\gamma}''^*)$. Using Poinc\'are duality on the relation between $\hat{\gamma}$ and $\hat{\gamma}''$ in (absolute) homology gives the relation $$\frac{1}{k}\sum_{j=1}^k \zeta^{j}(\tau^j)^*(\hat{\gamma}^*) = \frac{r}{k} \sum_{j=1}^k {\zeta}^{j}(\tau^j)^*(\hat{\gamma}''^*)$$
between two absolute cohomology classes in $H^1(\hat{X};\mathbb{Q}(\zeta))_{\bar{\zeta}}$. Taking the conjugate of both sides yields $$\sum_{j=0}^{k-1}\bar{\zeta}^{j}(\tau^*)^{j}(\hat{\gamma}''^*) = \bar{r}v. $$ Therefore, we have arrived at a contradiction since $\hat{\gamma}'$ pairs trivially with $v$.
\end{proof}

The disjointness from the previous Lemma will allow us to easily collapse all cylinders $\mathcal{N}$-parallel to (and distinct from) some cylinder in the pre-image of a Euclidean cylinder. Afterwards, there will be only a single cylinder remaining in its $\mathcal{M}$-parallel equivalence class on some surface in $\mathcal{N}$, and \cite[Theorem 7.1]{Wri15a} will imply arithmeticity.

\begin{lemma} \label{lemma_simplecylinder}
Suppose $k$ is prime. If there is some surface with a Euclidean cylinder in $K$, then $\mathcal{F(M)}$ is arithmetic.
\end{lemma}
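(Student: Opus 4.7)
The plan is to exhibit a surface on which the chosen lift $\hat C$ of the Euclidean cylinder $C$ is alone in its $\mathcal{M}$-parallel equivalence class, and then apply \cite[Theorem 7.1]{Wri15a} to force $\mathbf{k}(\mathcal{F}(\mathcal{M})) = \mathbb{Q}$. To set up, I would fix $(X,\xi) \in K$ carrying a Euclidean cylinder $C$, choose a lift $\hat C \subset (\hat X, \hat \omega) \in \mathcal{N}$ with core curve $\hat \gamma$, and let $\mathcal{D}$ denote the $\mathcal{M}$-parallel equivalence class of $\hat C$. Since $\mathcal{N} \subset \mathcal{M}$, any cylinder $\mathcal{M}$-parallel to $\hat C$ is automatically $\mathcal{N}$-parallel, so $\mathcal{D}$ is contained in the $\mathcal{N}$-parallel class of $\hat C$. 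Lemma \ref{lemma_sameintersectionsimple} then guarantees that the cylinders of $\mathcal{D}$, together with all their $\tau$-images, are pairwise disjoint on $(\hat X, \hat \omega)$.

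The key step is to use this disjointness to collapse every cylinder in $\mathcal{D} \setminus \{\hat C\}$ by shrinking its height to zero. Because these cylinders occupy disjoint regions of $(\hat X, \hat \omega)$ and are disjoint from their $\tau$-orbits, the individual height-shrinkages do not interfere with one another. I would combine them with the cylinder stretch provided by the Cylinder Deformation Theorem (Theorem \ref{theorem_cyldefthm})---which keeps the full class $\mathcal{D}$ inside $\mathcal{M}$---so that the resulting one-parameter family of deformations stays within $\mathcal{M}$. In the limit, one obtains a surface (in $\mathcal{N}$ or a natural degenerate locus thereof) on which $\hat C$ is the unique cylinder in its $\mathcal{M}$-parallel equivalence class.

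With such a surface in hand, I would invoke \cite[Theorem 7.1]{Wri15a}: the existence of a cylinder alone in its $\mathcal{M}$-parallel equivalence class places the shear direction $I(\hat\gamma)$ in $T\mathcal{F}(\mathcal{M})$, and Wright's theorem then pins $\mathbf{k}(\mathcal{F}(\mathcal{M}))$ down to $\mathbb{Q}$. The main obstacle I foresee is executing the collapse rigorously: one must ensure that the limiting surface sits in a locus to which Wright's theorem genuinely applies, and that arithmeticity of the limit pulls back to arithmeticity of $\mathcal{F}(\mathcal{M})$ itself rather than merely to a proper boundary piece. A secondary subtlety concerns marked points: since arithmeticity is a property of $\mathcal{F}(\mathcal{M})$, the collapse must interact cleanly with the forgetful map $\mathcal{F}$, so that marked points appearing or disappearing under the degeneration do not obstruct the final conclusion.
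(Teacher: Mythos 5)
Your outline follows the paper's overall strategy (isolate $\hat C$ in its $\mathcal{M}$-parallel class, then cite \cite[Theorem 7.1]{Wri15a}), but the mechanism you give for the collapse does not work, and this is where the content of the lemma lies. The Cylinder Deformation Theorem only places $\lambda\sum_j h_jI(\alpha_j)$, summed over the \emph{full} equivalence class, in $T\mathcal{M}$; it provides no tangent direction supported on the proper subcollection $\mathcal{D}\setminus\{\hat C\}$, and the stretch changes each height at a rate proportional to that height, so running it backwards collapses $\hat C$ together with everything else. The paper performs the collapse inside $\mathcal{N}$ instead: by Theorem \ref{theorem_localcoord} the direction $-i\sum_i\sum_j h_i\bar\zeta^{\,j}(\tau^*)^jI(\hat\gamma_i)$ lies in $T\mathcal{N}\subset T\mathcal{M}$ (note that the entire $\tau$-orbits $\tau^j(\hat C_i)$ must be collapsed, not only the cylinders parallel to $\hat C$, in order to stay in the $\zeta$-eigenspace), and Lemma \ref{lemma_sameintersectionsimple} guarantees this direction does not touch $\hat C$. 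Two further preparations you omit are needed: first, by Lemma \ref{lemma_parallelsaddle} one perturbs $(X,\xi)$ so that \emph{every} cylinder parallel to $\hat C$ is hat homologous to it --- if you collapse only the $\mathcal{M}$-parallel class, geometrically parallel cylinders outside it survive the collapse and you cannot conclude that $\hat C$ is alone in its class at the new surface; second, one shears first so that no vertical saddle connection lies in the closures of the $\hat C_i$, which is exactly what keeps the collapsed surface $(\hat X',\hat\omega')$ in the interior of $\mathcal{N}$ and resolves the degeneration worry you flag but leave open.

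Even granting the collapse, your assertion that $\hat C$ is then the unique cylinder in its $\mathcal{M}$-parallel class is a genuine gap: collapsing cylinders can create \emph{new} horizontal cylinders passing through the collapsed locus, and a priori one of these could be $\mathcal{M}$-parallel to $\hat C$ on $(\hat X',\hat\omega')$. The paper rules this out with a separate argument: a new horizontal cylinder disjoint from all $\tau^j(\hat\gamma_i)$ traces back (the deformation is an isometry away from the collapsed cylinders) to a horizontal cylinder on $(\hat X,\hat\omega)$, which is impossible since all of those were collapsed; and one meeting some $\tau^j(\hat\gamma_i)$ cannot be $\mathcal{N}$-parallel (hence not $\mathcal{M}$-parallel) to $\hat C$ by Lemma \ref{lemma_sameintersectionsimple}, applied at a nearby surface on the deformation path where $\hat\gamma$, the $\hat\gamma_i$, and the new core curve are simultaneously cylinder core curves. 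Without this step, and without a valid replacement for the collapse mechanism, the appeal to \cite[Theorem 7.1]{Wri15a} is not justified.
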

\begin{proof}
Suppose $(X,\xi)$ is a surface in $K$ with a Euclidean cylinder $C$ and which has been perturbed so that only cylinders with hat homologous core curves are parallel (up to an angle of $\frac{2\pi}{k}$). By Lemma \ref{lemma_parallelsaddle}, such a perturbation always exists. Lift $(X,\xi)$ to $\mathcal{N}$ and consider some cylinder $\hat{C} \subset (\hat{X},\hat{\omega})$ in the pre-image of $C$. Up to rotation, we may assume that $\hat{C}$ is horizontal. Forgetting finitely many marked points does not increase (and when there are multiple cylinders, sometimes decreases) the number of parallel cylinders in some direction. If there are no other cylinders parallel to $\hat{C}$ on $(\hat{X},\hat{\omega})$, then there is a lone horizontal cylinder on $\mathcal{F}((\hat{X},\hat{\omega}))$. In this case, $\mathcal{F(M)}$ is arithmetic by \cite[Theorem 7.1]{Wri15a}. 

Next, assume there are cylinders $\hat{C}_{1},...,\hat{C}_{m}$ with respective core curves $\hat{\gamma}_1,...,\hat{\gamma}_m$ and heights $h_1,...,h_m$ that are parallel to (and distinct from) $\hat{C}$ on $(\hat{X},\hat{\omega})$. By Lemma \ref{lemma_sameintersectionsimple}, all of the core curves of $\hat{C}$ and the collection $\tau^j(\hat{C}_i)$ for every $i$ and $j$ are pairwise disjoint. Thus, any cylinder deformation about $\tau^j(\hat{C}_{i})$ independently will not change the circumference, height, or direction of any other cylinder in this collection. First, slightly shear the cylinders in the direction $$\sum_{j=0}^{k-1}h_i\zeta^jI(\tau^j(\hat{\gamma}_i)) =\sum_{j=0}^{k-1}h_i\bar{\zeta}^j(\tau^*)^j(I(\hat{\gamma}_i))$$ in $T\mathcal{N}$ for each $i$ so the closure of $\hat{C}_i$ does not contain a vertical saddle connection. Then, collapse each cylinder $\tau^j(\hat{C}_i)$ in the direction $$-i \sum_{i=1}^m\sum_{j=0}^{k-1}h_i\zeta^jI(\tau^j(\hat{\gamma}_i))=  -i\sum_{i=1}^m\sum_{j=0}^{k-1}h_i\bar{\zeta}^j(\tau^*)^j(I(\hat{\gamma}_i))$$ in $T\mathcal{N}$. This path can be imagined as the lift of the path in $K$ defined by shearing and collapsing each $\pi(\hat{C}_i)$ on $(X,\xi)$. See Figure \ref{fig: Euclidcylproof}. The resulting surface $(\hat{X}',\hat{\omega}')$ lies in the interior of $\mathcal{N}$ because the length of no saddle connection went to zero. Via marking the surfaces along this path, we can identify $\hat{C}$ and each $\hat{\gamma}_i$ on $(\hat{X}',\hat{\omega}')$. The image of $\hat{C}_i$ (which is the image of $\hat{\gamma}_i$) on $(\hat{Y},\hat{\eta})$ is a concatenation of horizontal saddle connections. 
\begin{figure} \
    \begin{center}
    {\includegraphics[width=0.90\textwidth]{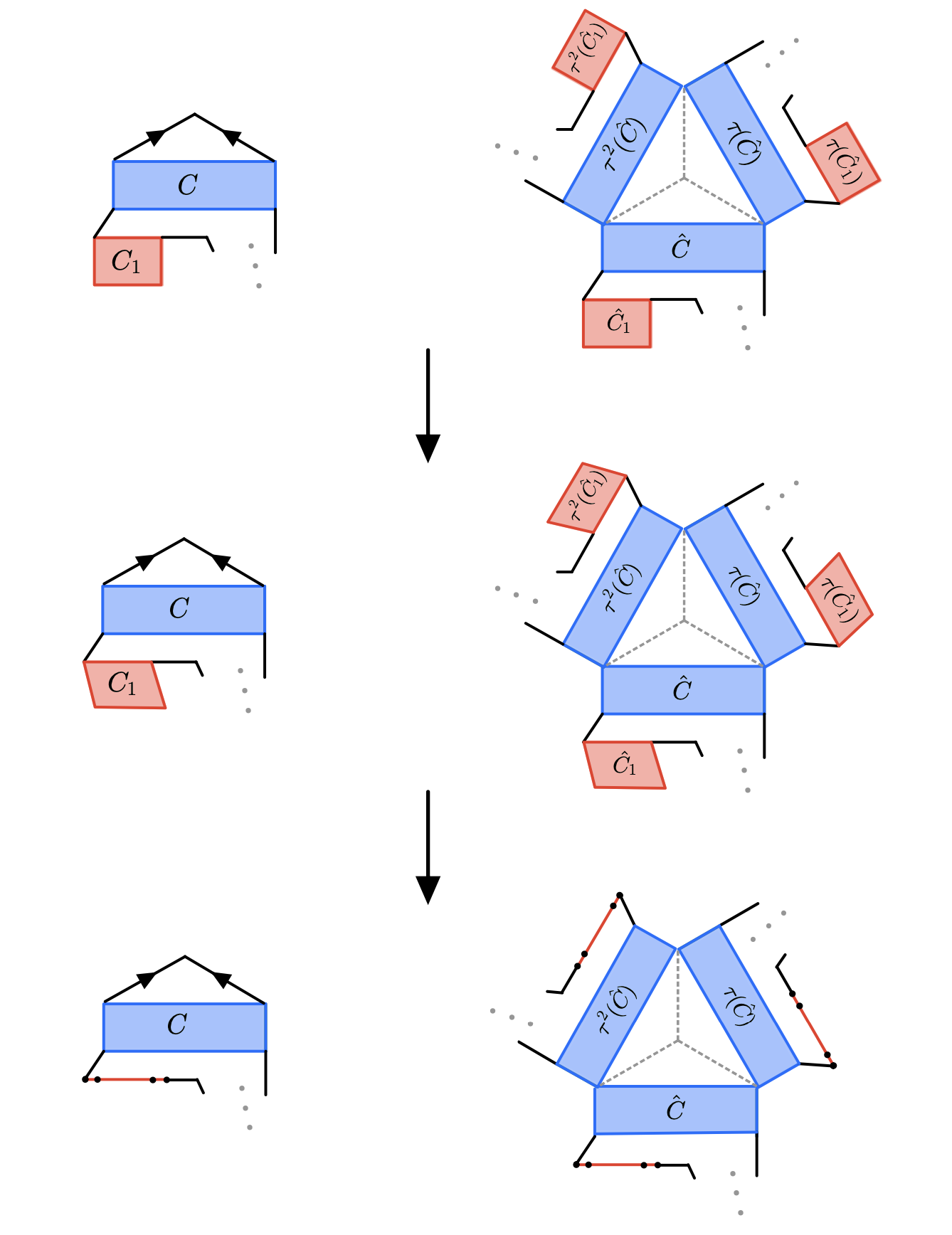}}
    \end{center}
    \caption{The shearing and collapsing of cylinders as in the proof of Lemma \ref{lemma_simplecylinder}.}
    \label{fig: Euclidcylproof}
\end{figure}

Suppose there was a horizontal cylinder $\hat{C}'$ distinct from $\hat{C}$ with a core curve $\hat{\gamma}'$ on $(\hat{X}',\hat{\omega}')$. If $\hat{C}'$ was disjoint from each $\tau^j(\hat{\gamma}_i)$, then we can trace it back to a horizontal cylinder on $(\hat{X},\hat{\omega})$ since our deformation to $(\hat{X}',\hat{\omega}')$ is an isometry away from each $\tau^j(\hat{C}_i)$. However, we had collapsed all cylinders on $(\hat{X},\hat{\omega})$ that were parallel to $\hat{C}$. This implies $\hat{C}'$ has at least one non-trivial intersection with some $\tau^j(\hat{\gamma}_i)$. Any small neighborhood $U$ of $(\hat{X}',\hat{\omega}')$ in $\mathcal{N}$ contains a surface where $\hat{\gamma}$, $\hat{\gamma}'$, and each $\hat{\gamma}_i$ are cylinder core curves. For instance, we can choose a surface along our deforming path to $(\hat{X}',\hat{\omega}')$. On this surface of course, $\hat{C}$ and each $\hat{C}_i$ are $\mathcal{N}$-parallel. Lemma \ref{lemma_sameintersectionsimple} implies $\hat{C}'$ cannot be in the same $\mathcal{N}$-parallel equivalence class because its core curve intersects a core curve of some $\tau^j(\hat{C}_i)$. We conclude there cannot be cylinders $\mathcal{N}$-parallel, and moreover $\mathcal{M}$-parallel, to $\hat{C}$ on $(\hat{X}',\hat{\omega}')$. Thus, there is a lone horizontal cylinder in its $\mathcal{F(M)}$-parallel equivalence class on $\mathcal{F}((\hat{X}',\hat{\omega}'))$. By \cite[Theorem 7.1]{Wri15a}, $\mathcal{F(M)}$ is arithmetic.
\end{proof}

The converse of Lemma \ref{lemma_simplecylinder} is not always true. Mirzakhani-Wright \cite{MW18} argued $\mathcal{M}$ is full rank when $(k,g) = (3,0)$ by using Theorem \ref{lemma_zetaeigenspace} and the fact $H^1(\hat{X};\mathbb{C})_1$ is empty when $g =0$. Theorem \ref{theorem_fullrank} implies $\mathcal{F(M)}$ is also arithmetic, since all components and hyperelliptic loci are arithmetic. Genus zero strata of $k$-differentials with three singularities have dimension one; we can rescale the differential, and that is it. In general, the projectivized stratum $\mathbb{P}\Omega^k\mathcal{M}_g(\mu)$ is defined as the quotient of $\Omega^k\mathcal{M}_g(\mu)$ by the $\mathbb{C}^*$-action which act by rescaling the differential. Therefore, $\mathbb{P}\Omega^3\mathcal{M}_0(m_1,m_2,m_3)$ is a single point after noting genus zero strata are also irreducible. If there was a Euclidean cylinder on a surface somewhere in the stratum, we could pinch its core curve and diverge off to infinity in $\mathbb{P}\Omega^3\mathcal{M}_0(m_1,m_2,m_3)$, so this space would not be compact. For that reason, $K=\Omega^3\mathcal{M}_0(m_1,m_2,m_3)$ is a counterexample.

Projectivized strata are used in this subsection, so now we discuss a compactification of $\mathbb{P}\Omega^k\mathcal{M}_g(\mu)$ and so forth. Let $\Omega^k\mathcal{M}_{g,n}(\mu)$ (or $\mathbb{P}\Omega^k\mathcal{M}_{g,n}(\mu)$) be the stratum in which we label the singularities (and then projectivize). Let $\text{Sym}(\mu)$ be the subgroup of the permutation group of the singularities which only permutes singularities of the same prescribed order. Then, we obtain $\Omega^k\mathcal{M}_g(\mu) = \Omega^k\mathcal{M}_{g,n}(\mu) /  \text{Sym}(\mu)$. What is dubbed \textit{the moduli space of multi-scale $k$-differentials} $\Xi^k \overline{\mathcal{M}}_{g,n}(\mu)$, which is constructed in \cite{CMM24}, is a generalization of the moduli space of multi-scale differentials $\Xi \overline{\mathcal{M}}_{g,n}(\mu)$ constructed in \cite{BCGGM24}. A point on the boundary $\Xi^k \overline{\mathcal{M}}_{g,n}(\mu) \backslash \Omega^k\mathcal{M}_{g,n}(\mu)$ is an enhanced level graph which encodes which curves are being pinched and the relative speeds of which subsurfaces are being crushed near the boundary, along with a twisted $k$-differential compatible with the level graph and an equivalence class of prong-matchings. Moreover, $\Xi^k  \overline{\mathcal{M}}_{g,n}(\mu) / \text{Sym}(\mu)$ contains $\Omega^k\mathcal{M}_g(\mu)$. The $\mathbb{C}^*$-action extends to $\Xi^k \overline{\mathcal{M}}_{g,n}(\mu)$ and the projectivized variety $\mathbb{P}\Xi^k \overline{\mathcal{M}}_{g,n}(\mu)$ is a compactification of $\mathbb{P}\Omega^k\mathcal{M}_{g,n}(\mu)$. 

We direct the reader to \cite{CMM24} and \cite{BCGGM24} for more precise details and definitions. Also, see \cite{Doz24} for an overview of different compactifications for strata of translation surfaces and how they compare. Throughout this paper, we will use the operations of \textit{bubbling a handle} and \textit{breaking up a zero} which are detailed nicely in \cite[Section 3]{CG22} for $(1/k)$-translation surfaces. Below, we show that $\Xi^k \overline{\mathcal{M}}_{g,n}(\mu) \backslash \Omega^k\mathcal{M}_{g,n}(\mu)$ is non-empty i.e. projectivized components of positive dimension are never compact. The author thanks Paul Apisa for conversations which led to the proof of Lemma \ref{lemma_non-compact}.

\begin{lemma} \label{lemma_non-compact}
   Suppose that $\mathbb{P}K$ is a component of $\mathbb{P}\Omega^k\mathcal{M}_{g,n}(\mu)$ and has positive dimension. Then $\mathbb{P}K$ is non-compact.
\end{lemma}
\begin{proof}
 We first note $\mathbb{P}K$ always has positive dimension unless it is a genus zero stratum with at most three singularities. In genus zero, $\mathbb{P}\Omega^k\mathcal{M}_{0,n}(\mu)$ is isomorphic to $\mathcal{M}_{0,n}$, so we can always merge a pair of singularities when there are at least four. The cotangent bundle is trivial on a genus one surface, so $\mathbb{P}K$ is isomorphic to a component of $\mathbb{P}\mathcal{H}_1(\mu)$ which are never compact. Projectivized components of strata of abelian differentials are non-compact, so the same follows for $k$-th powers of abelian differentials. We now move onto higher genus and assume $\mathbb{P}K$ consists not of $k$-th powers of abelian differentials.

First assume $g>2$ or $g = 2$ and there are branched points on the holonomy covers. Let $\mathcal{L}$ be the unit area locus inside the unprojectivized holonomy covers $\mathcal{N}$. If $\mathbb{P}K$ is compact, then so is $\mathcal{L}$ since it is an $S^1$-fiber bundle over $\mathbb{P}K$. Let $f$ be the function on $\mathcal{L}$ which computes the length of the shortest saddle connection on a surface. Compactness would imply there is a surface in $\mathcal{L}$ which realizes the minimum of $f$. Call that surface and saddle connection $(\hat{X},\hat{\omega})$ and $s$ respectively. To obtain a contradiction, we will show there is a deformation of $(\hat{X},\hat{\omega})$ in $\mathcal{N}$ where the period of $s$ is fixed but the area of the surface increases. The length of $s$ then gets proportionally smaller to the area. Therefore, re-normalizing the area will leave $s$ shorter than before in $\mathcal{L}$.

Let $H^{1,0}(\hat{X};\mathbb{C})_{\zeta}$ be the space of holomorphic one-forms in $H^1(\hat{X};\mathbb{C})_{\zeta}$. The dimension of $H^{1,0}(\hat{X};\mathbb{C})_{\zeta}$ is at least two when $g>2$ or $g = 2$ and there are branched points (see \cite[page 273-277]{FK92}). Hence, there is a $v \in H^1(\hat{X}, \Sigma(\hat{\omega});\mathbb{C})_{\zeta}$ which pairs trivially with $s$ and projects to a non-zero class in $H^{1,0}(\hat{X};\mathbb{C})_{\zeta}$. 

We can carry over any cohomology class $\omega$ on $\hat{X}$ to a cohomology class on a nearby surface $S$. We use the same notation for two cohomology classes if one is carried over from the other on a nearby surface, slightly abusing notation. Recall the Hodge bilinear form (on projected classes) $$\langle \omega_1, \omega_2 \rangle = \frac{i}{2}\int_S \omega_1 \wedge \bar{\omega}_2$$ which is positive-definite on $H^{1,0}(S)$ and where $S$ is a surface nearby $\hat{X}$. The Riemann bilinear relation expresses the norm of $\omega_1$ and $\omega_2$ in terms of the values obtained by pairing $\omega_1$ and $\omega_2$ with elements of a symplectic basis of homology. Hence, the norm does not depend on the surface $S$ nearby $\hat{X}$.

Recall $\langle \hat{\omega}, \hat{\omega} \rangle = 1$ because $(\hat{X},\hat{\omega})$ is of unit area. If we set $\hat{M}_t = (\hat{X},\hat{\omega}) + vt$ for $t$ small, we obtain that $$\text{Area}(\hat{M}_t) = \langle \hat{\omega} + vt,\hat{\omega}+ vt \rangle= 1 + \left(\langle \hat{\omega}, v \rangle + \langle v, \hat{\omega} \rangle \right)t + \langle v, v \rangle t^2.$$ This implies $\frac{d^2}{dt^2}\text{Area}(\hat{M}_t) > 0$, so $(\hat{X},\hat{\omega})$ is not a local maximum for the area function. Hence, there is a value for $t$ arbitrary close to $0$ for which $\text{Area}(\hat{M}_t) > 1$ as desired. Since the period of $s$ did not change, we are done with this case.

Now assume that $g = 2$ and there are no branched points i.e. the orders of all the singularities are multiples of $k$. Lemma \ref{lemma_projection} implies the kernel of the projection map to absolute cohomology then has positive dimension, so there are deformations in $\mathcal{N}$ which preserve the absolute periods. Moreover, there are arcs bounded by singularities whose periods are independent of the absolute periods in $\mathcal{N}$. Such arcs are called rel-shrinkable. Let $f$ now be the function on $\mathcal{N}$ which inputs a surface and outputs the length of the shortest rel-shrinkable arc. The geodesic representative of a rel-shrinkable arc is a concatenation of saddle connections with at least one being rel-shrinkable. Hence, the shortest rel-shrinkable arc must always be a saddle connection, and the length is locally given by the magnitude of its period. If $\mathcal{L}$ is compact, then there is a surface $(\hat{X},\hat{\omega})$ which attains the minimum of $f$ because $f$ is continuous. Let $s$ be shortest rel-shrinkable arc on $(\hat{X},\hat{\omega})$. Then we can slightly perturb $(\hat{X},\hat{\omega})$ so that the saddle connection $s$ shrinks and the absolute periods (and hence area) are preserved. We arrive at another surface in $\mathcal{L}$ where $s$ is shorter, and hence we have a contradiction.
\end{proof}

We will now prove when $g >2$, every component of $\Omega^k\mathcal{M}_{g}(\mu)$ has a surface with a Euclidean cylinder. This is performed using induction with base case $g=3$. To avoid subtleties, we choose to work in the compactification $\Xi^k \overline{\mathcal{M}}_{g,n}(\mu)$ which is what has been studied. However, the existence of a cylinder does not depend on whether or not we label singularities, so converting our result from $\Omega^k\mathcal{M}_{g,n}(\mu)$ to $\Omega^k\mathcal{M}_g(\mu)$ is not an issue. Suppose $\mathcal{N}$ consists of genus $\hat{g}$ surfaces with $\hat{n}$ singularities. 

\begin{lemma} \label{lemma_degeneratedgraph}
Suppose that $\mathbb{P}K$ is a component of $\mathbb{P}\Omega^k\mathcal{M}_{g,n}(\mu)$ and has positive dimension. Then there is a point on the boundary of $\mathbb{P}K$ whose level graph $\Gamma$ either has a horizontal edge or is such that 
\begin{enumerate}
\item there are no horizontal edges,
\item every vertex represents a genus zero $(1/k)$-translation surface with at most three singularities, 
\item and the locally maximal vertices represent primitive $k$-differentials.
\end{enumerate}
\end{lemma}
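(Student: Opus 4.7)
The plan is to find a deepest-possible boundary stratum of $\overline{\mathbb{P}K}$ inside the compactification $\mathbb{P}\Xi^k\overline{\mathcal{M}}_{g,n}(\mu)/\text{Sym}(\mu)$, and argue that minimality of its dimension forces the combinatorial conditions on the level graph whenever no horizontal edge arises. Since $\mathbb{P}\Xi^k\overline{\mathcal{M}}_{g,n}(\mu)$ is compact and $\mathbb{P}K$ has positive dimension, the boundary $\partial\overline{\mathbb{P}K}$ is nonempty. Among all boundary strata meeting $\overline{\mathbb{P}K}$, I would choose a stratum $\sigma$ of minimal complex dimension, with underlying enhanced level graph $\Gamma$, and select a representative multi-scale $k$-differential on $\Gamma$. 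If $\Gamma$ has a horizontal edge, the first alternative of the lemma holds.

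Assume from now on that $\Gamma$ has no horizontal edges; I would verify conditions (2) and (3) by showing any violation produces a strictly deeper boundary stratum still inside $\overline{\mathbb{P}K}$, contradicting minimality. For (2), if some vertex $v$ has positive geometric genus $g_v$, I would pinch a nonseparating simple closed curve on the surface at $v$: this either introduces a horizontal edge (returning us to the first alternative and giving a deeper stratum) or drops a new vertex onto a lower level. If instead $g_v=0$ but $v$ carries more than three singularities, pinching a simple closed curve that separates a proper nonempty subset of the singularities creates a new lower-level vertex and deepens the stratum. In both cases one uses the bubbling and breaking-up-a-zero operations recalled in \cite[Section 3]{CG22} in reverse, and their compatibility with the multi-scale compactification ensures the resulting degeneration is still a specialization of $\sigma$.

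For (3), if a locally maximal vertex $v$ represents a non-primitive $k$-differential $\xi_v = \eta_v^d$ with $d>1$, I would use the fact that the $k$-th power component of the ambient stratum of $v$ has complex dimension $2g_v+n_v-1$ (one larger than the primitive dimension $2g_v+n_v-2$ by Theorem \ref{theorem_dimofstrata}). Combined with the $\mathbb{C}^*$-freedom at the maximal vertex, this extra dimension gives room to further specialize $v$ — for instance by pinching a curve on the underlying $(k/d)$-differential — producing a still-deeper boundary stratum inside $\overline{\mathbb{P}K}$, contradicting the choice of $\sigma$. The induction terminates because each step strictly decreases the dimension of the stratum, which is bounded below by zero.

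The main obstacle I expect is verifying that each of these further degenerations actually takes place inside $\overline{\mathbb{P}K}$, as opposed to merely in the closure of the ambient stratum. This requires the local model of $\mathbb{P}\Xi^k\overline{\mathcal{M}}_{g,n}(\mu)$ near $\sigma$: specifically, that the smoothing coordinates of a multi-scale $k$-differential are genuinely transverse to $\sigma$ inside $\overline{\mathbb{P}K}$, so that every deeper boundary stratum adjacent to $\sigma$ is indeed met by $\overline{\mathbb{P}K}$. I would invoke the local description of the compactification from \cite{CMM24} and \cite{BCGGM24}, together with the fact that $\mathbb{P}K$ is an entire component (so no further algebraic constraints cut it out near a generic point), to make this transversality precise and conclude that the deepest stratum $\sigma$ chosen above must already satisfy (1)--(3).
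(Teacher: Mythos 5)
Your overall strategy (degenerate as deeply as possible, then read off the combinatorics) is in the same spirit as the paper, but the mechanisms you use to force conditions (ii) and (iii) have genuine gaps. First, the obstacle you flag -- that each further pinching stays inside $\overline{\mathbb{P}K}$ -- is not the only compatibility issue: a vertex that is not locally maximal is constrained by the Global $k$-Residue Condition, so pinching a curve on that vertex (your move for a positive-genus vertex or a genus-zero vertex with too many singularities) need not be realizable by any multi-scale $k$-differential at all; your minimality-plus-contradiction scheme silently assumes every vertex can be degenerated independently. The paper avoids both problems at once by working upstairs: it takes the closure of the locus of holonomy covers $\overline{\mathbb{P}\mathcal{N}}$ inside the multi-scale compactification of the \emph{abelian} stratum, uses \cite[Theorem 1.1]{Chen19} to meet the boundary and the main theorem of \cite{Ben23} (boundaries of linear subvarieties are linear) to iterate the degeneration without leaving the locus, until the intersection with a boundary component has dimension zero. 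Then the argument is a dimension count, not a contradiction: locally maximal vertices carry no GkRC constraints, so dimension zero forces each of them into a zero-dimensional projectivized stratum, i.e.\ genus zero with at most three zeros and poles (and no higher-order poles, since none were prescribed and locally maximal vertices have no upward or horizontal edges -- a point your sketch never addresses, even though the node-induced singularities must be counted).

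Second, your argument for (iii) does not work where it is needed. If a locally maximal vertex is already genus zero with at most three singularities, its projectivized stratum is a point, so the ``extra dimension'' of the non-primitive locus gives you nothing to pinch and no deeper stratum, yet (iii) still has to be proved. The paper's reason is different and elementary: since locally maximal vertices carry no poles of order $\geq k$, a $k$-th power of an abelian differential there would force a holomorphic abelian differential on $\hat{\mathbb{C}}$, which does not exist. This is also what makes the rest of the proof close up: once the locally maximal vertices are not $k$-th powers, the GkRC is trivial everywhere (by \cite[Definition 1.4 (ii)]{BCGGM19}), and only then does the dimension-zero count apply to \emph{all} vertices, yielding condition (ii) for the whole graph rather than just for the maximal ones. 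As written, your proposal establishes neither the GkRC triviality nor (iii), and its treatment of non-maximal vertices rests on degenerations whose existence is exactly what the GkRC may forbid.
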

\begin{proof}
By Lemma \ref{lemma_non-compact}, any projectivized component of a stratum without higher order poles can be degenerated unless it is a point. Moreover, \cite[Theorem 1.1]{Chen19} implies the same for strata with higher order poles. Therefore, $\overline{\mathbb{P}K}$ intersects the boundary $\mathbb{P} \Xi^k \overline{\mathcal{M}}_{g,n}(\mu) \backslash \mathbb{P}\Omega^k\mathcal{M}_{g,n}(\mu)$. Additionally, we can keep degenerating within $\overline{\mathbb{P}K}$ until it is maximally degenerated. That is, the multi-scale $k$-differential lives in a dimension zero subspace of a product of strata cut out by the Global $k$-Residue Condition (G$k$RC) (see \cite[Definition 1.4]{BCGGM19}).

Assume that the level graph does not have a horizontal edge. The $(1/k)$-translation surfaces represented by locally maximal vertices on $\Gamma$ have no constraints imposed by the G$k$RC, so they necessarily live in strata of dimension zero. By a simple dimension count, the only such strata are of $k$-differentials over $\hat{\mathbb{C}}$ with at most three singularities. There cannot be higher order poles because none were prescribed and there are no upward vertical edges, and in our case horizontal edges, on locally maximal vertices. Because there are no holomorphic abelian differentials on $\hat{\mathbb{C}}$, the $k$-differentials associated to locally maximal vertices cannot be $k$-th powers of abelian differentials. This implies the G$k$RC is trivial everywhere (by satisfying \cite[Definition 1.4 (ii)]{BCGGM19}), and the ambient stratum of each irreducible component is of genus zero with at most three singularities.
\end{proof}

Though smoothing out is not always a local operation, if there is a Euclidean cylinder on a multi-scale $k$-differential, we may always stretch it out sufficiently before smoothing so that this cylinder \textit{persists} in the interior, i.e. if we mark this cylinder along these deformations of stretching and then smoothing, it remains a cylinder at the end.

\begin{lemma} \label{lemma_cylpersists}
    Let $C$ be a Euclidean cylinder on a multi-scale $k$-differential on the boundary of a component $K$ of $\Omega^k\mathcal{M}_g(\mu)$. There is a surface in $K$ in which $C$ persists.
\end{lemma}
\begin{proof}
    Suppose that $C$ lives on the irreducible component $(X,\xi)$ on the multi-scale $k$-differential. Stretch out $C$ to infinity so that the limit becomes two poles of order $-k$ and matching residues which get identified. This corresponds to a horizontal edge $h$ on the new level graph. We can then smooth out all the nodes other than the one corresponding to $h$ to obtain a new multi-scale $k$-differential (see \cite[Section 3.1]{CMM24}). Smoothing out a horizontal edge alone always creates a Euclidean cylinder, so here smoothing out the remaining node corresponding to $h$ realizes $C$ inside $K$. 
\end{proof}

The symbol $\rightsquigarrow$ is used to denote a degeneration of a level graph. Given a level graph $\Gamma$, the subgraph $\Gamma_{<L}$ (or resp.  $\Gamma_{>L}$, $\Gamma_{\leq L}$, $\Gamma_{\geq L}$) of $\Gamma$ consists of the vertices below level $L$ (resp. above level $L$, at level $L$ and below, at level $L$ and above) and all edges that connect them. When we \textit{undegenerate} $\Gamma_{<L}$ (or resp.  $\Gamma_{>L}$, $\Gamma_{\leq L}$, $\Gamma_{\geq L}$), we collapse all the edges of $\Gamma_{<L}$ (resp.  $\Gamma_{>L}$, $\Gamma_{\leq L}$, $\Gamma_{\geq L}$) and afterwards pull all the remaining vertices of this subgraph to level $L-1$ (resp. $L+1$, $L$, $L$).

\begin{lemma} \label{lemma_genus3}
If $K \subset \Omega^k\mathcal{M}_3(\mu)$, then there is a surface in $K$ with a Euclidean cylinder. Moreover, $\mathcal{F(M)}$ is arithmetic when $k$ is prime by Lemma \ref{lemma_simplecylinder}.
\end{lemma}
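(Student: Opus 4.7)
The plan is to invoke Lemma~\ref{lemma_degeneratedgraph} to degenerate $\mathbb{P}K$ to the boundary of $\mathbb{P}\Xi^k \overline{\mathcal{M}}_{g,n}(\mu)/\mathrm{Sym}(\mu)$, and then to observe that, in either alternative of the conclusion of that lemma, smoothing a node of the resulting level graph produces a Euclidean cylinder on a nearby smooth surface in $K$. The hypothesis of Lemma~\ref{lemma_degeneratedgraph} is satisfied since, by Theorem~\ref{theorem_dimofstrata}, $\dim_{\mathbb{C}} \mathbb{P}K = 2g + n - 3 = n + 3 \geq 4$; indeed $n \geq 1$ because $\mu$ is a partition of the positive integer $4k$.

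Lemma~\ref{lemma_degeneratedgraph} thus furnishes a boundary point of $\overline{\mathbb{P}K}$ with level graph $\Gamma$ of one of two types. In case (a), $\Gamma$ has a horizontal edge by hypothesis. In case (b), every vertex of $\Gamma$ represents a genus zero $(1/k)$-translation surface, so the base genus $g = 3$ forces $b_1(\Gamma) = 3$, and hence $\Gamma$ has at least one edge. In either case, I would pick any edge $e$ of $\Gamma$ and argue that smoothing $e$ in a family approaching the boundary from within $K$ produces a Euclidean cylinder on the nearby smooth surfaces. This follows from the local plumbing model for the multi-scale $k$-differential compactification in~\cite{CMM24} (extending~\cite{BCGGM24}): a neighborhood of the former node looks like a flat annulus $\{r < |z| < R\}$ carrying a $k$-differential of the form $z^a\, dz^k$, and its central core curve $\{|z|=\sqrt{rR}\}$ is an embedded simple closed curve on $X$, so the resulting cylinder on $(X,\xi)$ is Euclidean. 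The same description applies to horizontal and vertical edges; only the rate at which the modulus $\log(R/r)$ grows as the plumbing parameter tends to zero differs between the two types of edge.

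With a surface in $K$ carrying a Euclidean cylinder in hand, Lemma~\ref{lemma_simplecylinder} immediately yields the second assertion, that $\mathcal{F}(\mathcal{M})$ is arithmetic, completing the proof. The main technical point to verify carefully is that the plumbing of an edge in case (b) really produces an \emph{embedded} flat annulus with a topologically simple core curve on $(X,\xi)$ --- rather than, for instance, a multiply-covered annulus that could arise from the $\mathbb{Z}/k$ holonomy of $\xi$ around the former node. This is where the local description of the compactification enters crucially, and given the explicit structure of $\Xi^k \overline{\mathcal{M}}_{g,n}(\mu)$ from~\cite{CMM24}, I expect this to follow directly, so no substantial obstacle arises.
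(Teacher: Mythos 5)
Your case (a) matches the paper: a horizontal edge of the level graph corresponds to a pinched Euclidean cylinder, so its presence immediately gives a Euclidean cylinder on nearby smooth surfaces in $K$. But your case (b) contains a genuine gap: it is not true that plumbing a \emph{vertical} edge produces a Euclidean cylinder, and the claim that ``the same description applies to horizontal and vertical edges'' is exactly where the argument breaks. At a vertical node the two branches carry a zero of some order $m$ and a pole of order $m+2k$, so the local model of the $k$-differential on the plumbing annulus is $z^a\,(dz)^k$ with $a \neq -k$. Such an annulus is not flat: the circles $\{|z|=\mathrm{const}\}$ are not closed geodesics of the induced cone metric (only the exponent $a=-k$, i.e.\ the horizontal-node model, gives a flat cylinder). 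So smoothing a vertical edge produces a long conformal annulus but no cylinder in the flat-geometric sense, and in the no-horizontal-edge alternative of Lemma~\ref{lemma_degeneratedgraph} your construction yields nothing. The distinction between horizontal and vertical nodes is precisely the point of that lemma's dichotomy; if every edge gave a cylinder, the lemma and the rest of the argument would be unnecessary.

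The paper's proof of the no-horizontal-edge case is correspondingly much more involved: since all vertices of $\Gamma$ are genus zero, $b_1(\Gamma)=3$, and one manipulates the level graph (locating a lowest \emph{peak vertex}, undegenerating the levels below it, and splitting into two cases according to how its downward edges attach) to reach a boundary point whose graph has a genus one vertex carrying a pole of order at least $k+1$. One then invokes \cite[Theorem 3.12]{CG22} to replace that genus one piece by one obtained by bubbling a handle, which carries a Euclidean cylinder, checks throughout that the global $k$-residue condition stays trivial (because no top-level vertex is a $k$-th power of an abelian differential), and only then smooths the multi-scale $k$-differential into the interior of $\mathbb{P}K$ while the cylinder persists. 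Your proposal is missing this entire mechanism, so as written it only proves the lemma in the case where the degeneration happens to have a horizontal edge.
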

\begin{proof}
The image of $K$ in $\mathbb{P}\Omega^k\mathcal{M}_{3,n}(\mu)$, which we will call $\mathbb{P}K$, is always of positive dimension since $g = 3$. Therefore, let $\Gamma$ be the level graph from Lemma \ref{lemma_degeneratedgraph} on the boundary of $\mathbb{P}K$. Any horizontal edge in $\Gamma$ represents a pinched Euclidean cylinder. If $\Gamma$ has a horizontal edge, we know there is a Euclidean cylinder somewhere in the interior of the stratum and are done. 

Next, assume that $\Gamma$ is the graph from Lemma \ref{lemma_degeneratedgraph} without horizontal edges. In general, the genus of the underlying stable Riemann surface is equal to the sum of the genus on each irreducible component plus the first Betti number $b_1$ of its level graph. Because no irreducible component has positive genus in our case, $b_1(\Gamma) = 3$. 

To prove the Lemma from here, we will show by undegenerating and degenerating $\Gamma$ that there is a point on the boundary of $\mathbb{P}K$ which has a genus one irreducible component with at least one higher order pole. We will then use \cite[Theorem 3.12]{CG22} to show we can perturb the genus one surface to get a Euclidean cylinder and smooth out the multi-scale $k$-differential into the interior of $\mathbb{P}K$.

Call a vertex of $\Gamma$ a \textit{peak vertex} if it is the highest vertex of some simple cycle in $\Gamma$. Let $L$ be the lowest level that contains a peak vertex. Consider the subgraph $\Gamma_{<L}$ of $\Gamma$ defined above. Because there are no simple cycles below level $L$, $\Gamma_{<L}$ is a disjoint union of trees. We first undegenerate $\Gamma_{<L}$ to create a graph $\Gamma'$ whose lowest level is level $L-1$. We do this undegeneration only to simplify the rest of the argument. The G$k$RC remains trivial since the locally maximal vertices of $\Gamma'$ are a subset of those that were on $\Gamma$ and thus do not represent $k$-th powers of abelian differentials. At level $L-1$, $\Gamma'$ is a collection of vertices representing genus zero irreducible components. Choose some peak vertex $V$ at level $L$. 

For the first case, suppose $V$ has three downward edges connected to the same vertex at level $L-1$. The restriction of valance at most three on vertices in $\Gamma'_{\geq L}$ implies $V$ is locally maximal. Thus, we introduce level 1 and pull $V$ here while preserving the dual graph, partial order on the vertices, and enhancements. We will name this new graph $\Gamma''$. Undegenerating $\Gamma''_{<1}$, we obtain a two-level graph $\Gamma'''$ with two vertices connected by three edges. Because the genus of the irreducible component represented by $V$ is zero and $b_1(\Gamma''')=2$, the lower vertex represents a genus one surface with three poles of order at least $k+1$. See Figure \ref{fig: case1}.
\begin{figure}
    \begin{center}
    {\includegraphics[width=1.05\textwidth]{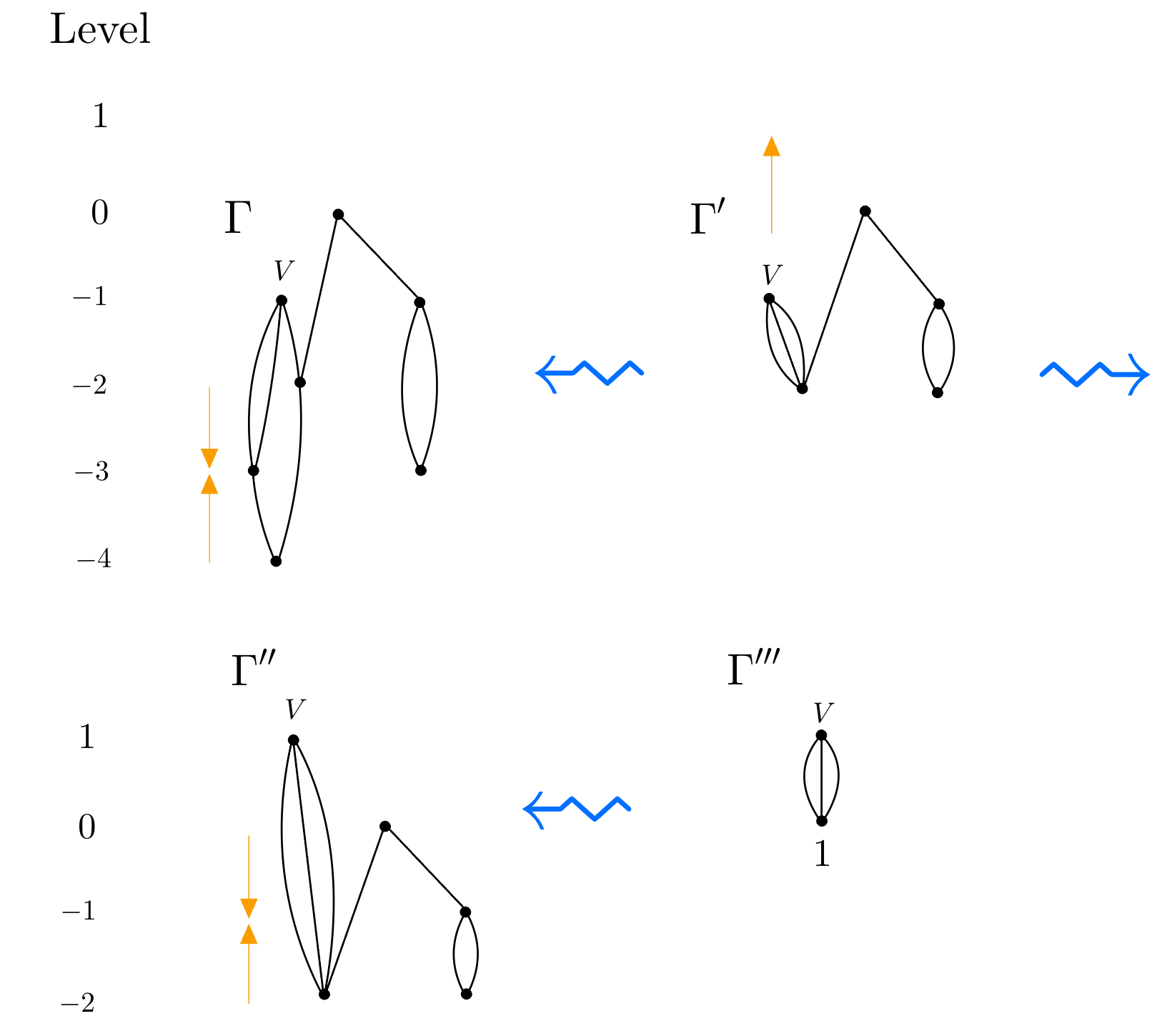}}
    \end{center}
    \caption{Case one in the proof of Lemma \ref{lemma_genus3}.}
    \label{fig: case1}
\end{figure}

The other case is $V$ has two downwards edges to the same vertex at level $L-1$. We then introduce level $L-0.5$ and pull $V$ here while preserving the dual graph, partial order on the vertices, and enhancements. Call this new graph $\Gamma''$ and note $b_1(\Gamma''_{\leq L-0.5}) = 1$ because $V$ is the only peak vertex of $\Gamma''_{\leq L-0.5}$. Undegenerating $\Gamma''_{\leq L-0.5}$ creates a vertex representing a genus one surface with at least one pole of order at least $k+1$. These poles are represented by vertical edges connecting this vertex to the rest of the graph (and exist because the graph is connected). See Figure \ref{fig: case2}.
\begin{figure}
    \begin{center}
    {\includegraphics[width=1.05\textwidth]{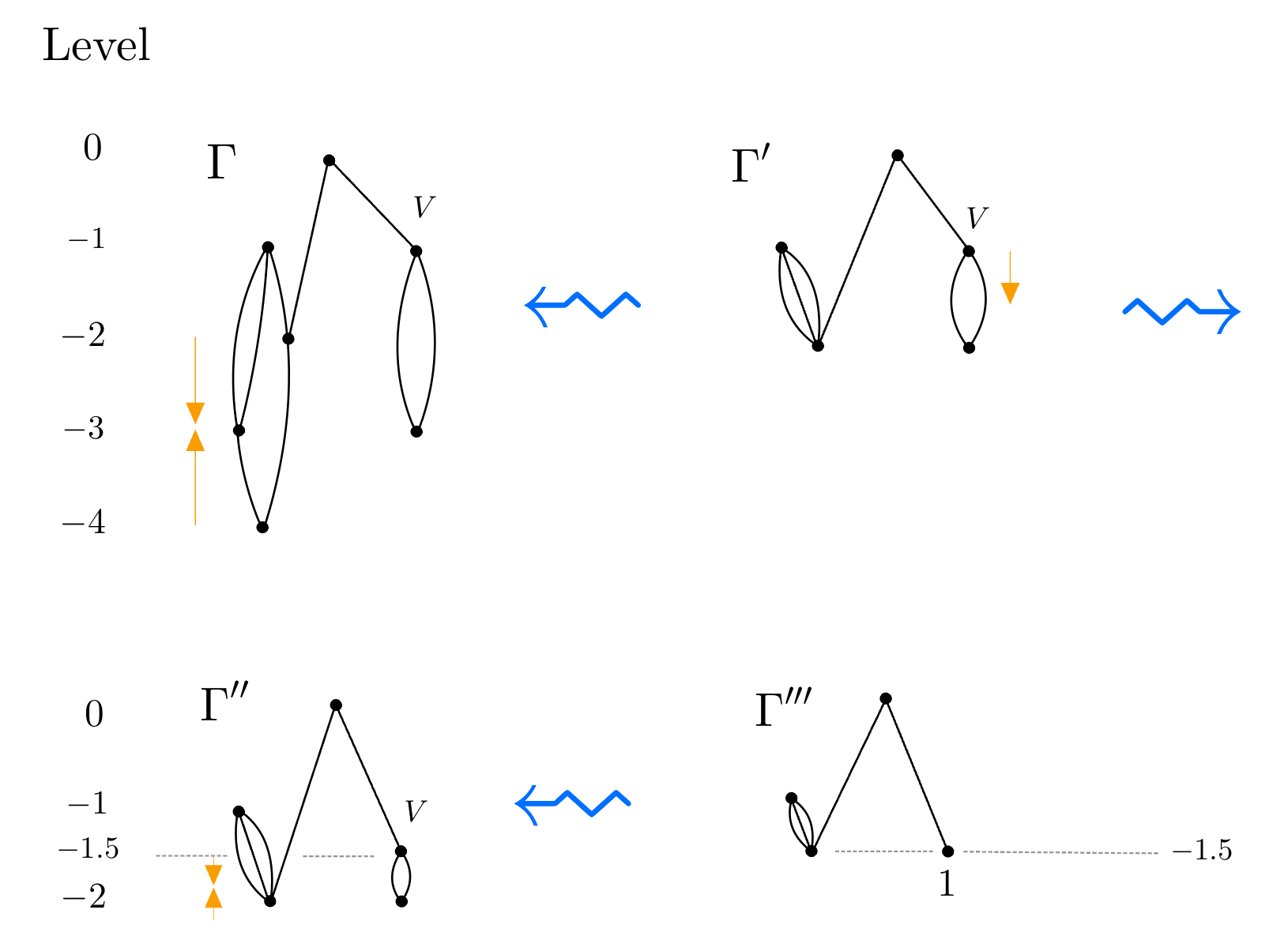}}
    \end{center}
    \caption{Case two in the proof of Lemma \ref{lemma_genus3}.}
    \label{fig: case2}
\end{figure}

In either case, call the genus one surface $(X,\xi)$. If a genus one surface has a pole of order at least $k+1$, then the sum of the poles of orders less that $k$ and zeros must be at least $k+1$. By \cite[Theorem 3.12]{CG22}, the ambient component of the stratum of $(X,\xi)$ contains a surface $(X',\xi')$ obtained by bubbling a handle from a surface in some genus zero stratum. In particular, $(X',\xi')$ contains a Euclidean cylinder which emerged from smoothing out a horizontal node from the bubbling a handle operation. In both cases, the locally maximal vertices on the new graph are a subset of those that were on $\Gamma$, and the projectivized $k$-differentials associated to them remain unchanged throughout the degenerations and undegenerations. Because none were the $k$-th power of an abelian differential, the G$k$RC remains trivial. Therefore, we can replace $(X,\xi)$ with $(X',\xi')$ while still being able to smooth the multi-scale $k$-differential into the interior of $\mathbb{P}\Omega^k\mathcal{M}_{3,n}(\mu)$. We can smooth out the multi-scale $k$-differential to obtain a welded surface in $\mathbb{P}K$ (and thus $K$) in which $C$ persists using Lemma \ref{lemma_cylpersists}. By Lemma \ref{lemma_simplecylinder}, $\mathcal{F(M)}$ is arithmetic.
\end{proof}

\begin{remark}
The proof of Lemma \ref{lemma_genus3} does not generalize to $g=2$ because after fully degenerating, we may obtain a unique lowest peak vertex as in case one. Pulling it up to level 1 and undegenerating the lower levels produces a genus zero, not one, vertex at the bottom. Therefore, we cannot apply \cite[Theorem 3.12]{CG22}. We have no control of how we fully degenerated in the first place, so we cannot avoid such a graph in our argument.
\end{remark}

\begin{lemma} \label{lemma_Marithmetic}
If $K \subset \Omega^k\mathcal{M}_g(\mu)$ where $g>2$, then there is a surface in $K$ with a Euclidean cylinder. Moreover, $\mathcal{F(M)}$ is arithmetic when $k$ is prime by Lemma \ref{lemma_simplecylinder}.
\end{lemma}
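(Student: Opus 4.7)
The plan is to argue by induction on $g$, with base case $g = 3$ supplied by Lemma \ref{lemma_genus3}. I would fix $g \geq 4$, assume the lemma for all $3 \leq g' < g$, and exhibit a Euclidean cylinder on some surface in a given component $K \subset \Omega^k\mathcal{M}_g(\mu)^{\text{prim}}$; the second sentence then follows immediately from Lemma \ref{lemma_simplecylinder}.

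First, I would apply Lemma \ref{lemma_degeneratedgraph} to obtain a boundary point of $\mathbb{P}K$ whose level graph $\Gamma$ either has a horizontal edge or has only genus-zero vertices of valence at most three, with primitive locally maximal vertices. If $\Gamma$ contains a horizontal edge, then smoothing just that node produces a surface in $K$ whose Euclidean cylinder is the welding of the pinched horizontal node, and we are done. Otherwise, since every vertex has genus zero we have $b_1(\Gamma) = g$, and I would follow the peak-vertex argument from the proof of Lemma \ref{lemma_genus3}: choose a peak vertex $V$ at the lowest level $L$ containing a peak, pull $V$ upward while preserving the dual graph, and undegenerate the sublevels to produce a two-level graph whose lower vertex represents a $(1/k)$-translation surface with higher-order poles at the pinched nodes.

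If $V$ has three (resp.\ two) downward edges to the same vertex below, that lower component has genus $g-2$ (resp.\ $g-1$); in either case it lies in a primitive stratum of $k$-differentials, the primitivity being inherited from $K$ through the compatibility with the twisted $k$-differential and the trivial Global $k$-Residue Condition at the locally maximal vertices. When the lower genus is at least $3$, the inductive hypothesis furnishes a surface in the ambient stratum of that vertex carrying a Euclidean cylinder; I would substitute this surface into the multi-scale $k$-differential and smooth back into $\mathbb{P}K$, noting that the plumbing construction is local near the pinched nodes and thus preserves the Euclidean cylinder, yielding the desired surface in $K$. When the lower genus is $1$ or $2$ (which can occur only for $g \in \{4,5\}$), I would instead invoke \cite[Theorem 3.12]{CG22} exactly as in Lemma \ref{lemma_genus3}, applied once or iteratively, to bubble a handle from a genus-zero stratum and produce a Euclidean cylinder on the lower component that again persists under smoothing.

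The hard part will be verifying two compatibility statements. First, that every candidate lower-level component lands in a primitive $k$-differential stratum, so that the inductive hypothesis is applicable; this reduces to tracking that the locally maximal vertices above $V$ in the modified graph remain the same as in $\Gamma$ (forcing the GkRC to stay trivial) and that no lower component accidentally becomes a $d$-th power. Second, that the substitution of a surface with a Euclidean cylinder for the lower component is realized by an actual path in the boundary stratum of $\mathbb{P}\Xi^k\overline{\mathcal{M}}_{g,n}(\mu)/\text{Sym}(\mu)$ along which the rest of the multi-scale $k$-differential varies continuously, so that the subsequent plumbing legitimately produces a nearby interior point of $\mathbb{P}K$. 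Both of these are handled by the same framework of \cite{CG22} and \cite{CMM24} that underlies Lemma \ref{lemma_genus3}, so no essentially new tool is needed beyond the induction and a careful bookkeeping of the cases.
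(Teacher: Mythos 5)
Your overall strategy (induction on $g$ with base case Lemma \ref{lemma_genus3}, degenerate via Lemma \ref{lemma_degeneratedgraph}, modify the level graph, deform one component to acquire a Euclidean cylinder, then smooth back into $\mathbb{P}K$) matches the paper in outline, but the inductive step as you set it up has a genuine gap. You place the high-genus piece at the \emph{bottom} level: after pulling up a peak vertex and undegenerating, your genus $g-1$ (or $g-2$) component carries poles of order at least $k+1$ at every pinched node, since all its nodes are vertical. The inductive hypothesis, however, is the lemma itself, which -- by the standing convention of the paper -- only concerns strata $\Omega^k\mathcal{M}_{g'}(\mu')$ with all entries of $\mu'$ greater than $-k$. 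So the hypothesis is simply not applicable to the lower-level component, for any $g$, and the induction does not close. Your fallback for lower genus $1$ or $2$ does not repair this: \cite[Theorem 3.12]{CG22} is a genus-one statement (the rotation-number classification of components of $\Omega^k\mathcal{M}_1$), it cannot be ``applied iteratively'' to a genus-two component, and indeed the paper explicitly notes that the argument of Lemma \ref{lemma_genus3} does not extend to $g=2$; no source of Euclidean cylinders in genus-two strata (with or without higher-order poles) is available here. One could try to strengthen the induction to allow higher-order poles, but then Lemma \ref{lemma_degeneratedgraph} and the base case would have to be redone, since that lemma uses precisely that no higher-order poles were prescribed.

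The paper's proof avoids this by arranging the graph so that the genus $g-1$ component sits at the \emph{top} level, where it has no poles at the nodes (the nodes are zeros there) and no GkRC constraint. Concretely, it works with \emph{post} vertices (lowest vertices of simple cycles) rather than peak vertices: it undegenerates to a two-level graph with genus-zero bottom vertices, isolates a post vertex $V$ at the bottom, and then repeatedly collides pairs of poles on $V$ and undegenerates, reducing $V$ to exactly two upward edges; connectivity and $b_1=1$ then force the single top vertex to have genus $g-1$ in a stratum without higher-order poles, to which the inductive hypothesis applies directly, and the top-level surface can be deformed freely before smoothing. If you want to keep your bottom-level approach, you would need a separate result producing Euclidean cylinders in strata of $k$-differentials with higher-order poles (including in genus one and two), which neither the paper nor your proposal supplies; as written, the inductive step fails for every $g\geq 4$.
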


\begin{proof}
We will prove using induction that every component of $\mathbb{P}\Omega^k\mathcal{M}_{g,n}(\mu)$ has a surface with a Euclidean cylinder when $g >2$. From the previous Lemma, we know this holds for $g=3$. Thus, suppose any component of any stratum without higher order poles in $\mathbb{P}\Omega^k\mathcal{M}_{g-1}$ has a surface with a Euclidean cylinder. Consider the component $\mathbb{P}K$ of $\mathbb{P}\Omega^k\mathcal{M}_{g,n}(\mu)$. We first show without having horizontal edges by chance that there is a point on the boundary $\mathbb{P}\Xi^k \overline{\mathcal{M}}_{g,n}(\mu) \backslash \mathbb{P}\Omega^k\mathcal{M}_{g,n}(\mu)$ whose level graph has a vertex at the top representing a genus $g-1$ surface. Then by assumption, we can perturb the $g-1$ surface to have a Euclidean cylinder and smooth out the multi-scale $k$-differential.

We begin with the graph $\Gamma$ from Lemma \ref{lemma_degeneratedgraph}. If there are horizontal edges on the level graph, it indicates a pinched Euclidean cylinder, so there is one in the interior and we are done. Otherwise, we again obtain a graph $\Gamma$ with vertices which represent genus zero irreducible components with at most three singularities. Hence, the valence on each vertex is at most three. Moreover, none of the locally maximal vertices represent $k$-th powers of abelian differentials, so the G$k$RC is trivial by \cite[Definition 1.4 (ii)]{BCGGM19}.

Call a vertex a \textit{post vertex} if it is the lowest vertex of a simple cycle. Let $L$ be the lowest level of $\Gamma$ with a post vertex, and therefore $\Gamma_{\leq L}$ is a disjoint union of trees. Undegenerating $\Gamma_{>L}$ and $\Gamma_{\leq L}$ creates a two-level graph such that the bottom level, now called level $-1$, vertices represent genus zero surfaces. The G$k$RC remains trivial because every locally maximal vertex on this new graph was locally maximal or was merged with a locally maximal vertex on $\Gamma$. Hence, they cannot represent $k$-th powers of abelian differentials. At least one bottom level vertex has at least one pair of upwards edges $a$ and $b$ connected to the same vertex at the top level, now level 0. This vertex was a post vertex or merged with a post vertex in $\Gamma$. Call this vertex $V$ and consider the deformation of the graph which introduces level $-0.5$, pulls all bottom level vertices other than $V$ (if they exist) here, and undegenerates the top two levels. We are left with a two level graph $\Gamma'$ with $V$ as the only vertex at the bottom level. See Figure \ref{fig: 2levelgraph}. Moreover, $V$ still has the pair $a$ and $b$ connecting it to a vertex at the top level. If there are no other upward edges on $V$, then $\Gamma'$ must have only two vertices because it is connected and there are no horizontal edges connecting top level vertices. Furthermore, $b_1(\Gamma') =1$, and since $V$ represents a genus zero surface, the top vertex must represent a genus $g-1$ surface as desired. 

Otherwise, assume there is some edge $c$ distinct from $a$ and $b$. Once again, the vertices at the top level do not represent $k$-th powers of abelian differentials since neither did locally maximal vertices on $\Gamma$. Therefore, the G$k$RC remains trivial on $\Gamma'$ by \cite[Definition 1.4 (ii)]{BCGGM19}, and we have no conditions imposed on how we can degenerate $V$. Because the ambient stratum of the surface represented by $V$ is of genus zero, it is also isomorphic to $\mathcal{M}_{0,s}$. Given there are at least three upwards edges on $V$, there must also be at least one zero for the orders of singularities on the associated $k$-differential to sum to $-2k$. Thus $s \geq 4$, and there is a degeneration which brings together an arbitrary pair of singularities. In particular, there is a degeneration of $\Gamma'$ which collides the higher order poles corresponding to $a$ and $c$. On the new graph $\Gamma''$, there is a new vertex which has a new edge $d$ as its one downward edge to $V$ and $a$ and $c$ as its two upwards edges. Moreover, $V$ has one less upward edge and is still a post vertex because $a$, $b$, and $d$ form a simple cycle. We undegenerate all but the bottom level of $\Gamma''$ to create a new two-level graph in which $V$ is still a post vertex ($a$ gets collapsed, and $b$ and $d$ form a simple cycle). See Figure \ref{fig: procedure}. 

While observing the G$k$RC will still remains trivial throughout, we repeat this procedure of 
\begin{enumerate}[topsep=0pt]
    \item choosing a pair of edges which connect $V$ to the same top level vertex, 
    \item choosing an edge of this pair and an edge outside of this pair to collide,
    \item and undegenerating all but the bottom level
\end{enumerate} until $V$ has only two upwards edges on a two-level graph $\bar{\Gamma}$. Figure \ref{fig: procedure} demonstrates performing this procedure twice. Notice our choices avoid us colliding two edges which may be a lone pair connecting $V$ to another vertex. Therefore, $V$ remains a post vertex throughout. The connectivity of $\bar{\Gamma}$ implies there is a single vertex at the top. Since $b_1(\bar{\Gamma}) = 1$ and $V$ still represents a genus zero surface, the top level vertex represents a genus $g-1$ surface as desired.
\begin{figure}
    \begin{center}
    {\includegraphics[width=1\textwidth]{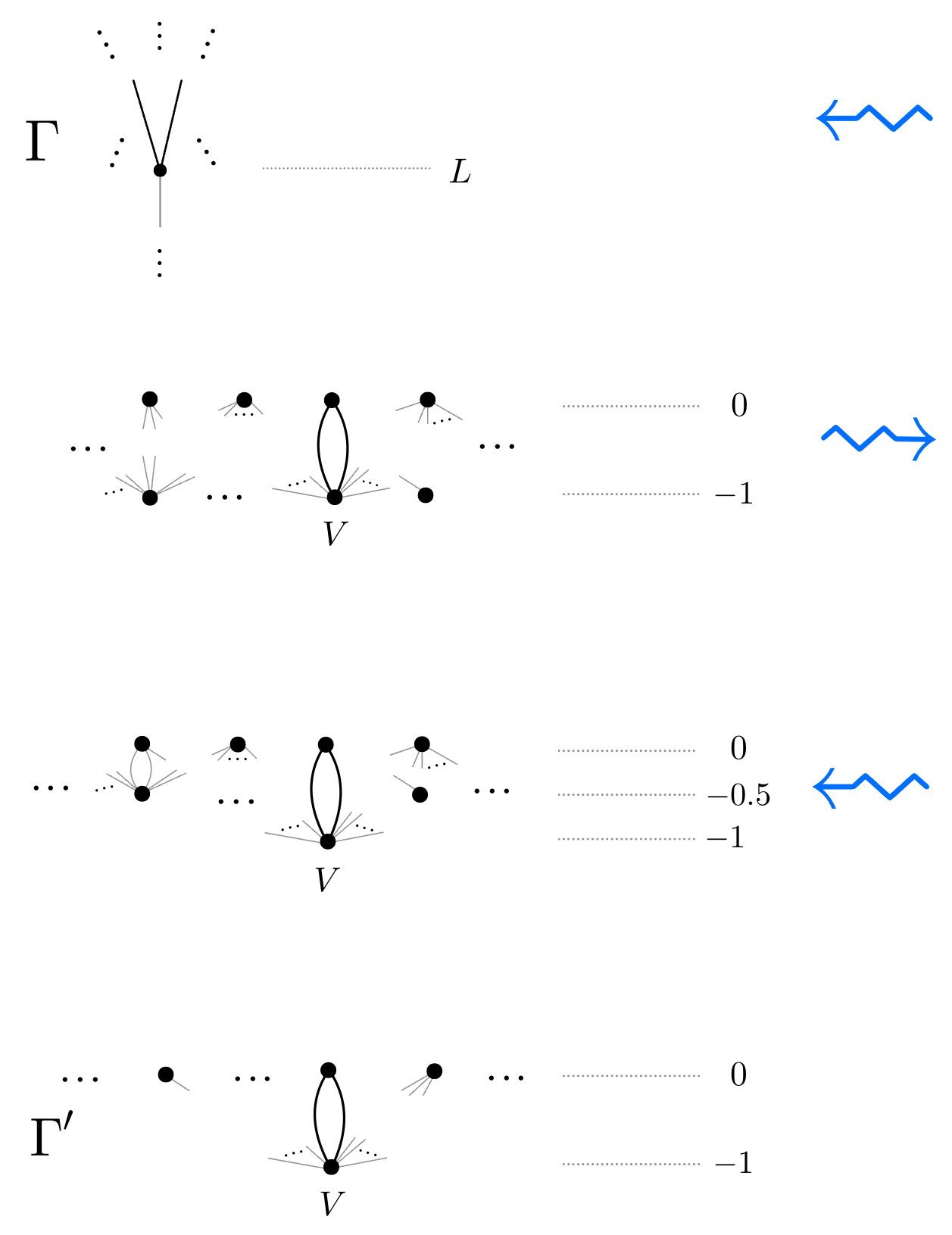}}
    \end{center}
    \caption{A series of undegenerating and degenerating the level graph in Lemma \ref{lemma_Marithmetic} to create a two-level graph with a single vertex on the bottom level.}
    \label{fig: 2levelgraph}
\end{figure}

\begin{figure}
    \begin{center}
    {\includegraphics[width=.9\textwidth]{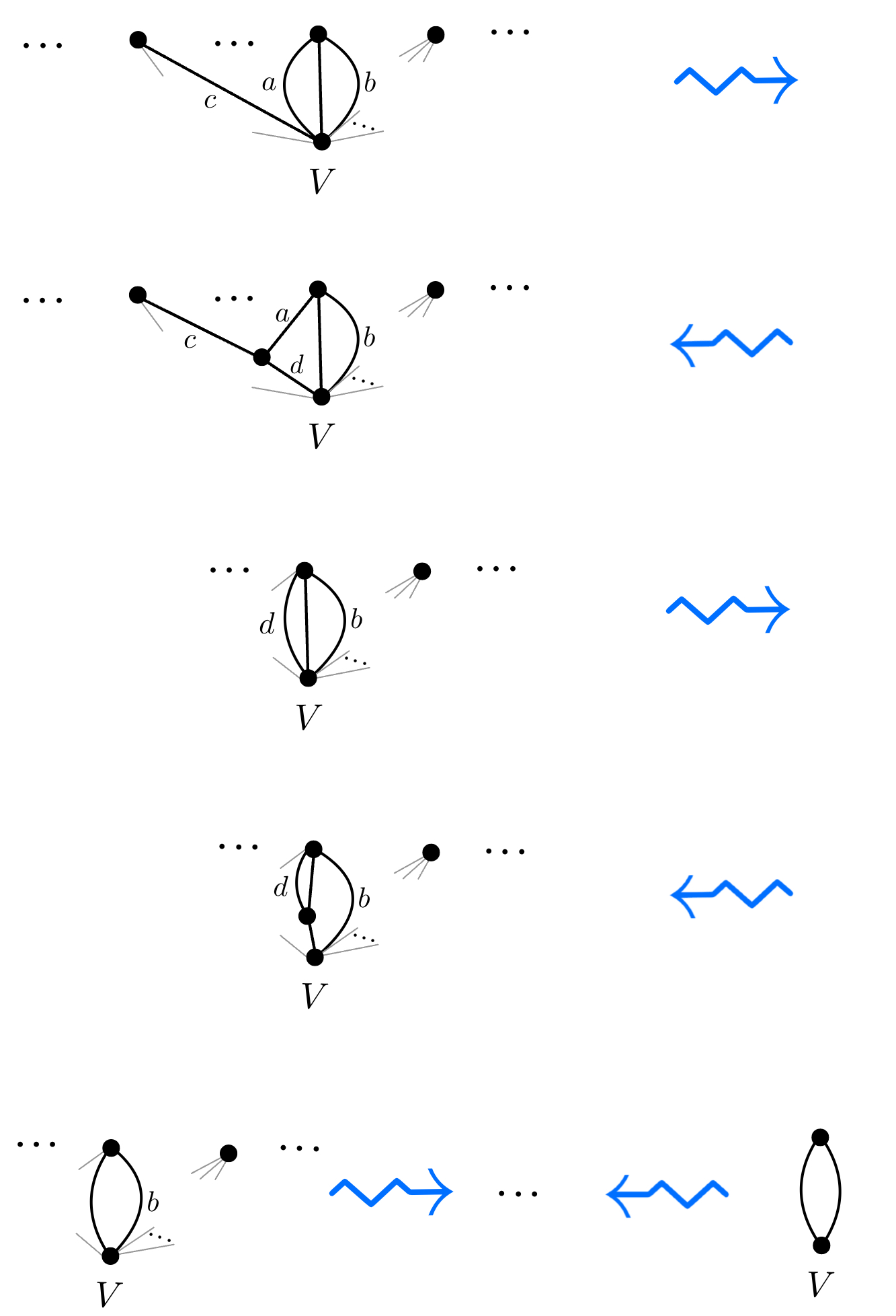}}
    \end{center}
    \caption{The procedure in Lemma \ref{lemma_Marithmetic} performed twice to decrease the number of vertical edges of post vertex $V$.}
    \label{fig: procedure}
\end{figure}
There is no G$k$RC anywhere (and at the top level in general), so we can continuously deform the genus $g-1$ surface until it has a Euclidean cylinder $C$. We can smooth out the multi-scale $k$-differential to obtain a welded surface in $\mathbb{P}K$ (and thus $K$) in which $C$ persists using Lemma \ref{lemma_cylpersists}. By Lemma \ref{lemma_simplecylinder}, $\mathcal{F(M)}$ is arithmetic. 
\end{proof}

In \cite{Api21}, we see $\mathcal{F(M)}$ could be either arithmetic or non-arithmetic when $K$ consists of genus zero surfaces. It remains an open question whether non-arithmeticity is possible when $K$ consists of genus one or two surfaces. Arithmeticity does however occur; many components in genus one and two can be realized from bubbling a handle followed by breaking up zeros. These components have a Euclidean cylinder, and thus its lift will have an arithmetic orbit closure by Lemma \ref{lemma_simplecylinder}. Lemma \ref{lemma_genus1cyl} provides conditions on the partition $\mu$ in which we know this is true in genus one. Arithmeticity could perhaps be proved with stronger adjacency results on strata of $k$-differentials than what is currently available.

The following Lemma is of course stronger than Lemma \ref{lemma_Marithmetic} but not needed to show arithmeticity. We will need it for the case $(k,g) = (3,3)$ in the proof of Lemma \ref{lemma_Mhighrank}. We say a cylinder is null-homologous if its core curves are null-homologous in absolute homology.

\begin{lemma} \label{lemma_notnullcyl}
If $K \subset \Omega^k\mathcal{M}_g(\mu)$ where $g>2$, then there is a surface in $K$ with a Euclidean cylinder which is not null-homologous. 
\end{lemma}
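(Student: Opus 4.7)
The plan is to strengthen the construction in Lemmas \ref{lemma_genus3} and \ref{lemma_Marithmetic} so that the Euclidean cylinder produced is non-separating. Since on a closed orientable surface a simple closed curve is null-homologous in absolute homology if and only if it is separating, a non-separating Euclidean cylinder immediately gives the desired non-null-homologous cylinder.

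The key topological input is that, in both earlier lemmas, the Euclidean cylinder is ultimately obtained by smoothing a horizontal node produced by the bubbling-a-handle operation of \cite[Theorem 3.12]{CG22}. Bubbling a handle attaches a cylinder whose two boundary circles are identified into a single self-node on one and the same component, so smoothing this self-node yields a Euclidean cylinder whose core curve is a meridian of the newly attached topological handle. Cutting such a meridian merely opens the handle and leaves the bubbled component connected, so the resulting cylinder is non-separating on that component.

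Using this, I would revisit the base case $g=3$ of Lemma \ref{lemma_genus3}: the Euclidean cylinder $C$ on the genus-one surface $(X',\xi')$ is a meridian of the bubbled handle and therefore non-separating on $(X',\xi')$. Smoothing the remaining vertical nodes to land in the interior of $\mathbb{P}K$ preserves this property, since cutting $C$ leaves the former genus-one piece connected and the other smoothed nodes (disjoint from $C$) only attach further components through additional gluings, so the genus-three surface stays connected after cutting $C$. For the inductive step, I would strengthen the inductive hypothesis of Lemma \ref{lemma_Marithmetic} to require a \emph{non-separating} Euclidean cylinder on every relevant component in genus $g-1$. The same degeneration argument then supplies a non-separating Euclidean cylinder $C$ on the top vertex of the resulting two-level graph, and smoothing the nodes connecting the top genus-$(g-1)$ component to the bottom genus-zero component preserves non-separation by the identical topological reasoning as in the base case.

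The main step to verify is the first topological claim: that the cylinder produced by bubbling a handle is indeed a meridian of the new handle, hence non-separating on the bubbled component. This is intrinsic to the bubbling operation described in \cite[Section 3]{CG22} and is independent of $k$, so I expect it to follow directly from unwinding the definition. Once that is in place, the remainder of the argument reduces to the elementary observation that non-separation of a simple closed curve on a subsurface is preserved when one attaches additional components via smoothed nodes disjoint from the curve.
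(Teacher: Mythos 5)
Your central premise --- that in Lemmas \ref{lemma_genus3} and \ref{lemma_Marithmetic} the Euclidean cylinder is ``ultimately obtained by smoothing a horizontal node produced by the bubbling-a-handle operation'' --- is not accurate, and this is where the proposal breaks. Both of those proofs split into two cases according to the dichotomy of Lemma \ref{lemma_degeneratedgraph}: either the maximally degenerate level graph has a horizontal edge, in which case the Euclidean cylinder is simply the un-pinched cylinder corresponding to that horizontal node and no bubbling is involved, or there are no horizontal edges and the bubbling construction is used. In the horizontal-edge case there is no control whatsoever over the topology of the core curve: if the horizontal edge is a bridge between two distinct vertices of the graph, the smoothed cylinder is separating, hence null-homologous, and your meridian argument says nothing about it. Since Lemma \ref{lemma_degeneratedgraph} does not let you choose a degeneration without horizontal edges, this case cannot be discarded, and because you propose to strengthen the inductive hypothesis of Lemma \ref{lemma_Marithmetic}, the gap propagates through the base case $g=3$ and through every inductive step. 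The part of your argument that is sound --- the core curve of a bubbled cylinder is non-separating on the bubbled component (indeed it is half of a symplectic basis, as in Lemma \ref{lemma_genus1cyl}), and a simple closed curve that is non-separating on one irreducible component of a stable curve stays non-separating after smoothing nodes disjoint from it --- is correct but only covers the no-horizontal-edge branch.

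The paper avoids this issue by a different mechanism: it takes the (possibly separating) Euclidean cylinder furnished by Lemma \ref{lemma_Marithmetic} as a black box and, whenever it is separating, collapses it in the direction of a cross curve. Separation forces the cross curve to join two \emph{distinct} singularities, so the collapse merges them and produces a boundary point whose top-level vertex is a genus $g$ surface with strictly fewer singularities; one then reapplies Lemma \ref{lemma_Marithmetic} and iterates. The process terminates either with a non-separating Euclidean cylinder or in the minimal stratum $\Omega^k\mathcal{M}_g(k(2g-2))$, where any cross curve is closed and intersects the core curve once, forcing the cylinder to be non-null-homologous; the configuration is then smoothed back into $K$. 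If you want to rescue your approach, you would need to supply an argument of this kind (or some other device) for the horizontal-edge case; as written, the proposal does not prove the lemma.
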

\begin{proof}
By Lemma \ref{lemma_Marithmetic}, there is a surface $(X,\xi)$ in $K$ with a Euclidean cylinder $C$. If $C$ does not separate the surface, we are done. By assuming otherwise, any saddle connection which intersects a core curve of $C$ once must not be closed i.e. connects two different singularities. First we perturb the surface so that on each boundary component of $C$, every saddle connection is hat homologous to each other. Hence, collapsing $C$ will not change the genus. Consider a saddle connection $s$ contained in the closure $\overline{C}$ that intersects its core curves once. Such saddle connections are called cross curves. By collapsing $C$ in the direction of $s$, we collide these singularities and obtain a $(1/k)$-translation surface $(X',\xi')$ of genus $g$ with less singularities. Thus, along this path we converge to a point on the boundary whose level graph has a top level vertex representing $(X',\xi')$. In the ambient component of $(X',\xi')$ in its stratum, Lemma \ref{lemma_Marithmetic} implies there is a surface with a Euclidean cylinder $C'$. If $C'$ separates the surface, we again perturb the surface so that each boundary component consists of only hat homologous saddle connections and collapse $C'$ in the direction of one of its cross curves. Meanwhile then, we are converging to another point on the boundary of $K$ whose top level vertex is a genus $g$ surface with even fewer singularities than originally.

We repeat this process of collapsing a separating Euclidean cylinder and re-applying Lemma \ref{lemma_Marithmetic} until a cylinder from Lemma \ref{lemma_Marithmetic} is not null-homologous or the stratum of the top level vertex is $\Omega^k\mathcal{M}_g(k(2g-2))$. If the latter happens, the cross curve of the Euclidean cylinder given to us by Lemma \ref{lemma_Marithmetic} is closed. Therefore, the cylinder is not null-homologous since its core curves intersect a closed curve once. 

Once we obtain a Euclidean cylinder which is not null-homologous, we smooth out the multi-scale $k$-differential into $K$ while preserving this cylinder using Lemma \ref{lemma_cylpersists}. Because we never lost genus while degenerating, the cylinder remains non-separating.
\end{proof}

\subsection{Classification of $\mathcal{M}$} \label{subsection_classification}
Using now that $\mathcal{F(M)}$ is arithmetic, we can show all primitive eigenspaces $H^1(\hat{X};\mathbb{C})_{\zeta^\ell}$ are contained in $p(T_{(\hat{X},\hat{\omega})}\mathcal{M})$ (Lemma \ref{lemma_eigenspacefullrank} below). By summing up the dimension of these eigenspaces, we will deduce $\mathcal{M}$ is high rank (Lemma \ref{lemma_Mhighrank}). 

\begin{lemma} \label{lemma_eigenspacefullrank}
Let $k>2$ be prime and $\ell \in \{1,...,k-1\}$. If $\mathcal{F(M)}$ is arithmetic, then $H^1(\hat{X};\mathbb{C})_{\zeta^\ell}$ is contained in $p(T_{(\hat{X},\hat{\omega})}\mathcal{M})$ for any $(\hat{X},\hat{\omega}) \in \mathcal{N}$.
\end{lemma}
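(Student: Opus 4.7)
The plan is to use the arithmeticity hypothesis together with a Galois-theoretic argument to spread the single eigenspace $H^1(\hat{X};\mathbb{C})_\zeta$ (which we already have by Lemma \ref{lemma_zetaeigenspace}) across all the nontrivial $\tau^*$-eigenspaces.

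First, I would recall that forgetting marked points does not change the image $p(T_{(\hat{X},\hat{\omega})}\mathcal{M})$, so the arithmeticity of $\mathcal{F}(\mathcal{M})$ implies that $V := p(T_{(\hat{X},\hat{\omega})}\mathcal{M}) \subset H^1(\hat{X};\mathbb{C})$ is defined over $\mathbb{Q}$, i.e.\ $V = \mathbb{C}\otimes_{\mathbb{Q}} V_{\mathbb{Q}}$ for some $V_{\mathbb{Q}} \subset H^1(\hat{X};\mathbb{Q})$. In particular, if we base change to $\mathbb{Q}(\zeta)$, the subspace $V_{\mathbb{Q}(\zeta)} := \mathbb{Q}(\zeta)\otimes_{\mathbb{Q}} V_{\mathbb{Q}}$ is preserved by the natural coefficient action of $\mathrm{Gal}(\mathbb{Q}(\zeta)/\mathbb{Q})$ on $H^1(\hat{X};\mathbb{Q}(\zeta))$.

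Next, I would exploit how the Galois action interacts with the $\tau^*$-decomposition. Because $\tau^*$ acts on $H^1(\hat{X};\mathbb{Q})$, it commutes with the Galois action on coefficients, and for $\sigma_a \in \mathrm{Gal}(\mathbb{Q}(\zeta)/\mathbb{Q})$ sending $\zeta \mapsto \zeta^a$, one checks directly from $\tau^* v = \zeta^\ell v$ that
\[
\tau^*(\sigma_a v) = \sigma_a(\tau^* v) = \sigma_a(\zeta^\ell) \sigma_a v = \zeta^{a\ell}\sigma_a v,
\]
so $\sigma_a$ carries the $\mathbb{Q}(\zeta)$-form $H^1(\hat{X};\mathbb{Q}(\zeta))_{\zeta^\ell}$ isomorphically onto $H^1(\hat{X};\mathbb{Q}(\zeta))_{\zeta^{a\ell}}$. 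From Lemma \ref{lemma_zetaeigenspace} together with the equality $H^1(\hat{X};\mathbb{C})_\zeta = \mathbb{C}\otimes_{\mathbb{Q}(\zeta)} H^1(\hat{X};\mathbb{Q}(\zeta))_\zeta$, we see that $H^1(\hat{X};\mathbb{Q}(\zeta))_\zeta \subset V_{\mathbb{Q}(\zeta)}$.

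Finally, since $k$ is prime, the cyclotomic polynomial $\Phi_k$ is irreducible over $\mathbb{Q}$ and $\mathrm{Gal}(\mathbb{Q}(\zeta)/\mathbb{Q})$ acts transitively on $\{\zeta, \zeta^2, \ldots, \zeta^{k-1}\}$. Applying the $\sigma_a$'s to a $\mathbb{Q}(\zeta)$-basis of $H^1(\hat{X};\mathbb{Q}(\zeta))_\zeta$ produces, by the previous step, a $\mathbb{Q}(\zeta)$-basis of each $H^1(\hat{X};\mathbb{Q}(\zeta))_{\zeta^\ell}$ lying inside the Galois-invariant subspace $V_{\mathbb{Q}(\zeta)}$, so each $H^1(\hat{X};\mathbb{Q}(\zeta))_{\zeta^\ell} \subset V_{\mathbb{Q}(\zeta)}$ for $\ell \in \{1,\dots,k-1\}$. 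Tensoring up to $\mathbb{C}$ yields the desired inclusion $H^1(\hat{X};\mathbb{C})_{\zeta^\ell} \subset V$ for every $\ell \in \{1,\dots,k-1\}$. The only delicate point is bookkeeping the fact that although $V$ itself is a $\mathbb{C}$-subspace, the Galois argument happens on the $\mathbb{Q}(\zeta)$-form, and one must confirm that the complex eigenspaces arise by extending scalars from these $\mathbb{Q}(\zeta)$-forms; this is standard once the $\tau^*$-action is seen to be defined over $\mathbb{Q}$.
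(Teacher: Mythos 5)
Your argument is correct and is essentially the paper's proof: both exploit that arithmeticity makes $p(T\mathcal{F(M)})$ defined over $\mathbb{Q}$ (hence stable under Galois conjugation of coefficients), that $\tau^*$ is a rational operator so $\mathrm{Gal}(\mathbb{Q}(\zeta)/\mathbb{Q})$ permutes the $\mathbb{Q}(\zeta)$-forms of the eigenspaces transitively when $k$ is prime, and that Lemma \ref{lemma_zetaeigenspace} seeds the $\zeta$-eigenspace. The only cosmetic difference is that the paper justifies the $\mathbb{Q}$-definedness and Galois-stability via Wright's decomposition $H^1 = \bigl( \bigoplus_{\rho} V_{\rho} \bigr) \oplus W$ from \cite{Wri14}, while you assert it directly from the definition of the field of definition.
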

\begin{proof}
Let $H^1$ be the flat subbundle over $\mathcal{F(M)}$ whose fiber over $(S,\omega) \in \mathcal{F(M)}$ is $H^1(S;\mathbb{C})$. Wright showed in \cite{Wri14} that there is a flat bundle $W$ defined over $\mathbb{Q}$ and, for each field embedding $\rho : \textbf{k}(\mathcal{F(M)}) \to \mathbb{C}$, a flat bundle $V_{\rho}$ which is Galois conjugate to $V_{\text{Id}} = p(T\mathcal{F(M)})$ so that $$H^1 = \left( \bigoplus_{\rho} V_{\rho} \right) \oplus W.$$ Because $\textbf{k}(\mathcal{F(M)}) = \mathbb{Q}$, this decomposition simplifies to $$H^1 = p(T\mathcal{F(M)}) \oplus W.$$ Moreover, $p(T\mathcal{F(M)})$ and $W$ defined over $\mathbb{Q}$ implies they are stable under all field automorphisms of $\mathbb{C}$. Consider any $\mathcal{F}((\hat{X},\hat{\omega})) \in \mathcal{F(N)}$ and $v \in H^1(\hat{X};\mathbb{Q}(\zeta))_{\zeta^\ell}$. Let $\varphi$ be the $\mathbb{Q}$-linear map that is the field automorphism in the Galois group of $\mathbb{Q}(\zeta)$ sending $\zeta^{\ell}$ to $\zeta$ (applied to each entry of a cohomology class). Seeing that $\tau^*$ is an integral operator, we obtain $$\tau^*(\varphi(v)) = \varphi(\tau^*(v)) = \varphi(\zeta^\ell v) =\zeta \varphi(v).$$ By Lemma \ref{lemma_zetaeigenspace}, $\varphi(v)$ is in the fiber of $p(T\mathcal{F(M)})$ over $\mathcal{F}((\hat{X},\hat{\omega}))$. Thus, $v$ is also contained in $p(T\mathcal{F(M)})$ as well. There is a basis for $H^1(\hat{X};\mathbb{C})_{\zeta^\ell}$ in $ H^1(\hat{X};\mathbb{Q}(\zeta))_{\zeta^\ell}$, so $H^1(\hat{X};\mathbb{C})_{\zeta^\ell}$ is contained in $p(T\mathcal{F(M)})$, and hence, $p(T\mathcal{M}
)$.
\end{proof}

For the component(s) of $\Omega^3\mathcal{M}_{2}(6)^{\text{prim}}$ or $\Omega^3\mathcal{M}_{2}(3,3)^{\text{prim}}$, a simple dimension count shows $\mathcal{M}$ is not automatically high rank if $p(T_{(\hat{X},\hat{\omega})}\mathcal{M})$ contains all the primitive eigenspaces. Therefore, we make the assumption below that $(k,g) \neq (3,2)$.

\begin{lemma} \label{lemma_Mhighrank}
Assume that $k>2$ is prime and $(k,g) \neq (3,2)$. If $\mathcal{F(M)}$ is arithmetic, then $\mathcal{M}$ is high rank.
\end{lemma}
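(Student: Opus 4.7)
The strategy is to combine the eigenspace contribution guaranteed by the arithmeticity hypothesis (via Lemma \ref{lemma_eigenspacefullrank}) with a cylinder-deformation argument in one exceptional case. Since $k$ is prime, the Galois group of $\mathbb{Q}(\zeta)/\mathbb{Q}$ acts transitively on the primitive $k$-th roots of unity and $\tau^*$ is an integer operator, so each primitive eigenspace $H^1(\hat{X};\mathbb{C})_{\zeta^\ell}$ has the same complex dimension $N=2g-2+A$ by Lemma \ref{lemma_projection}, where $A=n-\#\{j:k\mid m_j\}$. Consequently, Lemma \ref{lemma_eigenspacefullrank} gives $\dim_{\mathbb{C}}p(T_{(\hat{X},\hat{\omega})}\mathcal{M})\geq(k-1)N$. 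Using the Riemann-Hurwitz expression $\hat{g}=1+k(g-1)+\tfrac{k-1}{2}A$, the plan is to check that the inequality $(k-1)N\geq\hat{g}+2$ --- equivalent to $(k-2)(g-1)+\tfrac{k-1}{2}A\geq 3$ --- holds automatically whenever $k\geq 5$, or $k=3$ with $g\geq 4$, or $k=3$ with $g=3$ and $A\geq 1$. In all these cases $\mathrm{rank}(\mathcal{M})\geq\hat{g}/2+1$ follows immediately.

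The sole exceptional case is $(k,g)=(3,3)$ with $A=0$, meaning every entry of $\mu$ is divisible by $3$, in which case $\hat{g}=7$ and the primitive eigenspaces contribute only complex dimension $8$. Since $p(T\mathcal{M})$ is a complex symplectic subspace of $H^1(\hat{X};\mathbb{C})$ (Avila--Eskin--M\"oller, Filip), its dimension is always even, so high rank here requires $\dim p(T\mathcal{M})\geq 10$. To produce the missing direction, the plan is to invoke Lemma \ref{lemma_notnullcyl} to choose $(X,\xi)\in K$ carrying a Euclidean cylinder $C$ whose core curve $\gamma$ is not null-homologous. Lifting to a cylinder $\hat{C}\subset\hat{X}$ with core curve $\hat{\gamma}$ and applying the Cylinder Deformation Theorem to its $\mathcal{M}$-parallel equivalence class $\mathcal{C}$ yields
\[
v \;=\; \sum_{\hat{C}_j\in\mathcal{C}} h_j\, I(\hat{\gamma}_j)\;\in\; T_{(\hat{X},\hat{\omega})}\mathcal{M}.
\]
Projecting $p(v)$ onto the $\tau$-invariant part $H^1(\hat{X};\mathbb{C})_1$ by the averaging operator $\tfrac{1}{k}\sum_{\ell=0}^{k-1}(\tau^*)^\ell$ and using the identity $\sum_{\ell}(\tau^*)^\ell\hat{\gamma}_j^*=\pi^*(\pi_*\hat{\gamma}_j)^*$ produces the class $\tfrac{1}{k}\pi^*\bigl(\sum_j h_j\,\gamma_j^*\bigr)$, with $\gamma_j=\pi(\hat{\gamma}_j)$. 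By Lemma \ref{lemma_parallelsaddle}, each $\gamma_j$ is hat-homologous to $\gamma$ on a generic surface in $\mathcal{N}$.

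The crux is to establish $\sum_j h_j\gamma_j^*\neq 0$ in $H^1(X;\mathbb{C})$. Perturbing slightly within $\mathcal{N}$ (which preserves $\hat{C}$), one can arrange that the hat-homology class of $\gamma$ on the chosen surface contains no other cylinder core curve, reducing $\mathcal{C}$ to $\{\hat{C}\}$ and yielding $\sum_j h_j\gamma_j^*=h\gamma^*\neq 0$. Because $\pi^*$ is injective, the $\tau$-invariant component of $p(v)$ is then nonzero and lies in $p(T\mathcal{M})\cap H^1(\hat{X};\mathbb{C})_1$, linearly independent from every primitive eigenspace. Thus $\dim_{\mathbb{C}}p(T\mathcal{M})\geq 9$, and symplecticity forces the dimension to be even, so $\dim\geq 10$ and $\mathrm{rank}(\mathcal{M})\geq 5\geq\hat{g}/2+1$.

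The main obstacle will be rigorously establishing the genericity reduction $\mathcal{C}=\{\hat{C}\}$, or more generally showing that $\sum_j h_j\gamma_j^*$ cannot vanish. If the hat-homology class of $\gamma$ inevitably contains multiple cylinder core curves even after perturbation, one would instead need to exploit the positivity of the $h_j$ together with the non-null-homologous nature of $\gamma$ (for instance by pairing with a specific class in $H^1(X;\mathbb{C})$) to preclude the cancellation.
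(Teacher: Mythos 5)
Your strategy is essentially the paper's: the primitive-eigenspace dimension count coming from Lemma \ref{lemma_eigenspacefullrank} handles most $(k,g)$, and in the exceptional case a non-null-homologous Euclidean cylinder (Lemma \ref{lemma_notnullcyl}) plus the Cylinder Deformation Theorem supplies an extra tangent direction in the $\tau$-invariant part. However, there are two genuine gaps. First, your case enumeration does not exhaust the lemma's scope: $k=3$ with $g\in\{0,1\}$ is never addressed, and there the inequality $(k-2)(g-1)+\tfrac{k-1}{2}A\geq 3$ can genuinely fail (for $k=3$, $g=0$ one can have $A=3$ while $4$ is needed; for $k=3$, $g=1$, e.g.\ $\mu=(1,-1)$ gives $A=2$ while $3$ is needed), so no amount of ``checking'' saves the inequality route and a different argument is required. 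The paper handles these by noting that for $g=0$ the invariant eigenspace $H^1(\hat{X};\mathbb{C})_1\cong H_1(X;\mathbb{C})$ vanishes, so the primitive eigenspaces already give full rank, and for $g=1$ the deficit from full rank is at most one, combined with a short argument that $N\geq 2$. Relatedly, ``automatic for $k\geq 5$'' is only automatic when $g\geq 2$; for $g\leq 1$ you need lower bounds on $A$ (such as $A\geq 2$ when $g=1$), which are true but require an argument about poles that you do not supply. Since the lemma is also invoked for $g\leq 2$ in Theorem \ref{theorem_LGC}, these cases are in scope.

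Second, in the exceptional case you correctly isolate the crux---ruling out cancellation in $\sum_j h_j\gamma_j^*$---but you leave it unresolved, and the fix you sketch (perturb slightly within $\mathcal{N}$ so that the hat-homology class of $\gamma$ contains no other cylinder core curve) does not work as stated: by Lemma \ref{lemma_parallelsaddle} the other cylinders in the equivalence class are generically hat homologous to $\hat{\gamma}$, so they remain parallel cylinders under any small perturbation inside $\mathcal{N}$ and cannot be removed this way. The paper's resolution is the shear-and-collapse deformation from the proof of Lemma \ref{lemma_simplecylinder}: because the projected cylinder is Euclidean, Lemma \ref{lemma_sameintersectionsimple} makes the $\mathcal{N}$-parallel cylinders and their $\tau$-orbits pairwise disjoint, so one can collapse all cylinders other than $\hat{C}$ while staying in $\mathcal{N}$, reaching a surface on which $\hat{C}$ is the lone cylinder in its $\mathcal{M}$-parallel class; then $v=\sum_{j}(\tau^*)^j(\hat{\alpha}^*)\in p(T\mathcal{M})$ is nonzero precisely because $\pi(\hat{\alpha})$ is not null-homologous. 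Your refinements---restricting the special argument to $(3,3)$ with $A=0$ and using symplecticity of $p(T\mathcal{M})$ to force even dimension---are fine, but without the collapse step and the missing low-genus cases the proof is incomplete.
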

\begin{proof}
If $\mathcal{F(M)}$ is arithmetic, then $H^1(\hat{X};\mathbb{C})_1$ is the only eigenspace not yet proved to be in $p(T_{(\hat{X},\hat{\omega})}\mathcal{M})$ after Lemma \ref{lemma_eigenspacefullrank}. We have the isomorphism $$H^1(\hat{X};\mathbb{C})_1 \cong  H_1(X;\mathbb{C})$$ so $H^1(\hat{X};\mathbb{C})_1$ is of dimension $2g$. Therefore, the largest possible deficit of $\text{rank}(\mathcal{M})$ from full rank is $g$. When $g=0$, we automatically arrive at full rank after Lemma \ref{lemma_eigenspacefullrank}. Using that $g \leq 1 + \frac{\hat{g}-1}{k} $ from the Riemann-Hurwitz formula, we compute $$\text{rank}(\mathcal{M}) \geq \hat{g} - g \geq \hat{g} -  \left( 1 + \frac{\hat{g} -1}{k}  \right)= \left(1- \frac{1}{k}\right)\hat{g} - \left(1 - \frac{1}{k}\right).$$ Therefore, $\mathcal{M}$ is high rank if $$\left(1- \frac{1}{k}\right)\hat{g} - \left(1 - \frac{1}{k}\right) \geq \frac{\hat{g}}{2}+1,$$ or equivalently, $$\left(\frac{1}{2}- \frac{1}{k}\right) \hat{g} \geq 2 - \frac{1}{k}.$$ Using again that $\hat{g} \geq 1 + k(g-1)$, it suffices for the inequality $$\left(\frac{1}{2}- \frac{1}{k}\right) \left(1 + k(g-1)\right) \geq 2 - \frac{1}{k}$$ to be satisfied. One can deduce after taking partial derivatives that as $g$ increases, only the left-hand side increases because $k>2$, and when $k$ increases, the left-hand side increases faster than the right-hand side. The inequality is satisfied when $(k,g) = (5,2)$ and $(k,g) = (3,4)$, so $\mathcal{M}$ is high rank when either $k >3$ is prime and $g >1$ or $k = 3$ and $g >3$. 

We next focus on when $(k,g) = (3,3)$. By Lemma \ref{lemma_notnullcyl}, there is a surface in $K \subset \Omega^3\mathcal{M}_{3}(\mu)^{\text{prim}}$ with a Euclidean cylinder which is not null-homologous. Let $\hat{C}$ be a cylinder on the holonomy cover which projects to this cylinder. Let $\hat{\alpha}$ be a core curve of $\hat{C}$. As in the proof of Lemma \ref{lemma_simplecylinder}, we can shear and then collapse all cylinders $\mathcal{N}$-parallel to $\hat{C}$ so that it is the only cylinder in its $\mathcal{N}$-parallel, and moreover $\mathcal{M}$-parallel, equivalence class on a surface $(\hat{X}',\hat{\omega}')$ in $\mathcal{N}$. Suppose $\tau'$ is the $k$-cyclic automorphism on $\hat{X}'$. It follows by symmetry that $(\tau'^*)^j(\hat{\alpha}^*)$ is the lone cylinder in its $\mathcal{M}$-parallel equivalence class. By the Cylinder Deformation Theorem, $\hat{\alpha}^*,\tau'^*(\hat{\alpha}^*),...,(\tau'^*)^{k-1}(\hat{\alpha}^*)$ are all contained in $p(T_{(\hat{X}',\hat{\omega}')}\mathcal{M})$ and so is $$v := \hat{\alpha}^*+\tau'^*(\hat{\alpha}^*)+...+(\tau'^*)^{k-1}(\hat{\alpha}^*).$$ Because $\pi(\hat{\alpha})$ is not null-homologous, $v$ is non-trivial by the isomorphism $H^1(\hat{X}';\mathbb{C})_1 \cong  H_1(X';\mathbb{C})$ where $X' = \hat{X}'/\tau'.$ Therefore, $v$ is a non-trivial element of $H^1(\hat{X}';\mathbb{C})_1 \cap p(T_{(\hat{X}',\hat{\omega}')}\mathcal{M})$ and $$\text{rank}(\mathcal{M}) \geq 1 + \frac{N + N}{2} \geq 1 + \frac{2(3) -2 + 2(3) -2}{2} = 5.$$ The deficit from full rank is at most $g-1 = 3 - 1 = 2$, so we achieve high rank.

Finally, we consider the case when $g=1$. Because the deficit from full rank is at most $g$, here it is at most one. Therefore, we achieve high rank for all $k>2$ prime if $N \geq 2$ because $$\sum_{\ell=1}^{k} \text{dim}_{\mathbb{C}}H^1(\hat{X};\mathbb{C})_{\zeta^\ell} = (k-1)N.$$ When $g =1$, Lemma \ref{lemma_projection} implies $N = n - \text{card}\{m_1,...,m_n \cap k\mathbb{Z}\}$. Because $\mu$ does not contain entries less than or equal to $-k$, $N$ is at least the number of poles, denoted $P$. $P$ must be non-zero because the sum of the entries of $\mu$ is zero and $\mu$ must be non-empty for the stratum to have primitive $k$-differentials. If $P=1$, then the positive entries of $\mu$ must sum to a non-integer multiple of $k$. Therefore, there is at least one zero whose order is a non-integer multiple of $k$ and $N \geq 2$. When $P \geq 2$, then $N \geq 2$ and we are done.
\end{proof}

Given two subvarieties $\mathcal{M}'$ and $\mathcal{M}''$ inside a stratum, we say they are the same \textit{up to marked points} if they project to the same subvariety under $\mathcal{F}$.
Because $\mathcal{M}$ is high rank, $\mathcal{M}$ is either a component or a locus of holonomy covers of a stratum of quadratic differentials up to marked points by Theorem \ref{theorem_highrank}. 

Suppose that $(Y,\eta)$ is a $(2k)$-differential. Similarly constructed as its holonomy cover, there is a canonical intermediate $2$-cyclic cover which is a $k$-differential $(X,\xi)$ such that the projection map $\pi_2:X \to Y$ satisfies $\pi_2^*\eta = \xi^2$. The holonomy cover $(\hat{X},\hat{\omega})$ of $(Y,\eta)$ is the holonomy cover of $(X,\xi)$ up to marked points. Moreover, we obtain the following commutative diagram by the universal property of canonical covers. 
\[ \begin{tikzcd}
(\hat{X},\hat{\omega}) \arrow{r} \arrow[swap]{d} & (\hat{Y},\hat{\eta}) \arrow{d}{\pi_k} \\%
(X,\xi) \arrow{r}{\pi_2}& (Y,\eta)
\end{tikzcd}
\]  
The canonical intermediate $k$-cyclic cover $(\hat{Y}, \hat{\eta})$ is a quadratic differential whose projection $\pi_k$ to $Y$ satisfies $\pi_k^*\eta = \hat{\eta}^k$.
In fact, these intermediate covers exist for all $k'$-differentials, rather than just $(2k)$-differentials, where $k' = dk''$ for any $d,k'' \in \mathbb{N}$. See \cite[proof of Proposition 5.5]{CG22} for more details. Following \cite[Lemma 3.15 (d)]{EV92}, a singularity $x$ of order $m$ on the $k'$-differential $(X',\xi')$ has $\text{gcd}(m,d)$ pre-images on the canonical intermediate $d$-cyclic cover. Consequently, the ramification index at a pre-image $\hat{x}$ of $x$ on the $d$-cyclic cover is $d/\text{gcd}(m,d)$, and we compute that the order $\hat{m}$ of $\hat{x}$ is \begin{equation} \label{equation_RHintermediate}
\hat{m} = \frac{m+k}{\text{\text{\text{gcd}}}(m,d)} - \frac{k}{d}.
\end{equation}

A translation surface $(S,\omega)$ is a  \textit{translation cover} if there is a translation surface $(Y,\sigma)$ of lower genus and branched covering $f:S \to Y$ such that $f^*\sigma = \omega$. A translation surface is \textit{minimal} if it is not a translation cover. Similarly, $(S,\omega)$ is a \textit{half-translation cover} if there is a half-translation surface $(W,q)$ of lower genus and branched covering $f:S \to W$ such that $f^*q = \omega^2$. 

\begin{lemma} \label{lemma_minimalcover}
Almost every surface in $\mathcal{N}$ is minimal.
\end{lemma}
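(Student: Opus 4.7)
The plan is to argue by contradiction using the classification of $\mathcal{M}$ established in the previous subsections. Suppose, toward a contradiction, that some positive measure subset of $\mathcal{N}$ consists of non-minimal surfaces. Each such surface $(\hat{X},\hat{\omega})$ admits a minimal translation cover $f:(\hat{X},\hat{\omega}) \to (Y_0,\sigma_0)$ to a target of strictly smaller genus. The topological data of this cover (target genus, stratum of $\sigma_0$, degree, branching combinatorics) ranges over a countable set. By a standard countability argument, I can pass to a positive measure subset $S \subset \mathcal{N}$ on which every surface is a translation cover of a single fixed topological type.

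By Lemma \ref{lemma_NdenseinM}, almost every surface in $\mathcal{N}$ has dense $GL^+(2,\mathbb{R})$-orbit in $\mathcal{M}$, so I can choose some $M \in S$ whose orbit is dense in $\mathcal{M}$. The $GL^+(2,\mathbb{R})$-action preserves the property of being a translation cover of a fixed topological type (one postcomposes the covering map with the action on the base), and being such a cover is a closed condition in the ambient stratum, since periods of $\hat{\omega}$ must lie in the pullback of periods from $(Y_0,\sigma_0)$ and this is a closed linear condition. Taking the closure of $GL^+(2,\mathbb{R}) M$ therefore forces every surface in $\mathcal{M}$ to be a translation cover of the same fixed type.

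Now I apply the structural results for $\mathcal{M}$. By Lemma \ref{lemma_Mhighrank}, $\mathcal{M}$ is high rank, so Theorem \ref{theorem_highrank} says that, up to marked points, $\mathcal{F}(\mathcal{M})$ is either (i) a connected component of a stratum of abelian differentials, or (ii) the unmarked locus of holonomy covers of a stratum of primitive half-translation surfaces. In case (i), a connected component of a stratum has full rank, so a generic surface has absolute periods spanning all of $H^1(S;\mathbb{C})$ and in particular cannot factor through the pullback subspace $f^*H^1(Y_0;\mathbb{C})$, which has strictly smaller dimension $2\,\text{genus}(Y_0) < 2\hat{g}$. In case (ii), a primitive half-translation surface $(Y,\eta)$ cannot itself be a translation cover (its holonomy is genuinely in $\{\pm 1\}$, not $\{1\}$), and the corresponding holonomy cover $(\hat{X},\hat{\omega})$ is minimal: any further factorization $f:\hat{X} \to Y_0$ would, when combined with the deck involution of the holonomy cover, descend to give a nontrivial factorization of $(Y,\eta)$ through a lower genus half-translation surface, contradicting the primitivity of $(Y,\eta)$. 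Either way we obtain the required contradiction.

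The main obstacle I anticipate is case (ii), namely justifying that the holonomy cover of a primitive half-translation surface is itself minimal as a translation surface. The argument sketched above uses the canonical nature of the holonomy cover together with primitivity, but making it rigorous may require verifying that the deck involution acts compatibly with the putative factorization and that the quotient carries a genuine half-translation structure. A secondary subtlety is the measure-theoretic reduction to a single topological type of translation cover, which I would handle by enumerating the finite set of covers up to a fixed degree and invoking sigma-additivity.
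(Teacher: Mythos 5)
Your overall scheme matches the paper's: take a surface of $\mathcal{N}$ with dense $GL^+(2,\mathbb{R})$-orbit in $\mathcal{M}$ (Lemma \ref{lemma_NdenseinM}), note that a locus of translation covers of a fixed topological type is a closed, $GL^+(2,\mathbb{R})$-invariant (indeed affine invariant) locus, and derive a contradiction from the structure of $\mathcal{M}$. Your case (i) is fine in substance (the correct formulation is that $p(T\mathcal{M})$ would be contained in the flat subbundle $f^*H^1(Y_0;\mathbb{C})$ of complex dimension $2\,\mathrm{genus}(Y_0)<2\hat{g}$, contradicting full rank). The paper, however, does not pass through the classification of Theorem \ref{theorem_highrank} at all here: it simply observes that, since $\mathcal{M}$ is high rank (Lemma \ref{lemma_Mhighrank}), \cite[Lemma 2.1]{AW23} forces the locus of translation covers meeting $\mathcal{M}$ to be a proper affine invariant subvariety, contradicting density of the orbit.

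The genuine gap is your case (ii). You assert that the holonomy double cover $(\hat{X},\hat{\omega})$ of a primitive half-translation surface $(Y,\eta)$ is minimal, because a translation covering $f:\hat{X}\to Y_0$ would descend to a factorization of $(Y,\eta)$ through a lower-genus half-translation surface, ``contradicting primitivity.'' But primitivity of a quadratic differential means only that it is not globally the square of an abelian differential; it in no way precludes $(Y,\eta)$ from being a half-translation cover of a lower-genus surface. Pillowcase covers and square-tiled examples show that primitive quadratic differentials, and their holonomy double covers, can perfectly well be covers, so even granting the descent step (which itself requires the compatibility $f\circ J=\iota\circ f$ of the deck involution with $f$, a point you flag but do not resolve), no contradiction follows; as stated, the claimed minimality of all such holonomy covers is simply false. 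What you actually need, and what is true, is that a high rank variety cannot consist entirely of translation covers of a fixed lower-genus type: for such a locus one has $p(T)\subseteq f^*H^1(Y_0;\mathbb{C})$, so its rank is at most the base genus $h$, while Riemann--Hurwitz for a degree $d\geq 2$ cover gives $h\leq \frac{\hat{g}+1}{2}<\frac{\hat{g}}{2}+1$, contradicting Lemma \ref{lemma_Mhighrank}. This uniform rank argument, which is essentially what the paper invokes via \cite[Lemma 2.1]{AW23}, repairs case (ii) and in fact renders the case division through Theorem \ref{theorem_highrank} unnecessary.
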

\begin{proof}
By Lemma \ref{lemma_NdenseinM}, the orbit closure of almost every surface in $\mathcal{N}$ is $\mathcal{M}$. Say $(\hat{X},\hat{\omega})$ is any surface in $\mathcal{N}$ with a dense orbit in $\mathcal{M}$, and suppose it is not minimal. A locus of translation covers is an affine invariant subvariety, so $\overline{GL^+(2,\mathbb{R})(\hat{X},\hat{\omega})}$ consist entirely of covers of translation covers. Since $\mathcal{M}$ is high rank by Lemma \ref{lemma_Mhighrank}, \cite[Lemma 2.1]{AW23} implies $\overline{GL^+(2,\mathbb{R})(\hat{X},\hat{\omega})} \cap \mathcal{M}$ is a proper affine invariant subvariety inside $\mathcal{M}$ which contradicts our assumption on $(\hat{X},\hat{\omega})$.
\end{proof}

Lemma \ref{lemma_uniquecover} will imply when $K$ is hyperelliptic and covers the genus zero surfaces in the ambient stratum of $(Y,\eta)$ in the diagram above, $\mathcal{M}$ is a full locus of holonomy covers of surfaces in the stratum of $(\hat{Y},\hat{\eta})$ up to marked points. Moreover, $\mathcal{M}$ is not contained in a smaller locus of covers of a different stratum of quadratic differentials. 

\begin{lemma} \label{lemma_uniquecover}
Almost every surface in $\mathcal{N}$ is a degree two half-translation cover of at most one half-translation surface.
\end{lemma}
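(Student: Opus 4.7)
The plan is to argue by contradiction, leveraging the minimality of generic surfaces in $\mathcal{N}$ established in Lemma \ref{lemma_minimalcover}. Suppose for contradiction that a positive-measure set of surfaces $(\hat{X},\hat{\omega}) \in \mathcal{N}$ each admit two distinct degree two half-translation covers $f_1 \colon (\hat{X},\hat{\omega}) \to (W_1,q_1)$ and $f_2 \colon (\hat{X},\hat{\omega}) \to (W_2,q_2)$. Each $f_i$ comes equipped with a deck involution $\sigma_i \in \Aut(\hat{X})$, and since $f_i^* q_i = \hat{\omega}^2$ on a connected degree two cover, we must have $\sigma_i^* \hat{\omega} = -\hat{\omega}$.

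First I would verify that truly distinct half-translation covers produce distinct involutions $\sigma_1 \neq \sigma_2$: if instead $\sigma_1 = \sigma_2$, then both covers factor through the same quotient $\hat{X}/\langle \sigma_1 \rangle$ equipped with the same induced quadratic differential, so $(W_1,q_1)$ and $(W_2,q_2)$ would be isomorphic as half-translation surfaces. Given $\sigma_1 \neq \sigma_2$, set $\tau := \sigma_1 \sigma_2 \in \Aut(\hat{X})$; a one-line calculation yields
\[
\tau^* \hat{\omega} = \sigma_2^*\bigl(\sigma_1^* \hat{\omega}\bigr) = \sigma_2^*(-\hat{\omega}) = \hat{\omega},
\]
so $\tau$ is a nontrivial $\hat{\omega}$-preserving finite-order automorphism of $\hat{X}$. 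The quotient map then exhibits $(\hat{X},\hat{\omega})$ as a translation cover of $(\hat{X}/\langle \tau \rangle, \omega')$, a translation surface of strictly smaller genus by Riemann-Hurwitz (since $\tau$ has order at least two). This contradicts Lemma \ref{lemma_minimalcover}, which guarantees that almost every surface in $\mathcal{N}$ is minimal, and so the assumed positive-measure exceptional set cannot exist.

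The main obstacle I foresee is the first reduction: carefully pinning down the notion of when two half-translation covers are considered distinct, and verifying that genuinely different targets force $\sigma_1 \neq \sigma_2$ on $\hat{X}$. Once this bookkeeping is in place, the essential content reduces to the composition identity above and a single appeal to the preceding minimality lemma, so no further geometric input beyond Lemma \ref{lemma_minimalcover} should be required.
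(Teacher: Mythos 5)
Your proof is essentially correct, and it rests on the same pivot as the paper: reduce everything to the minimality of generic surfaces in $\mathcal{N}$ (Lemma \ref{lemma_minimalcover}). The difference is in the second step. The paper simply quotes \cite[Lemma 3.3]{AW21}, which says a minimal translation surface is a degree two cover of at most one half-translation surface; you instead reprove this degree-two statement directly, by noting that two genuinely distinct degree two half-translation covers have distinct deck involutions $\sigma_1 \neq \sigma_2$ with $\sigma_i^*\hat{\omega} = -\hat{\omega}$, so $\tau = \sigma_1\sigma_2$ is a nontrivial automorphism with $\tau^*\hat{\omega} = \hat{\omega}$, exhibiting $(\hat{X},\hat{\omega})$ as a translation cover of the strictly lower genus quotient $(\hat{X}/\langle\tau\rangle,\omega')$ and contradicting minimality. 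This buys you a self-contained argument in place of an external citation (the cited lemma is more general, applying beyond degree two, which is why the paper can afford the one-line proof); your version also implicitly uses finiteness of $\Aut(\hat{X})$ (so $\tau$ has finite order) and that an invariant abelian differential descends holomorphically to the quotient, both standard.

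One justification in your write-up is not right as stated: connectedness of the degree two cover does \emph{not} force $\sigma_i^*\hat{\omega} = -\hat{\omega}$. From $f_i^*q_i = \hat{\omega}^2$ you only get $\sigma_i^*\hat{\omega} = \pm\hat{\omega}$, and the $+$ sign genuinely occurs when $q_i$ is globally the square of an abelian differential on $W_i$. The fix is immediate and uses nothing new: if $\sigma_i^*\hat{\omega} = +\hat{\omega}$, then $\hat{\omega}$ descends to an abelian differential $\eta_i$ on the lower genus surface $W_i$ with $f_i^*\eta_i = \hat{\omega}$, so $(\hat{X},\hat{\omega})$ is a translation cover, again contradicting Lemma \ref{lemma_minimalcover}. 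With that patch (and your observation that $\sigma_1 = \sigma_2$ forces the two targets to be isomorphic as half-translation surfaces, which is correct), the argument is complete.
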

\begin{proof}
By Lemma \ref{lemma_minimalcover}, almost every surface inside $\mathcal{N}$ is minimal. Hence, for generic surfaces in $\mathcal{N}$, \cite[Lemma 3.3]{AW21} implies they are degree two covers of at most one half-translation surface.
\end{proof}

\begin{lemma} \label{lemma_involutiondescends}
Suppose $\mathcal{N}$ consists entirely of holonomy covers of a half-translation surfaces up to marked points. Then a component $K \subset \Omega^k\mathcal{M}_g(\mu)^{\emph{prim}}$ consists entirely of canonical intermediate $2$-cyclic covers of a stratum of $(2k)$-differentials.
\end{lemma}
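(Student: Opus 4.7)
The plan is as follows: for a generic $(X,\xi) \in K$ with holonomy cover $(\hat X,\hat\omega) \in \mathcal{N}$, use the hypothesis to produce an involution $\sigma:\hat X \to \hat X$ with $\sigma^*\hat\omega = -\hat\omega$, show that $\sigma$ commutes with $\tau$, form $Y := \hat X/\langle\sigma,\tau\rangle$ together with a descended $(2k)$-differential $\eta$, and verify that $(X,\xi)$ is the canonical intermediate $2$-cyclic cover of $(Y,\eta)$. Then I would extend from generic surfaces to all of $K$ by closedness and irreducibility.

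For the commutation, I would introduce $\sigma' := \tau\sigma\tau^{-1}$ and compute directly from $\tau^*\hat\omega = \zeta\hat\omega$ that $\sigma'^*\hat\omega = -\hat\omega$. Then $(\sigma\sigma')^*\hat\omega = \hat\omega$, so $\sigma\sigma'$ is a translation automorphism of $(\hat X,\hat\omega)$. On a full-measure subset $K' \subset K$, Lemma \ref{lemma_uniquecover} ensures the half-translation surface being covered is unique up to isomorphism; combined with the triviality of the translation automorphism group of a generic $(\hat X,\hat\omega) \in \mathcal{N}$, this forces $\sigma\sigma' = \mathrm{id}$, so $\sigma$ and $\tau$ commute on $K'$.

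Next I would form $G := \langle\sigma,\tau\rangle$, which is cyclic of order $2k$ (since $k>2$ is prime and $\gcd(2,k)=1$), and set $Y := \hat X/G$. The form $\hat\omega^{2k}$ is $G$-invariant, since $\sigma^*(\hat\omega^{2k}) = (-\hat\omega)^{2k} = \hat\omega^{2k}$ and $\tau^*(\hat\omega^{2k}) = \zeta^{2k}\hat\omega^{2k} = \hat\omega^{2k}$, hence descends to a $(2k)$-differential $\eta$ on $Y$. Because $\sigma$ commutes with $\tau$, it descends to an involution $\bar\sigma$ on $X = \hat X/\langle\tau\rangle$ with $X/\bar\sigma = Y$, producing a degree-two cover $\pi_2: X \to Y$. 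Pulling back to $\hat X$ via $\pi:\hat X\to X$, both $\pi^*\pi_2^*\eta$ and $\pi^*(\xi^2)$ equal $\hat\omega^{2k}$, and injectivity of $\pi^*$ on holomorphic differentials yields $\pi_2^*\eta = \xi^2$. So $(X,\xi)$ is the canonical intermediate $2$-cyclic cover of $(Y,\eta)$.

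To extend to all of $K$: admitting an involution $\bar\sigma$ on $X$ with $\bar\sigma^*\xi = -\xi$ is a closed condition on $\Omega^k\mathcal{M}_g(\mu)^{\mathrm{prim}}$, so it propagates from the dense $K'$ to the irreducible connected component $K$. The combinatorial type of $\pi_2$ (and thus the ambient stratum of $\eta$) is locally constant on $K$, so by connectedness all surfaces in $K$ cover a single stratum of $(2k)$-differentials. The hard part will be the commutation step: even though the character computation already realizes $\sigma\sigma'$ as a translation automorphism, one must combine Lemma \ref{lemma_uniquecover} with a genericity statement about the translation automorphism group of surfaces in $\mathcal{N}$ to rule out that $\sigma\sigma'$ is a nontrivial such automorphism.
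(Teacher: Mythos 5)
Your overall route is the paper's route: produce the holonomy involution, show it commutes with the deck transformation $\tau$ by observing that the commutator is a translation automorphism, and then pass to the quotient to exhibit $(X,\xi)$ as the canonical intermediate $2$-cyclic cover of a $(2k)$-differential. But the step you yourself flag as ``the hard part'' is exactly where your write-up has a gap: you assert that a generic $(\hat X,\hat\omega)\in\mathcal N$ has trivial translation automorphism group and propose to get this from Lemma \ref{lemma_uniquecover} plus an unspecified ``genericity statement.'' Lemma \ref{lemma_uniquecover} is not the right tool --- it only says a generic surface in $\mathcal N$ is a degree-two half-translation cover of at most one half-translation surface, which says nothing directly about automorphisms $T$ with $T^*\hat\omega=\hat\omega$. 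As written, the commutation step is therefore not established, and with it the descent of the involution to $X$.

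The missing ingredient is Lemma \ref{lemma_minimalcover}, and this is precisely how the paper closes the argument. If $T:=\sigma\sigma'=\tau\sigma\tau^{-1}\sigma^{-1}$ is not the identity, then since $T^*\hat\omega=\hat\omega$, the form $\hat\omega$ descends to the quotient $\hat X/\langle T\rangle$, which by Riemann--Hurwitz has strictly smaller genus (as $\hat g\geq 2$), so $(\hat X,\hat\omega)$ is a translation cover; this contradicts the minimality of almost every surface in $\mathcal N$ guaranteed by Lemma \ref{lemma_minimalcover}. With that substitution your argument goes through on a full-measure set, and your subsequent construction of $Y=\hat X/\langle\sigma,\tau\rangle$, the descended $(2k)$-differential $\eta$, and the verification $\pi_2^*\eta=\xi^2$ match the paper's quotient construction; your explicit closedness argument for propagating the conclusion from the generic locus to all of $K$ is a reasonable (and slightly more careful) supplement to what the paper leaves implicit.
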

\begin{proof}
First we claim the holonomy involution $J$ descends to an involution $j$ on $(X,\xi)$ such that $j^*\xi = -\xi$. This is equivalent to showing $\tau$ and $J$ commute, i.e. $J = \tau J \tau^{-1}$. Observe $J=J^{-1}$ and both $J^{-1}$ and $\tau J \tau^{-1}$ are involutions which $\xi$ is $(-1)$-invariant of. If $T := \tau J \tau^{-1}J^{-1}$ is not the identity, then since $T^*\xi = \xi$ and the abelian differential descends to the quotient, $(\hat{X},\hat{\omega})/T$ is a translation surface of smaller genus. This contradicts that $(\hat{X},\hat{\omega})$ is almost always minimal (Lemma \ref{lemma_minimalcover}).

Because the claim is true, we can consider the quotient $(X,\xi)/j$ which is a $(2k)$-differential whose  canonical intermediate $2$-cyclic cover is $(X,\xi)$.
\end{proof}

The following will imply $K$, unless a hyperelliptic component, cannot be $2$-cyclic covers of $(2k)$-differentials. This along with the previous Lemma implies then $\mathcal{N}$ cannot live in a locus of holonomy covers of a stratum of quadratic differentials up to marked points.

\begin{lemma} \label{lemma_notdoublecovers}
A component $K \subset \Omega^k\mathcal{M}_g(\mu)^{\emph{prim}}$ cannot consist entirely of canonical intermediate $2$-cyclic covers of surfaces in a stratum of positive genus $(2k)$-differentials.
\end{lemma}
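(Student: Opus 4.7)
The plan is to derive a contradiction by a dimension count. Suppose for contradiction that every $(X,\xi) \in K$ is the canonical intermediate $2$-cyclic cover $\pi_2\colon (X,\xi) \to (Y,\eta)$ for some $(Y,\eta)$ in a component $L \subset \Omega^{2k}\mathcal{M}_{g_Y}(\mu_Y)$ with $g_Y \geq 1$. The key input is the ramification formula \eqref{equation_RHintermediate} applied with $d = 2$ and underlying differential order $2k$: a singularity of odd order $m$ on $Y$ lifts to a single preimage of order $m + k$ on $X$, while a singularity of even order $m$ lifts to two preimages of order $m/2$. Writing $\mu_Y = (m_1, \ldots, m_{n_Y})$ and letting $a$ be the number of odd-order entries of $\mu_Y$ (necessarily even, since $\sum_i m_i = 2k(2g_Y - 2)$ is even), Riemann--Hurwitz gives $g = 2g_Y - 1 + a/2$ and $n = 2n_Y - a$. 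Combined with Theorem~\ref{theorem_dimofstrata}, this yields
\[
\dim_{\mathbb{C}} K \;=\; 2g + n - 2 \;=\; 4g_Y + 2n_Y - 4 \;=\; 2(2g_Y + n_Y - 2).
\]

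Next I bound the dimension of the locus of canonical $2$-cyclic covers inside $\Omega^k\mathcal{M}_g(\mu)$. Since the cover is deterministic and the base is recovered by quotienting $(X,\xi)$ by the involution from Lemma~\ref{lemma_involutiondescends} (which is determined by $\xi$ up to finite ambiguity coming from any extra automorphisms of $X$), the natural map $L \to \Omega^k\mathcal{M}_g(\mu)$ has generically finite fibers, and its image has dimension $\dim_{\mathbb{C}} L$. By Theorem~\ref{theorem_dimofstrata} we always have $\dim_{\mathbb{C}} L \leq 2g_Y + n_Y - 1$. The hypothesis that $K$ is covered by this image therefore forces
\[
2(2g_Y + n_Y - 2) \;=\; \dim_{\mathbb{C}} K \;\leq\; \dim_{\mathbb{C}} L \;\leq\; 2g_Y + n_Y - 1,
\]
i.e., $2g_Y + n_Y \leq 3$. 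Combined with $g_Y \geq 1$, the only possibilities are $(g_Y, n_Y) = (1, 0)$ and $(g_Y, n_Y) = (1, 1)$ with $\mu_Y = (0)$.

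In either remaining case, $\eta$ is a non-vanishing holomorphic $(2k)$-differential on an elliptic curve, so $\eta = c \cdot (dz)^{2k}$ for some constant $c$ and uniformizing $1$-form $dz$. Pulling back along $\pi_2$ and writing $\tilde z = z \circ \pi_2$, we get $\xi^2 = c \cdot (d\tilde z)^{2k}$, so $\xi = \pm \sqrt{c} \cdot (d\tilde z)^k$ is a $k$-th power of an abelian differential on $X$. This contradicts $\xi$ being primitive, completing the proof.

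The main anticipated obstacle is justifying the middle step rigorously: I need to argue that the image of $L$ inside $\Omega^k\mathcal{M}_g(\mu)$ really has dimension $\dim_{\mathbb{C}} L$, i.e., that the cover map does not accidentally have positive-dimensional fibers. A secondary technicality is tracking marked points (zeros of order $0$) correctly in the Riemann--Hurwitz bookkeeping, since marked points on $X$ lying off the ramification locus must descend to marked points on $Y$ and pair up under the involution, which is consistent with the even-order case of \eqref{equation_RHintermediate} but requires care when formulating the stratum $\mu_Y$ of $(Y,\eta)$.
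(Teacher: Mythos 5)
Your proof is correct and follows essentially the same route as the paper: a dimension count comparing $\dim K = 2g+n-2$ with the dimension of the stratum of $(2k)$-differentials, using Riemann--Hurwitz for the degree-two intermediate cover branched exactly at the odd-order singularities. The only (harmless) difference is at the end: the paper pins the base dimension to $2g_Y+n_Y-2$ by noting the $(2k)$-differentials must be primitive (since the holonomy cover is connected), getting an immediate contradiction for positive genus, whereas you use the weaker bound $2g_Y+n_Y-1$ and eliminate the two leftover genus-one cases by observing $\eta = c\,(dz)^{2k}$ forces $\xi$ to be a $k$-th power, contradicting primitivity.
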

\begin{proof}
Suppose otherwise and that every surface is such a cover of a genus $h$ surface in the stratum $\Omega^{2k}\mathcal{M}_{h}(\nu)$. Then, the dimension of the corresponding component of $\Omega^{2k}\mathcal{M}_{h}(\nu)$ must be equal to the dimension of $K$. Because $(\hat{X},\hat{\omega})$ is connected, this component in $\Omega^{2k}\mathcal{M}_{h}(\nu)$ must consist of primitive $(2k)$-differentials, and we obtain \begin{equation} \label{equation_dimension}
2g+n-2 =2h+ m -2
\end{equation} where $m$ is the number of singularities of the stratum of $(2k)$-differentials. Let $m_1$ and $m_2$ be the number of even and odd order entries of $\nu$ respectively. The covering map from a surface in $K$ to a surface in $\Omega^{2k}\mathcal{M}_{h}(\nu)$ is only branched over singularities of odd order, and the ramification index at the pre-image is $2$. By the Riemann-Hurwitz formula, $$2g -2 = 2(2h-2) + m_2.$$ Therefore since $n = 2m_1 + m_2$, Equation (\ref{equation_dimension}) becomes $$2(2h-2) + m_2 + (2m_1 + m_2) = 2h+ m_1 + m_2 -2$$ which simplifies to $$2h + m_1 + m_2 = 2.$$ Because $\nu$ is empty only in the stratum $\Omega^{2k}\mathcal{M}_1(\emptyset)$ which parameterizes $(2k)$-th powers of abelian differentials, $\nu$ here is non-empty and $m_1 + m_2 >0$. When $h > 0$, this equality does not hold and we have a contradiction.
\end{proof}

Now we can show that $\mathcal{F(M)}$ is a component or a hyperelliptic locus and are ready to re-introduce marked points. A marked point $y$ on a surface in $\mathcal{M}$ is said to be $\mathcal{M}$\textit{-free} if $\mathcal{M}$ contains all surfaces obtained by moving $y$ while fixing the rest of the surface. 
   
\begin{proof}[Proof of Theorem \ref{theorem_holonomyorbit}]
By Lemma \ref{lemma_Marithmetic}, $\mathcal{F(M)}$ is always arithmetic when $g>2$. Together, Lemma \ref{lemma_Mhighrank} and Theorem \ref{theorem_highrank} imply $\mathcal{F}(\mathcal{M})$ is a component of a stratum or an unmarked locus of holonomy covers of surfaces in a stratum of half-translation surfaces. Lemma \ref{lemma_involutiondescends} implies if $\mathcal{F}(\mathcal{M})$ is the latter, then $K$ covers a stratum of $(2k)$-differentials. Furthermore, Lemma \ref{lemma_notdoublecovers} implies when $K$ is non-hyperelliptic, this cannot happen and $\mathcal{F}(\mathcal{M})$ is necessarily a non-hyperelliptic component of a stratum. By the main result of \cite{Api20}, all marked points on surfaces in $\mathcal{M}$ are $\mathcal{M}$-free. Thus, $\mathcal{M}$ is also a component of a stratum (with possibly marked points).
 
Consider the case where $K$ is hyperelliptic. For every surface $(X,\xi) \in K$ with a hyperelliptic involution $\iota$, we have the following commutative diagram \[ \begin{tikzcd}
(\hat{X},\hat{\omega}) \arrow{r} \arrow[swap]{d} & (\hat{Y},\hat{\eta}) \arrow{d} \\%
(X,\xi) \arrow{r}{/\iota}& (Y,\eta)
\end{tikzcd}
\] where $(Y,\eta) = (X,\xi)/\iota$ and $(\hat{Y},\hat{\eta})$ is its canonical intermediate $k$-cyclic cover and $\hat{\eta}$ thus a quadratic differential. Using Equation (\ref{equation_RHintermediate}) and the number theoretic conditions given in \cite[Theorem 1.1]{CG22}, we compute that $(\hat{Y},\hat{\eta})$ lives in the stratum
\begin{enumerate}
        \item $\Omega^2 \mathcal{M}_0(2m_1+k-2, 2m_2 + k -2, -1^{2gk})$ when $K$ is the hyperelliptic component of $\Omega^k \mathcal{M}_g(2m_1, 2m_2)$,
         \item $\Omega^2 \mathcal{M}_0(2m+k-2, 2\ell + 2k -2, -1^{2gk+k})$ when $K$ is the hyperelliptic component of $\Omega^k \mathcal{M}_g(2m, \ell, \ell)$,
         \item and $\Omega^2 \mathcal{M}_0(2\ell_1+2k-2, 2\ell_2 + 2k -2, -1^{2gk+2k})$ when $K$ is the hyperelliptic component of $\Omega^k \mathcal{M}_g(\ell_1, \ell_1, \ell_2, \ell_2)$.
            \end{enumerate}
Lemma \ref{lemma_uniquecover} implies $\mathcal{F}(\mathcal{M})$ must be the unmarked hyperelliptic locus over the stratum $\mathcal{Q}$ of $(\hat{Y},\hat{\eta})$. By the commutivity of the diagram and Equation (\ref{equation_RHintermediate}), the pre-images of poles on $(\hat{Y},\hat{\eta})$ are also the pre-images of regular Weierstrauss points on $(X,\xi)$ which we do not mark. Hence, all the marked points on $(\hat{X},\hat{\omega})$ are the pre-images of (regular) marked points on $(\hat{Y},\hat{\eta})$. Therefore, the marked points on surfaces in $\mathcal{N}$ must come in pairs interchanged by the holonomy involution. We see from the possible partitions of $\mathcal{Q}$ that at most one pair of points interchanged by the holonomy involution can be marked on surfaces in $\mathcal{N}$. 

If $\mathcal{M}$ is a proper subvariety of the full hyperelliptic locus $\tilde{\mathcal{Q}}$ over the stratum $\mathcal{Q}$, then points in this pair are $\mathcal{F}(\tilde{\mathcal{Q}})$-periodic points, i.e. the dimension of $\mathcal{F}(\tilde{\mathcal{Q}})$ after marking a point of the pair is the dimension of $\mathcal{F}(\tilde{\mathcal{Q}})$. By \cite[Theorem 1.4]{AW21}, there are no such points outside of Weierstrass points. Hence, $\mathcal{M}$ is the full locus $\tilde{\mathcal{Q}}$.
\end{proof}
\subsection{Low genus cases} \label{subsection_LGC}
Concerning the classification of $\mathcal{M}$, there are partial results in low genus. Recall when $(k,g) = (3,2)$, the dimension of $\mathcal{N}$ is also not always high enough to deduce high rank after Lemma \ref{lemma_eigenspacefullrank}. Hence, this case is omitted from the following Theorem. 

\begin{theorem} \label{theorem_LGC}
Suppose that $k>2$ is prime and $(k,g) \neq (3,2)$, and let $K$ be a component of $\Omega^k\mathcal{M}_g (\mu)^{\emph{prim}}$. When $g \leq 2$, almost every $(X,\xi) \in K$ lifts to a surface $(\hat{X},\hat{\omega}) \in \mathcal{H}_{\hat{g}}(\hat{\mu})$ whose $GL^+(2,\mathbb{R})$-orbit closure is either
    \begin{enumerate}
         \item a connected component of $\mathcal{H}_{\hat{g}}(\hat{\mu})$, 
        \item a hyperelliptic locus classified in Theorem \ref{theorem_holonomyorbit} (i),
        \item or non-arithmetic.
    \end{enumerate}
If $g =1$ and the positive entries of $\mu$ sum to be greater than $k$, then almost every $(X,\xi) \in K$ lifts to a surface $(\hat{X},\hat{\omega}) \in \mathcal{H}_{\hat{g}}(\hat{\mu})$ whose $GL^+(2,\mathbb{R})$-orbit closure is either
    \begin{enumerate}
         \item a connected component of $\mathcal{H}_{\hat{g}}(\hat{\mu})$
        \item or a hyperelliptic locus classified in Theorem \ref{theorem_holonomyorbit} (i).
    \end{enumerate} 
\end{theorem}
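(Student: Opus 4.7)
The plan is to mimic the structure of the proof of Theorem \ref{theorem_holonomyorbit}, bifurcating at the arithmeticity dichotomy, which now becomes nontrivial because Lemma \ref{lemma_Marithmetic} required $g > 2$. First I would ask whether $\mathcal{F(M)}$ is arithmetic. If it is not, then by Lemma \ref{lemma_NdenseinM} almost every surface in $\mathcal{N}$ has orbit closure exactly $\mathcal{M}$, whose image under $\mathcal{F}$ is non-arithmetic by assumption; this places us in case (iii).

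If $\mathcal{F(M)}$ is arithmetic, then since $(k,g) \neq (3,2)$, Lemma \ref{lemma_Mhighrank} applies and $\mathcal{M}$ is high rank. Theorem \ref{theorem_highrank} then forces $\mathcal{F(M)}$ to be either a connected component of a stratum or the unmarked locus of holonomy covers of some stratum of half-translation surfaces. In the first sub-case, I would invoke \cite{Api20} exactly as in the proof of Theorem \ref{theorem_holonomyorbit} to show that all marked points on surfaces in $\mathcal{M}$ are $\mathcal{M}$-free, so $\mathcal{M}$ itself is a full component; this gives (i). In the second sub-case, Lemma \ref{lemma_involutiondescends} forces every $(X,\xi) \in K$ to be a canonical intermediate $2$-cyclic cover of a $(2k)$-differential, and Lemma \ref{lemma_notdoublecovers} rules out the base having positive genus. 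The Chen-Gendron classification of hyperelliptic components \cite{CG22} then identifies $K$ with one of the three types in Theorem \ref{theorem_holonomyorbit} (ii); the identification of $\mathcal{F(M)}$ with the full unmarked hyperelliptic locus over the corresponding stratum $\mathcal{Q}$, and the subsequent promotion to $\mathcal{M}$ via Lemma \ref{lemma_uniquecover} and \cite[Theorem 1.4]{AW21}, can then be copied verbatim from the hyperelliptic portion of the proof of Theorem \ref{theorem_holonomyorbit}. This yields (ii).

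For the sharper statement in genus one under the hypothesis that the positive entries of $\mu$ sum to more than $k$, the plan is to rule out case (iii) directly by producing a Euclidean cylinder on some surface in $K$ and then applying Lemma \ref{lemma_simplecylinder} to conclude that $\mathcal{F(M)}$ is arithmetic. The existence of such a cylinder is the content of Lemma \ref{lemma_genus1cyl} referenced in the discussion after Lemma \ref{lemma_Marithmetic}: the strategy is to realize a surface in $K$ via the bubbling-a-handle operation applied to a genus zero surface (together with possibly breaking up zeros), which is possible under the stated numerical condition on $\mu$, and which introduces a Euclidean cylinder persisting into the interior of $K$. Once arithmeticity is in hand, the argument of the first part of the theorem eliminates (iii), leaving only (i) or (ii).

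The main obstacle is that in low genus the arithmeticity of $\mathcal{F(M)}$ is genuinely not automatic: the induction underlying Lemma \ref{lemma_Marithmetic} breaks down at $g = 2$ for the reason noted immediately after Lemma \ref{lemma_genus3}, and the genus zero counterexamples of Veech \cite{Vee89} and Apisa \cite{Api21} show that non-arithmetic orbit closures of holonomy covers really do occur in analogous settings. Consequently, absent a hypothesis on $\mu$ that forces a Euclidean cylinder, the non-arithmetic alternative cannot be excluded, and the trichotomy in the first part of the statement is essentially the best one can hope for with the present methods. The technical crux of the theorem's sharper genus one conclusion therefore lies entirely in the combinatorial/degeneration content of Lemma \ref{lemma_genus1cyl}.
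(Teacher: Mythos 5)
Your proposal is correct and follows essentially the same route as the paper: the paper's own (terse) proof likewise splits on whether $\mathcal{F(M)}$ is arithmetic, reruns the argument of Theorem \ref{theorem_holonomyorbit} verbatim in the arithmetic case (which is legitimate since Lemma \ref{lemma_Mhighrank} only needs $k>2$ prime, $(k,g)\neq(3,2)$, and arithmeticity), and invokes Lemma \ref{lemma_genus1cyl} to force arithmeticity in the genus one case with zero orders summing to more than $k$. Your sketch of Lemma \ref{lemma_genus1cyl} via bubbling a handle and breaking up a zero also matches the paper's proof of that lemma (the paper additionally tracks the rotation number to land in the prescribed component $K$), so there is nothing to correct.
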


Recall it is unknown whether the orbit closure of generic holonomy covers can be non-arithmetic in genus one and two, and non-arithmeticity does in fact happen in genus zero. 

In genus one, only when $\mu$ is empty can $\Omega^k\mathcal{M}_{1}(\mu)$ have (and only will have) differentials which are $k$-th powers of abelian differentials (since we do not consider $\mu$ to have higher order poles). Therefore, we can assume $\mu \neq \emptyset$ and take any component $K \subset \Omega^k\mathcal{M}_1(\mu)$ to parameterize primitive $k$-differentials. For any $k$, components of $\Omega^k\mathcal{M}_1(m_1,...,m_n)$ are classified by an invariant called the rotation number. Formally, it is defined as $$\text{rot}(X,\xi) := \text{\text{\text{gcd}}}(\text{Ind}(\alpha),\text{Ind}(\beta), m_1,...,m_n)$$ where $\alpha$ and $\beta$ are curves whose homology classes form a symplectic basis for $H^1(X;\mathbb{Z})$ and $\text{Ind}(\_)$ is the index of a curve. See \cite[Section 3.4]{CG22}.

In \cite[Theorem 3.12]{CG22}, it was proved that for any positive divisor $d$ of $\text{\text{\text{gcd}}}(m_1,...,m_n)$, there is a unique component of $\Omega^k\mathcal{M}_1(m_1,...,m_n)$ which realizes $d$ as its rotation number. The proof of Lemma \ref{lemma_genus1cyl} follows similarly to that of \cite[Theorem 3.12]{CG22} which includes higher order poles.

\begin{lemma} \label{lemma_genus1cyl}
Suppose that $g=1$ and the positive entries of $\mu$ sum up to be greater than $k$. Then, there is a surface in any component $K\subset \Omega^k\mathcal{M}_{1}(\mu)$ which has a Euclidean cylinder. In particular, $\mathcal{F(M)}$ is arithmetic when $k$ is prime.
\end{lemma}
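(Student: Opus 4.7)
The plan is to construct a surface in each component of $\Omega^k\mathcal{M}_1(\mu)$ explicitly, via the bubbling a handle operation starting from a genus zero $k$-differential, and then to invoke Lemma \ref{lemma_simplecylinder}. The key observation is that smoothing the horizontal node produced by bubbling a handle creates a Euclidean cylinder, exactly as was used in the last step of the proof of Lemma \ref{lemma_genus3}.

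First, let $Z = \sum_{m_i>0} m_i$ denote the sum of positive entries of $\mu$, and let the negative entries be $-p_1,\ldots,-p_r$. Since $g=1$, we have $Z = \sum p_j$. The hypothesis $Z>k$ implies $Z-2k>-k$, so $\Omega^k\mathcal{M}_0(Z-2k,-p_1,\ldots,-p_r)$ is a valid, non-empty stratum of genus zero $k$-differentials (the sum of orders is $-2k$, as required). Pick any surface $(X_0,\xi_0)$ in this stratum.

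Next, I would bubble a handle at the unique zero of $(X_0,\xi_0)$, following the construction in \cite[Section 3.3]{CG22}. This produces a stable $k$-differential with a horizontal node whose smoothing yields a genus one surface $(X_1,\xi_1)\in\Omega^k\mathcal{M}_1(Z,-p_1,\ldots,-p_r)$ containing a Euclidean cylinder $C$ (the smoothed node). Then I would apply the break-up-zeros operation from \cite[Section 3]{CG22} to split the zero of order $Z$ into zeros of orders $m_1,\ldots,m_n$. Because this operation is supported in an arbitrarily small neighborhood of the zero, it does not interact with $C$, which persists as a Euclidean cylinder on the resulting surface in $\Omega^k\mathcal{M}_1(\mu)$.

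The main obstacle is showing that this procedure reaches \emph{every} component of $\Omega^k\mathcal{M}_1(\mu)$, not just some. Components in genus one are classified by the rotation number, which ranges over the positive divisors of $\gcd(m_1,\ldots,m_n)$ by \cite[Theorem 3.12]{CG22}. The rotation number of the bubbled surface depends on the prong-matching and the homological index of the core curve of the bubbled handle, and by varying this choice one can realize each admissible rotation number. This is precisely the content of the proof of \cite[Theorem 3.12]{CG22}, which I would adapt to conclude that an appropriate choice of genus zero starting surface, bubbling index, and subsequent zero splitting lands in the prescribed component $K$. Once a Euclidean cylinder has been produced on some surface in $K$, Lemma \ref{lemma_simplecylinder} immediately yields arithmeticity of $\mathcal{F}(\mathcal{M})$ when $k$ is prime.
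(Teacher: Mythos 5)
Your proposal is essentially the paper's proof: both start from a surface in $\Omega^k\mathcal{M}_0(Z-2k,-p_1,\ldots,-p_r)$ (non-empty since $Z>k$ gives $Z-2k>-k$), bubble a handle to acquire a Euclidean cylinder from the smoothed horizontal node, break up the resulting zero into zeros of orders $m_1,\ldots,m_r$ while keeping the cylinder, use the rotation-number machinery of \cite{CG22} (Proposition 3.7 / Theorem 3.12) to hit every component, and conclude with Lemma \ref{lemma_simplecylinder}. The only difference is one of detail: the paper makes the component bookkeeping explicit by noting that the indices of the symplectic basis $(\alpha,\beta)$ are unchanged when the zero is broken up, whereas you defer this to "adapting" the proof of \cite[Theorem 3.12]{CG22}, which is exactly what the paper does.
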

\begin{proof}
Suppose that $m_1,...,m_r$ are the orders (including multiplicities) of all the zeros and $m_{r+1},...,m_n$ of all the poles of $k$-differentials in $\Omega^k\mathcal{M}_{1}(\mu)$. Consider a connected component $K \subset \Omega^k\mathcal{M}_{1}(\mu)$ whose rotation number is $d$ and set $m = m_1+...+m_r$. By \cite[Proposition 3.7]{CG22}, we can perform the bubbling a handle operation on a surface in the stratum $\Omega^k\mathcal{M}_0(m-2k,m_{r+1},...,m_n)$ to obtain a genus one surface $(X',\xi')$ with rotation number $d$ in $\Omega^k\mathcal{M}_{1}(m,m_{r+1},...,m_n)$. Because the positive entries of $\mu$ sum up to be greater than $k$, $m$ satisfies $m -2k >-k$ (which is required to bubble a handle at that singularity). The genus one surface $(X',\xi')$ acquires a Euclidean cylinder from smoothing out the horizontal node. 

Call the bubbled cylinder core cure $\alpha$ and let $\beta$ be the curve that runs through $\alpha$ and turns around the unique zero. The pair $(\alpha,\beta)$ forms a symplectic basis for $H_1(X';\mathbb{Z})$. We then break up the zero of order $m$ into $r$ zeros of orders $m_1,...,m_r$ while using Lemma \ref{lemma_cylpersists} to preserve $\alpha$ as a core curve. Moreover, the indices of $\alpha$ and $\beta$ remain unchanged, and hence the ambient component is of rotation number $d$. Therefore, we land into the component $K$. In particular, Lemma \ref{lemma_simplecylinder} implies $\mathcal{F(M)}$ is arithmetic.
\end{proof}

\begin{proof}[Proof of Theorem \ref{theorem_LGC}]
One can check after assuming $\mathcal{F(M)}$ is arithmetic, the proof follows exactly as the proof of Theorem \ref{theorem_holonomyorbit}. Furthermore, when $g=1$ and the sum of the zeros are greater than $k$, Lemma \ref{lemma_genus1cyl} implies $\mathcal{F(M)}$ is arithmetic. 
\end{proof}

Because it remains an open question whether $\mathcal{F(M)}$ is non-arithmetic when $g\leq2$, we cannot generically determine the weak asymptotics of counting functions on low genus surfaces in the upcoming section. 

\section{Asymptotics of counting functions} \label{theorems}

In this section, we prove Theorem \ref{theorem_main} and talk about Siegel-Veech constants across different components of $\Omega^k\mathcal{M}_g(\mu)^\text{prim}$. Theorem \ref{theorem_main} follows quickly from Theorems \ref{weak} and \ref{theorem_holonomyorbit}.

\begin{proof} [Proof of Theorem \ref{theorem_main}] Theorem \ref{theorem_holonomyorbit} says almost every $M \in K$ has a holonomy cover $\hat{M}$ whose $GL^+(2,\mathbb{R})$-orbit closure is the ambient component $\hat{K}$ in $\mathcal{H}_{\hat{g}}(\hat{\mu})$ when $K$ is non-hyperelliptic or the ambient hyperelliptic locus when $K$ is hyperelliptic. Theorem \ref{weak} implies that the weak asymptotics of $N_{cyl}(\hat{M},L)$ and $N_{sc}(\hat{M},L)$ are given by $$\lim_{L \to \infty}\frac{1}{L}\int^{L}_{0} N_{cyl}(\hat{M},e^t)e^{-2t}dt = \frac{c \cdot \pi}{\text{Area}(\hat{M})}$$ $$\lim_{L \to \infty}\frac{1}{L}\int^{L}_{0} N_{sc}(\hat{M},e^t)e^{-2t}dt = \frac{s \cdot \pi}{\text{Area}(\hat{M})}$$ where the constants $c$ and $s$ depend on the $GL^+(2,\mathbb{R})$-orbit closure of $\hat{M}$.  Therefore, we can almost always take $c$ and $s$ to be $\hat{c}_{cyl}$ and $\hat{c}_{sc}$ respectively which are the Siegel-Veech constants associated to $\hat{K}$ or the ambient hyperelliptic locus therein. By (\ref{equation_count}) and (\ref{equation_area}), we can replace $N_{cyl}(\hat{M},L)$ and $N_{sc}(\hat{M},L)$ with $k \cdot N_{cyl}(M,L)$ and $k \cdot N_{sc}(M,L)$ respectively and $\text{Area}(\hat{M})$ with $k \cdot \text{Area}(M)$ in the equations above. This immediately yields $$N_{cyl}(M,L) `` \sim ” \frac{\hat{c}_{cyl} \cdot \pi L^2}{k^2 \cdot \text{Area}(M)} \hspace{35pt} N_{sc}(M,L) `` \sim ” \frac{\hat{c}_{sc} \cdot \pi L^2}{k^2 \cdot \text{Area}(M)}.$$
\end{proof}

\subsection{Computing Siegel-Veech constants for hyperelliptic components} 
There are many strata which contain holonomy covers of non-hyperelliptic prime-order $k$-differentials. The Siegel-Veech constants for $cyl$ or $sc$ cannot be nicely formulated for arbitrary components of strata of translation surfaces. To compute them, one would first consider every possible configuration of cylinders (resp. saddle connections) that would appear on a surface in that stratum. Then, one would compute the Siegel-Veech constants associated to these configurations using the derived formulas and techniques in \cite{EMZ03}. Afterwards, we sum up these Siegel-Veech constants to obtain the Siegel-Veech constant for $cyl$ (resp. $sc$). 

When $K$ is hyperelliptic however, the ambient hyperelliptic locus of the holonomy covers $\mathcal{N}$ in Theorem \ref{theorem_main} are double covers of a stratum  $\Omega^2\mathcal{M}_0(n_1,n_2,-1^{n_1+n_2 +4})$ for some $n_1,n_2 \geq 0$ (see Theorem \ref{theorem_holonomyorbit}). Only for the hyperelliptic components of the strata $\Omega^k\mathcal{M}_g(2m,-k+1,-k+1)$ and $\Omega^k\mathcal{M}_g(\ell,\ell,-k+1,-k+1)$ does some $n_i =0$. Using the simplicity of this stratum, Siegel-Veech formulas from Athreya-Eskin-Zorich \cite{AEZ16}, and furthermore Apisa \cite{Api21}, the Siegel-Veech constant for $cyl$ is formulated for general $n_1,n_2 > 0$. We now summarize \cite[Section 8]{Api21}.

A cylinder on a half-translation surface is called a \textit{simple cylinder} if each of its boundary components is a saddle connection. A cylinder is called an \textit{envelope} if one of its boundary components is a saddle connection of multiplicity two and the other a single saddle connection. Let $c_{simp}$ and $c_{env}$ be the Siegel-Veech constants for the configuration of any simple cylinder and any envelope respectively. Then, Apisa \cite[Corollary 8.3]{Api21} using the Siegel-Veech constants for Configurations III and IV of Athreya-Eskin-Zorich \cite{AEZ16} showed that for the stratum $\Omega^2\mathcal{M}_0(n_1,n_2,-1^{n_1+n_2 +4})$ with $n_1,n_2 >0$, $$ c_{simp} = \frac{1}{2\pi^2}\binom{n_1+n_2+4}{2}\frac{2}{(n_1+2)(n_2+2)}$$
$$c_{env} = \frac{1}{2\pi^2} \binom{n_1+n_2+4}{2}.$$ 

We here explain the proof of \cite[Theorem 8.4]{Api21}. On a full measure set in a genus zero stratum of quadratic differentials other than $\Omega^2\mathcal{M}_0(-1^4)$, every cylinder is either a simple cylinder or an envelope (see \cite{MZ08} or \cite[Section 4.1]{AW24}). Let $(S,\omega)$ be a hyperelliptic surface which is a double cover of a surface in this full measure set in $\Omega^2\mathcal{M}_0(n_1,n_2,-1^{n_1+n_2 +4})$. Suppose $\iota$ is its hyperelliptic involution. Since the pre-images of poles are unmarked on $(S,\omega)$, simple cylinders on $(S,\omega)/\iota$ have two cylinders in the pre-image on $(S,\omega)$ and envelopes have one. Hence, $$\hat{c}_{cyl} = 2 c_{simp} +  c_{env}$$ where $\hat{c}_{cyl}$ is the Siegel-Veech constant counting cylinders for the hyperelliptic locus covering $\Omega^2\mathcal{M}_0(n_1,n_2,-1^{n_1+n_2 +4})$. We then plug this constant into Theorem \ref{theorem_main}. 

Using the classification of the hyperelliptic locus in Theorem \ref{theorem_holonomyorbit} and \cite[Theorem 8.4]{Api21}, we have for a generic surface $M \in K$ without a pole of order $k-1$,

\vspace*{5px}
$\begin{aligned}
    N_{cyl}(M,L)``\sim"
\end{aligned}$
$$\frac{1}{2\pi^2} \binom{2m_1 +2m_2 +2k}{2} \left( 1 + \frac{4}{(2m_1 +k)(2m_2 +k)}\right) \frac{2\pi L^2}{k^2 \cdot \mathrm{Area}(M)}$$ when $K \subset \Omega^k\mathcal{M}_g(2m_1,2m_2)$,
$$\frac{1}{2\pi^2} \binom{2m +2\ell +3k}{2} \left( 1 + \frac{4}{(2m +k)(2\ell +2k)}\right) \frac{2\pi L^2}{k^2 \cdot \mathrm{Area}(M)}$$
when $K \subset \Omega^k\mathcal{M}_g(2m,\ell,\ell)$, and
$$\frac{1}{2\pi^2} \binom{2\ell_1 +2\ell_2 +4k}{2} \left( 1 + \frac{4}{(2\ell_1 +2k)(2\ell_2 +2k)}\right) \frac{2\pi L^2}{k^2 \cdot \mathrm{Area}(M)}$$ when $K \subset \Omega^k\mathcal{M}_g(\ell_1,\ell_1,\ell_2,\ell_2)$.

In contrast to $\hat{c}_{cyl}$, there is not a nice general formula for $\hat{c}_{sc}$ even for the hyperelliptic loci we consider. However, the method for computing $\hat{c}_{sc}$ (or $\hat{c}_{cyl}$ with a pole of order $k-1$) is the same as above in that we categorize the configurations of saddle connections (resp. cylinders) on surfaces in $\Omega^2\mathcal{M}_0(n_1,n_2,-1^{n_1+n_2 +4})$ based on the number of saddle connections (resp. cylinders) in their pre-image on the hyperelliptic surface. The pre-image of a saddle connection on a surface in $\Omega^2\mathcal{M}_0(n_1,n_2,-1^{n_1+n_2 +4})$ has zero saddle connections when it connects two poles, one saddle connection when it connects a pole to a zero, or two saddle connections when it connects two (not necessarily distinct) zeros. One then uses \cite{AEZ16} to obtain Siegel-Veech constants for each of the three categories and takes their sum weighting each term accordingly by $0,1,$ or $2$.

\subsection{Parity of non-hyperelliptic components} At large, we see that constants $\hat{c}_{cyl}$ and $\hat{c}_{sc}$ depend on the ambient component of $\mathcal{N}$ in $\mathcal{H}_{\hat{g}}(\hat{\mu})$, which depends on the component $K$. In \cite{KZ03}, Kontsevich-Zorich classified components of strata of translation surfaces by hyperellipticity and parity of spin structure. Given a symplectic basis $(\alpha_1,...,\alpha_g,\beta_1,...,\beta_g)$ of $H_1(X;\mathbb{Z}/2)$, the parity of a translation surface $(S,\omega)$ is defined as the parity of the Arf-invariant $$\Phi(\omega) :=  \sum_{i=1}^g(\text{Ind}(\alpha_i) + 1)(\text{Ind}(\beta_i) + 1) \text{ mod } 2$$ where $\text{Ind}$ is with respect to $\omega$. Parity is an invariant of a component of a stratum. Two components of a stratum can have different parity type only when the singularities are all of even order. The parity of a component of a stratum of $(1/k)$-translation surfaces is defined as the parity of its holonomy covers.

There is not a complete classification of components of strata $\Omega^k\mathcal{M}_g(\mu)$, but Chen-Gendron \cite{CG22} partially classified components based on hyperellipticity and parity. When $k$ is even, they show the parity is an invariant of the locus of primitive $k$-differentials. When $k$ is odd, they show strata may have two components of different parity if they lift to strata of only even singularities, and the locus of differentials with the same parity may be disconnected. The 2-adic valuation of $k$ is the highest exponent $v_2(k)$ such that $2^{v_2(k)}$ divides $k$. A small computation shows $\mu$ has only even entries if and only if the 2-adic valuation of every entry of $\mu$ is not equal to $v_2(k)$. Otherwise, all primitive non-hyperelliptic components of a stratum will share the same asymptotic in Theorem \ref{theorem_main}.

\bibliography{references}{}
\bibliographystyle{amsalpha.bst}

\end{document}